\newtheorem{lemma}{Lemma}[section]
\newtheorem{theorem}[lemma]{Theorem}
\newtheorem{remark}[lemma]{Remark}
\newtheorem{prop}[lemma]{Proposition}
\newtheorem{definition}[lemma]{Definition}
\def\N{{\mathbb N}}
\def\R{{\mathbb R}}
\def\C{{\mathbb C}}
\def\P{{\mathbb P}}
\def\E{{\mathbb E}}
\def\mbE{{\mathbb E}}
\def\mbP{{\mathbb P}}
\def\T{{\mathbb T}}
\def\mcL{{\mathcal L}}
\def\veps{\varepsilon}
\def\la{\langle}
\def\ra{\rangle}
\def\d{{\rm d}}
\numberwithin{equation}{section}
\title[Averaging principle for stochastic CGL equations]
{Averaging principle for stochastic complex
Ginzburg-Landau equations}
\author{mengyu Cheng}
\address{M. Cheng: School of Mathematical Sciences,
Dalian University of Technology, Dalian 116024, P. R. China;
Fakultat f\"ur Mathematik,
Universit\"at Bielefeld, D-33501 Bielefeld, Germany}
\email{mengyu.cheng@hotmail.com;
mengyucheng@mail.dlut.edu.cn}
\author{zhenxin Liu}
\address{Z. Liu:
School of Mathematical Sciences,
Dalian University of Technology, Dalian 116024, P. R. China}
\email{zxliu@dlut.edu.cn}
\author{Michael R\"ockner}
\address{M. R\"ockner: Fakultat f\"ur Mathematik,
Universit\"at Bielefeld, D-33501 Bielefeld, Germany}
\email{roeckner@math.uni-bielefeld.de}
\date{October 10, 2022}
\subjclass[2010]{35Q56, 60H15, 37B20}
\keywords{Stochastic complex Ginzburg-Landau equation; Averaging principle;
First Bogolyubov theorem; Second Bogolyubov theorem; Global averaging principle;
Measure attractor.}
\begin{document}

\begin{abstract}
Averaging principle is an effective method for investigating
dynamical systems with highly oscillating components.
In this paper, we study three types of averaging principle
for stochastic complex Ginzburg-Landau equations.
Firstly, we prove that the solution of the original equation
converges to that of the averaged equation on finite intervals
as the time scale $\veps$ goes to zero
when the initial data are the same.
Secondly, we show that there exists a unique recurrent solution
(in particular, periodic, almost periodic, almost automorphic,
etc.) to the original equation in a neighborhood of the stationary
solution of the averaged equation when the time scale is small.
Finally, we establish the global averaging
principle in weak sense, i.e. we show that the
attractor of original system tends to that of the averaged
equation in probability measure space as $\veps$
goes to zero.
\end{abstract}

\maketitle

\section{Introduction}

In this paper, we consider the following stochastic complex  Ginzburg-Landau
(in short, CGL) equation with highly oscillating components
on the $d$-torus $\T^{d}, d=1,2,3$
\begin{equation}\label{in:CGL}
\d u^{\veps}(t)=\left[(1+i\alpha )\Delta u^{\veps}(t)
-(\gamma(t/\veps)+i\beta)|u^{\veps}(t)|^{2}u^{\veps}(t)
+f\left(t/\veps,u^{\veps}(t)\right)\right]\d t
+g\left(t/\veps,u^{\veps}(t)\right)\d W(t),
\end{equation}
where $\alpha,\beta\in\R$,
$\veps$ is a small parameter and $W(t),t\in\R$ is a two-sided cylindrical
Wiener process with the identity covariance operator on a separable
Hilbert space $\left(U,\la~,~\ra_U\right)$.
Here $\gamma$, $f$ and $g$ satisfy some suitable conditions.

We note that $\gamma(\frac{t}{\veps})$, $f(\cdot,\frac{t}{\veps})$ and $g(\cdot,\frac{t}{\veps})$
are fast time oscillating forces that depend on the solution.
Usually, studying the original system \eqref{in:CGL} is relatively difficult,
since there are two widely separated timescales.
It is well-known that a highly oscillating system may be ``averaged''
under some suitable conditions, and the resulting averaged system
is easier to analyze and governs the evolution of the original system
over long time scales. This is the basic idea of averaging principle.
According to the connotation of approximations,
there are three types of interpretation for averaging
principle, i.e. the so-called {\em first Bogolyubov theorem},
{\em second Bogolyubov theorem} and
{\em global averaging principle}.

More specifically,
the first Bogolyubov theorem requires that the solution of
the original equation \eqref{in:CGL} converges,
as $\veps\rightarrow0$, to that of
the averaged equation
\begin{equation}\label{in:aCGL}
\d \bar{u}(t)=\left[(1+i\alpha )\Delta \bar{u}(t)-(\bar{\gamma}+i\beta)|\bar{u}(t)|^{2}\bar{u}(t)
+\bar{f}(\bar{u}(t))\right]\d t+\bar{g}\left(\bar{u}(t)\right)\d W(t)
\end{equation}
on finite time intervals when $u^{\veps}(0)=\bar{u}(0)$.
And the second Bogolyubov theorem states that
the stationary solution of \eqref{in:aCGL}
approximates the periodic solution of \eqref{in:CGL}; that is to say,
the approximation is valid on the entire real axis.
So sometimes it is called {\em theorem for periodic solution by averaging}.
In addition, the global averaging principle describes that
the attractor of \eqref{in:CGL} tends to that of the averaged
equation \eqref{in:aCGL} as $\veps$ goes to zero.

The theory of averaging has been applied in many fields,
such as celestial mechanics, oscillation theory and radiophysics.
And the idea of averaging dates from the perturbation theory
which was developed by Clairaut, Laplace and Lagrange in the 18th
century. Then a fairly rigorous averaging method for nonlinear
oscillations was established by Krylov, Bogolyubov and
Mitropolsky \cite{KB1943, BM1961}, which is called the
Krylov-Bogolyubov method nowadays. After that,
averaging principle for finite and infinite dimensional
deterministic systems was studied by many authors which
we will not mention here.

Meanwhile, Stratonovich firstly proposed the stochastic averaging
method based on physical considerations. Then a mathematical proof
was given by Khasminskii \cite{Khas1968}. Following Khasminskii's work,
extensive works concerning averaging principle for
finite and infinite dimensional stochastic differential equations
were conducted; see e.g. \cite{BK2004, Cerr2009, CF2009,
CL2017, DW2014, FW2006, FW2012, Gao2022, HL2020, Kifer2004,  LRSX2020,
MSV1991, RX2021, Skor1989, SXX2021, Vere1990, Vere1999, Vrko1995, WR2012} and
references therein. It should be pointed out that the above existing results
focus on the first Bogolyubov theorem.

To the best of our knowledge, there are few works on
averaging principle for stochastic CGL equations.
As discussed in \cite{Gao2021, HKM2015, Kuk2013},
averaging method was developed to describe the behavior
of solutions for small oscillations
in damped/driven Hamiltonian systems.

The complex Ginzburg-Landau equation arises in physics. Therefore, it
has a very rich physical background and connotations. It can be used
to describe problems of Bandard convection, Taylor-Couette flow,
plane Poiseuille flow, and chemical turbulence. It has also been
applied in superfluidity and superconductivity theory (see e.g.
\cite{AK2002} for more information).

As we know, some perturbations may be neglected in the derivation of
the ideal model. When considering the perturbation of each microscopic
unit of the model, which will lead to a very large complex system,
we usually represent micro effects by random perturbations in
the dynamics of macro observables. Thus, it is more realistic to
consider stochastic CGL equations.


From the perspective of theoretical and practical value, we establish
three types of averaging principle for the stochastic CGL
equation with highly oscillating components
in this paper following \cite{CLAM2021}. Firstly, under some suitable
conditions, employing the technique of time discretization
which is also used in
\cite{Cerr2009, CF2009, CL2017, LRSX2020}, we show that
\begin{equation}\label{FBTeq}
\lim\limits_{\veps\rightarrow0} \mathbb E\left(
\sup_{s\leq t\leq s+T}\|u_{\veps}(t,s,\zeta^{\veps}_{s})
-\bar{u}(t,s,\zeta_{s})\|_{L^2(\T^d)}^{2}\right)=0
\end{equation}
for all $s\in\R$ and $T>0$ provided
$\lim\limits_{\veps\rightarrow0} \mathbb E
\|\zeta^{\veps}_{s}-\zeta_{s}\|_{L^2(\T^d)}^{2}=0$,
where $u_{\veps}$ is the solution of \eqref{in:CGL}
with the initial condition
$u_{\veps}(s,s,\zeta^{\veps}_{s})=\zeta^{\veps}_{s}$ and $\bar{u}$ is
the solution of the averaged equation \eqref{in:aCGL}
with the initial condition $\bar{u}(s,s,\zeta_{s})=\zeta_{s}$.
Here $\bar{\gamma}$, $\bar{f}$ and $\bar{g}$ satisfy
\begin{equation}\label{KBMgam}
\left|\frac{1}{T}\int_{t}^{t+T}\gamma(s)\d s-\bar{\gamma}\right|\leq\delta_\gamma(T),
\end{equation}
\begin{equation}\label{KBMf}
\left\|\frac{1}{T}\int_{t}^{t+T}f(s,x)\d s-\bar{f}(x)\right\|_{L^2(\T^d)}
\leq\delta_{f}(T)(1+\|x\|_{L^2(\T^d)})
\end{equation}
and
\begin{equation*}\label{KBMg}
\frac{1}{T}\int_{t}^{t+T}\left\|g(s,x)-\bar{g}(x)\right\|_{L_{2}(U,L^{2}(\T^{d}))}^{2}\d s
\leq\delta_{g}(T)(1+\|x\|^2_{L^2(\T^d)})
\end{equation*}
for all $t\in\R$ and $x\in L^2(\T^d)$, where
$\delta_\gamma(T)\rightarrow0$, $\delta_{f}(T)\rightarrow0$ and
$\delta_{g}(T)\rightarrow0$ as $T\rightarrow\infty$.
We write $L_{2}(U,H)$ to mean the space of Hilbert-Schmidt operators from
$U$ to $H$. Notice that this is the first Bogolyubov theorem; see Theorem \ref{avethf}.

It can be verified that \eqref{KBMf} implies
\begin{equation*}
  \bar{f}(x)=\lim_{T\rightarrow\infty}\frac{1}{T}\int_t^{t+T}
  f(s,x)\d s
\end{equation*}
uniformly for all $t\in\R$ and $x$ in any bounded subset
of $L^2(\T^d)$.
Such $f$ is called a {\em KBM-vector field} (KBM
stands for Krylov, Bogolyubov and Mitropolsky);
see e.g. \cite{SVM2007}.
We note that $\gamma$ in the cubic term of \eqref{in:CGL} depends on time $t$, which is more general and cannot be covered by the framework in \cite{CLAM2021}
because there it is assumed that the coefficients involved in the averaging principle
are globally Lipschitz continuous.
We employ the interpolation inequality and stopping time techniques to
establish the first Bogolyubov theorem.
Therefore, the method is very different from \cite{CLAM2021}.


After that, we establish the second Bogolyubov theorem for stochastic CGL equations.
To be specific, we firstly show that there exists a unique
$\mcL^{2}(\Omega,\mathbb P;L^{2}(\T^{d}))$-bounded solution
$u^{\veps}(t),t\in\R$ of \eqref{in:CGL} which
inherits the recurrent properties (in particular, periodic,
quasi-periodic, almost periodic, almost automorphic, etc.) of the coefficients
in distribution sense for each $0<\veps\leq1$.
This result is interesting in its own right,
because recurrence is an important concept in dynamical systems,
which roughly means that a motion returns infinitely often to any
small neighborhood of the initial position.
We also prove that the
$\mcL^{2}(\Omega,\mathbb P;L^{2}(\T^{d}))$-bounded solution
$u^{\veps}(t),t\in\R$ to \eqref{in:CGL} is globally
asymptotically stable in square-mean sense. Then we obtain
\begin{equation}\label{result1}
\lim_{\veps\rightarrow0}\sup_{s\leq t\leq s+T}W_{2}(\mathscr{L}(u^{\veps}(t)),\mathscr{L}(\bar{u}(0)))=0
\end{equation}
for all $s\in\R$ and $T>0$ (see Theorem \ref{averth}), where $\bar{u}$ is the unique stationary
solution of \eqref{in:aCGL} and $\mathscr L(\bullet)$ is the
distribution of $\bullet$. Here $W_{2}$ is the Wasserstein
distance.

Little work has been done on the second Bogolyubov
theorem for stochastic differential equations.
For this purpose, recall that the second Bogolyubov theorem for
stochastic ordinary differential equations was studied
in \cite{CL2020}. Compared to \cite{KMF2015},
we consider stochastic CGL equations that admit polynomial growth terms.
Despite that a general second Bogolyubov theorem
was established in \cite{CLAM2021}, it
seems difficult to apply directly to stochastic CGL equations.
Indeed, in order to obtain the recurrent solution and the second Bogolyubov theorem,
we need the tightness of distributions of the bounded solution.
To this end, a condition (denoted by (H6) in \cite{CLAM2021}) was introduced to obtain that the $\mcL^2(\Omega,\mathbb P;H)$-bounded solution is also
$\mcL^2(\Omega,\mathbb P;S)$-bounded in \cite{CLAM2021},
where $S\subset H$ is compact and $V\subset H\subset V^*$ is a Gelfand triple.
However, when we consider stochastic CGL equations,
(H6) is too strong and hard to verify.
In this paper, to bypass (H6), we use the Galerkin method to
obtain that the $\mcL^2(\Omega,\mathbb P;L^2(\T^d))$-bounded solution of the stochastic CGL equation
is indeed $\mcL^2(\Omega,\mathbb P;H^1(\T^d))$-bounded,
which implies the tightness of distributions of this bounded solution
by the compact embedding of
$H_0^1(\T^d)\subset L^2(\T^d)$.

Our another aim of this paper is to establish the global averaging
principle in weak sense for stochastic CGL equations.
The global averaging principle was investigated for deterministic systems;
see e.g. \cite{HV1990, Ily1996, Ily1998, Zel2006} among others.
But to our knowledge, there is only one work on the global averaging
principle for stochastic equations, i.e. \cite{CLAM2021}.

Let $u^{\veps}(t,s,x)$ be the solution of \eqref{in:CGL}
with the initial data $u^{\veps}(s,s,x)=x$
for all $0<\veps\leq1$. Then it generates a Markov transition probability
\[
P_{\veps}(s,x,t,\d y):=\mathbb P\circ\left(u^{\veps}(t,s,x)\right)^{-1}(\d y).
\]
And it acts on the space of probability measures $Pr(L^{2}(\T^{d}))$ by
\[
P_{\veps}^{*}(t,F,\mu)(B):=
\int_{L^{2}(\T^{d})}P_{\veps}
(0,x,t,B)\mu(\d x)
\]
for all $\mu\in Pr(L^{2}(\T^{d}))$ and
$B\in\mathcal B(L^{2}(\T^{d}))$, where $F:=(\gamma,f,g)$ is
as in \eqref{in:CGL}, and $Pr(L^2(\T^d))$ is the space of probability measures on $L^2(\T^d)$.
By the method in \cite{CLAM2021}, we can prove that $P_{\veps}^{*}$ is a cocycle
over $(\mathcal H(F_{\veps}),\R,\sigma)$ with fiber
$Pr_{2}(L^{2}(\T^{d}))$ for any $0<\veps\leq1$, where
$(\mathcal H(F),\R,\sigma)$ is a shift dynamical system
(see Appendix \ref{shiftDS} for details),
$F_{\veps}:=(\gamma(\cdot/\veps),f(\cdot/\veps,\cdot),g(\cdot/\veps,\cdot))$ and
$$
Pr_{2}(L^{2}(\T^{d})):=\left\{\mu\in
Pr(L^{2}(\T^{d})):\int_{L^{2}(\T^{d})}\|z\|^{2}
\mu(\d z)<\infty\right\}.
$$
Finally, we show that $P_{\veps}^{*}$ has a uniform attractor
$\mathcal{A}^{\veps}$ in $Pr_{2}(L^{2}(\T^{d}))$
for any $0<\veps\leq1$, and
\[
 \lim_{\veps\rightarrow0}{\rm dist}_{Pr_{2}(L^{2}
 (\T^{d}))}\left(
 \mathcal{A}^{\veps},\bar{\mathcal{A}}\right)=0
\]
provided $\mathcal H(F)$ is compact (see Theorem \ref{gath}), where
${\rm dist}_{Pr_{2}(L^{2}(\T^{d}))}$ is the Hausdorff
semi-metric and $\bar{\mathcal{A}}:=\{\mathscr L(\bar{u}(0))\}$ is the
attractor of $\bar{P}^{*}$ to the averaged equation \eqref{in:aCGL}.
Note that $\mathcal H(F)$ is compact provided $F$
is Birkhoff recurrent.

We point out that there is an additional assumption on high regularity of initial data
to study the first Bogolyubov theorem due to the cubic term involved in averaging (see Theorem \ref{avethf}).
We also note that the first Bogolyubov theorem plays an important role in establishing
the second Bogolyubov theorem and the global averaging principle. As mentioned in \cite{CLAM2021},
the required high regularity of the bounded solution could not be obtained
under the general monotone framework considered there.
In the case of stochastic CGL equations, we prove that the $\mcL^2(\Omega,\mathbb P;L^2(\T^d))$-bounded solution
is actually $\mcL^{2p}(\Omega,\mathbb P;H_0^1(\T^d))\cap\mcL^2(\Omega,\mathbb P;H_0^2(\T^d))$-bounded for some $p>1$;
this regularity is high enough to establish the second Bogolyubov theorem and global averaging principle.
This is another main novelty of this paper.

Now we introduce the structure of the paper.
In section \ref{Wellp}, we give the well-posedness of
stochastic CGL equation.
In section \ref{firstBT}, we study the first Bogolyubov theorem
for stochastic CGL equations.
In section \ref{secondBT}, we firstly prove that there exists a unique
$\mcL^{2}(\Omega,\P;L^{2}(\T^{d}))$-bounded solution
which possesses the same recurrent properties as the coefficients
in distribution sense and this bounded solution is globally
asymptotically stable in square-mean sense. Then we establish the
second Bogolyubov theorem for stochastic CGL equations.
In section \ref{globalAP}, we prove the global averaging principle for
stochastic CGL equations.
In the Appendix at the end, we recall some definitions of dynamical systems
and some spaces.

\vspace{5mm}
{\bf Notations.}
Throughout this paper, we write $L^{p}(\T^{d};\C),p\geq2$  to mean
the space of all Lebesgue $p$-integrable complex-valued
functions on $\T^{d}, d=1,2,3$.
We view $L^{2}(\T^{d})$ as a real Hilbert space with the inner product
$\langle u,v\rangle:=\langle u,v\rangle_{L^{2}(\T^{d};\C)}
=\mathcal R\int_{\T^{d}}u(\xi)\bar{v}(\xi)\d\xi$, which induce the norm
$\|u\|=\langle u,u\rangle^{\frac{1}{2}}$.
Here $\bar{v}$ is the conjugate of $v$ and
$\mathcal Rv$ is the real part of $v$. Denote by
$H^{m}:=W^{m,2}(\T^{d};\C),m\in\N_+$ the
Sobolev space of complex-valued functions on $\T^{d}$.
Set
\[
H_0^m:=\{u\in H^m: \int_{\T^d}u(\xi)\d\xi=0\}
\]
equipped with the inner product
\[
\la u,v\ra_{m}:=\la(-\Delta)^mu,v\ra
\]
for all $m\geq0$, which induce the norm $\|u\|_m=\la(-\Delta)^m u,u\ra_m^{\frac12}$,
where $H_0^0:=\{u\in L^2(\T^d): \int_{\T^d}u(\xi)\d\xi=0\}$.
In this paper, we write $L^p(\T^d),p\geq2$ to mean $L^p(\T^d;\C)\cap H_0^0$
for simplicity.
Let $\lambda_{*}$ be the first eigenvalue of $-\Delta$
on $L^{2}(\T^{d})$.
Denote by $\{e_{i},i\in\mathbb N\}\subset H_0^{m}$ the eigenfunctions of
$-\Delta$ forming an orthonormal basis of $L^{2}(\T^{d})$ and
$H_{n}:={\rm span}\{e_{1},...,e_{n}\}$.
Let $P_{n}$ be the projection mapping from $L^{2}(\T^{d})$ to $H_{n}$.
Let $H$ be a separable Hilbert space with the norm $\|\cdot\|_{H}$ and
$(\Omega , \mathcal F,\mathbb P)$ a complete probability space.
Then for any $p\geq2$, $\mcL^{p}(\Omega,\mathbb P;H)$
consists of all $H$-valued random variables $X$ such that
$\mbE\|X\|_H^p:=\int_{\Omega}\|X\|_H^p \d\mathbb P<\infty.$
We employ $C_{b}(H)$ to denote the space of all bounded continuous
real-valued functions on $H$.
Denote by $C_{b}(\mathbb R,H)$ the Banach space of all
continuous and bounded mappings $\varphi :\mathbb R\to H$
equipped with the norm
$\|\varphi\|_{\infty}:=\sup\{\|\varphi(t)\|_H:t\in\mathbb R\}$.
For any complete metric space $\mathcal X$,
let $C(\R,\mathcal X)$ be the space of all continuous mappings
$\varphi:\R\rightarrow \mathcal X$ with the compact-open topology.
Fix $\varphi\in C(\R,\mathcal X)$. Define
$\mathcal H(\varphi):=\overline{\{\varphi^{\tau}:\tau\in\R\}}$
and $\varphi^{\tau}(t)=\varphi(t+\tau)$ for all $t\in\R$.
We write $|\cdot|$ to mean the Euclidean norm on $\R$.
Denote by $[C]$ the integer part of $C$ for any constant $C\geq0$.

\section{Well-posedness of stochastic CGL equations}\label{Wellp}

Let $W(t),t\in\R$ be a two-sided cylindrical Wiener process with
the identity covariance operator defined on a separable Hilbert
space $(U,\langle~,~\rangle_{U})$.
We set $\mathcal F_{t}:=\sigma\{W(u)-W(v):  u,v\leq t\}$.
Consider the following stochastic CGL
equation on $\T^{d}, d=1,2,3$
\begin{equation}\label{maineq1}
\d u(t)=\left[(1+i\alpha )\Delta u(t)-(\gamma(t)+i\beta)|u(t)|^{2}u(t)
+f(t,u(t))\right]\d t + g(t,u(t))\d W(t),~t\in\R,
\end{equation}
where $\alpha,\beta\in\R$, $\gamma\in C_b(\R,\R_+)$,
$f:\R\times L^2(\T^d)\rightarrow L^2(\T^d)$
and $g:\R\times L^2(\T^d)\rightarrow L_2(U,L^2(\T^d))$.

Let us introduce the following conditions to investigate
the regularity of solutions to \eqref{maineq1}.
\begin{itemize}
  \item [\textbf{(H$_f^1$)}] There exist constants
  $K,L_{f}>0$
  such that for all $t\in\R$ and $x,y\in L^{2}(\T^{d})$
  $$\|f(t,x)-f(t,y)\|\leq L_{f}\|x-y\|,
  \quad \|f(t,0)\|\leq K.$$
  \item [\textbf{(H$_f^2$)}] There exist constants
  $K,L_{f}>0$
  such that for all $t\in\R$ and $x\in H_0^1$
  $$\|f(t,x)\|_{1}\leq L_{f}\|x\|_{1}+K.$$
  \item [\textbf{(H$_g^1$)}] There exist constants $K,L_{g}>0$ such that
  for all $t\in\R$ and $x,y\in L^{2}(\T^{d})$
  $$\|g(t,x)-g(t,y)\|_{L_{2}(U,L^{2}(\T^{d}))}\leq L_{g}\|x-y\|,
  \quad \|g(t,0)\|_{L_{2}(U,L^{2}(\T^{d}))}\leq K.$$
  \item [\textbf{(H$_g^2$)}] There exist constants $K,L_{g}>0$
  such that for all $t\in\R$ and $x,y\in H_0^{1}$
  $$
  \|g(t,x)-g(t,y)\|_{L_{2}(U,H_0^{1})}
  \leq L_{g}\|x-y\|_{1},\quad
  \|g(t,0)\|_{L_{2}(U,H_0^{1})}\leq K.
  $$
  \item [\textbf{(H$_g^3$)}] There exist constants $K,L_{g}>0$
  such that for all $t\in\R$ and $x,y\in H_0^{2}$
  $$
  \|g(t,x)-g(t,y)\|_{L_{2}(U,H_0^{2})}
  \leq L_{g}\|x-y\|_{2},\quad
  \|g(t,0)\|_{L_{2}(U,H_0^{2})}\leq K.
  $$
\end{itemize}

\begin{remark}\rm
Note that
\[
\|G\|_{L_{2}(U,L^{2}(\T^{d}))}\leq
\|G\|_{L_{2}(U,H_0^{1})}\leq\|G\|_{L_{2}(U,H_0^{2})}
\]
for all $G\in L_{2}(U,H_0^{2})$; see Remark B.0.6 in \cite{LR2015}
for more details.
\end{remark}

\begin{definition}\label{defsol} \rm
Fix $s\in\R$ and $T>0$. A continuous
$L^{2}(\T^{d})$-valued $\mathcal F_{t}$-adapted process
$u(t),s\leq t\leq s+T$ is said to be a {\em solution} of equation
\eqref{maineq1}, if
$$u\in L^{4}\left([s,s+T]\times\Omega,\d t
\otimes\mathbb P;L^{4}(\T^{d})\right)\cap
L^{2}\left([s,s+T]\times\Omega,\d t\otimes\mathbb P;H_0^{1}\right)$$
and it satisfies the following
stochastic integral equation $\mathbb P$-a.s.
\begin{align*}
u(t)
&
=\zeta_{s}+\int_{s}^{t}\left[(1+i\alpha)\Delta u(\tau)
-(\gamma(\tau)+i\beta)
|u(\tau)|^{2}u(\tau)+f(\tau,u(\tau))\right]\d \tau\\
&\quad
+\int_{s}^{t}g(\tau,u(\tau))\d W(\tau),\quad
s\leq t\leq s+T
\end{align*}
for any $\zeta_{s}\in \mcL^{2}\left(\Omega,\mathbb P;
L^{2}(\T^{d})\right)$.
\end{definition}

In what follows, for simplicity we write $C$ instead of
$C_{\lambda_{*},L_{f},L_{g},K,\alpha,\beta,|\gamma|_\infty}$
when $C$ depends on some parameters of
$\lambda_{*},L_{f},L_{g},K, \alpha,\beta$
and $|\gamma|_\infty$. But we write $C_{a}$ explicitly
when $C$ depends on other constant $a$.
We point out that $C$ and $C_{a}$ may change from line to line.
In order to study the well-posedness of \eqref{maineq1},
we will need some estimates.

\begin{lemma}\label{uniest}
Suppose that {\rm(H$_f^1$)} and {\rm(H$_g^1$)} hold. Fix $s\in\R$.
Let $u(t,s,\zeta_{s}),t\geq s$ be the solution of \eqref{maineq1}
with initial data $\zeta_{s}$.
For any $p\geq1$ and $T>0$,
if $\zeta_{s}\in \mcL^{2p}(\Omega,\mathbb P;L^{2}(\T^{d}))$ then
\begin{align}\label{uniest1}
  &
  \mathbb E\left(\sup_{s\leq t\leq s+T}\|u(t,s,\zeta_{s})\|^{2p}\right)
  +\mathbb E\int_{s}^{s+T}\|u(t,s,\zeta_{s})\|^{2p-2}
  \|u(t,s,\zeta_{s})\|_{1}^{2}\d t\\\nonumber
  &\quad
  +\mathbb E\int_{s}^{s+T}\gamma(t)
  \|u(t,s,\zeta_{s})\|^{2p-2}
  \|u(t,s,\zeta_{s})\|_{L^{4}(\T^{d})}^{4}\d t
  \leq C_{T}(1+\mathbb E\|\zeta_{s}\|^{2p}),
\end{align}
where the constant $C_{T}$ depends on
$p,\lambda_{*},L_{f},L_{g},K$ and $T$.
\end{lemma}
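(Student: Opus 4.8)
The plan is to derive \eqref{uniest1} from an It\^o formula for $\|u(t)\|^{2p}$, exploiting the dissipativity built into the CGL structure; write $u(t)=u(t,s,\zeta_s)$. Two algebraic identities drive everything: since $L^2(\T^d)$ carries the real inner product $\la u,v\ra=\mathcal R\int_{\T^d}u\bar v$, integration by parts gives $\la u,(1+i\alpha)\Delta u\ra=-\|u\|_1^2$, while a direct computation gives $\la u,-(\gamma(t)+i\beta)|u|^2u\ra=-\gamma(t)\|u\|_{L^4(\T^d)}^4$, with $\gamma(t)\ge 0$ because $\gamma\in C_b(\R,\R_+)$. Hence both leading nonlinear terms carry a favourable sign and end up on the left of \eqref{uniest1}. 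The remaining drift is dealt with by the linear growth bounds $\|f(t,x)\|\le L_f\|x\|+K$ and $\|g(t,x)\|_{L_2(U,L^2(\T^d))}\le L_g\|x\|+K$, which follow from {\rm(H$_f^1$)} and {\rm(H$_g^1$)}.

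Since well-posedness of \eqref{maineq1} is not yet available, I would perform the computation on the Galerkin approximations $u_n=P_nu$ --- for which the identities above persist, as $P_n$ is a self-adjoint projection commuting with $\Delta$ and It\^o's formula is classical in $H_n$ --- together with a localizing sequence $\tau_N:=\inf\{t\ge s:\|u_n(t)\|\ge N\}\wedge(s+T)$ ensuring integrability, derive bounds independent of $n$ and $N$, and pass to the limit. (If one instead argues with a given solution, the same localization plus Fatou suffices; and for $1\le p<2$ one should also apply It\^o to $r\mapsto(\delta+\|u(t)\|^2)^p$ and let $\delta\downarrow 0$, the extra terms carrying a harmless factor $\|u\|^2$.) Abbreviating $g=g(t,u(t))$, It\^o's formula for $\|u\|^2$ combined with the two identities yields $\d\|u(t)\|^2+2\|u(t)\|_1^2\,\d t+2\gamma(t)\|u(t)\|_{L^4(\T^d)}^4\,\d t=(2\la u(t),f(t,u(t))\ra+\|g\|_{L_2(U,L^2(\T^d))}^2)\,\d t+2\la u(t),g\,\d W(t)\ra$; applying It\^o once more with $r\mapsto r^p$, $r=\|u(t)\|^2$, using $\d\la\|u\|^2\ra_t\le 4\|u(t)\|^2\|g\|_{L_2(U,L^2(\T^d))}^2\,\d t$ and Young's inequality to absorb all lower-order terms, gives
\begin{align*}
&\d\|u(t)\|^{2p}+2p\|u(t)\|^{2p-2}\|u(t)\|_1^2\,\d t+2p\gamma(t)\|u(t)\|^{2p-2}\|u(t)\|_{L^4(\T^d)}^4\,\d t\\
&\qquad\le C\bigl(1+\|u(t)\|^{2p}\bigr)\,\d t+2p\|u(t)\|^{2p-2}\la u(t),g\,\d W(t)\ra.
\end{align*}

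Integrating over $[s,s+r]$ with $r\le T$, taking $\sup_{s\le t\le s+r}$ and expectations, the only delicate term is the stochastic integral: by the Burkholder--Davis--Gundy inequality it is bounded by $C\,\mathbb E\bigl(\sup_{s\le t\le s+r}\|u(t)\|^{2p}\int_s^{s+r}\|u(t)\|^{2p-2}\|g(t,u(t))\|_{L_2(U,L^2(\T^d))}^2\,\d t\bigr)^{1/2}$, and Young's inequality splits this into $\frac12\mathbb E\sup_{s\le t\le s+r}\|u(t)\|^{2p}$ plus $C\,\mathbb E\int_s^{s+r}(1+\|u(t)\|^{2p})\,\d t$. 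Absorbing the half-supremum on the left and temporarily dropping the two nonnegative dissipative integrals, $\psi(r):=\mathbb E\sup_{s\le t\le s+r}\|u(t)\|^{2p}$ (finite by the localization) satisfies $\psi(r)\le 2\mathbb E\|\zeta_s\|^{2p}+CT+C\int_0^r\psi(\rho)\,\d\rho$, so Gronwall's lemma gives $\psi(T)\le C_T(1+\mathbb E\|\zeta_s\|^{2p})$. Removing the stopping time by Fatou and inserting this supremum bound back into the integrated inequality --- this time retaining the two dissipative integrals and estimating the stochastic term by BDG using the now-known bound --- controls $\mathbb E\int_s^{s+T}\|u(t)\|^{2p-2}\|u(t)\|_1^2\,\d t$ and $\mathbb E\int_s^{s+T}\gamma(t)\|u(t)\|^{2p-2}\|u(t)\|_{L^4(\T^d)}^4\,\d t$ as well, which is \eqref{uniest1}.

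The main obstacle is organizational rather than conceptual: one must ensure that It\^o's formula, the BDG estimate and the Gronwall step are each invoked only where the quantities involved are a priori finite --- hence the Galerkin and stopping-time scaffolding --- and that all constants are independent of $n$ and $N$, so that passing to the limit is legitimate. Once this is in place, the proof is driven entirely by the two sign identities and the linear growth conditions {\rm(H$_f^1$)}, {\rm(H$_g^1$)}.
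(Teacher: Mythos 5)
Your proposal is correct and follows essentially the same route as the paper: It\^o's formula for $\|u\|^{2p}$, the two sign identities that make the $\|u\|_1^2$ and $\gamma\|u\|_{L^4}^4$ terms dissipative, dropping them to get the supremum bound via Burkholder--Davis--Gundy, Young and Gronwall, and then reinserting that bound to control the two dissipative integrals (where the paper simply takes expectations at $t=s+T$ so the martingale term vanishes, rather than invoking BDG again). The extra scaffolding you describe (Galerkin, stopping times, the $(\delta+\|u\|^2)^p$ regularization for $1\le p<2$) is care the paper leaves implicit but does not change the argument.
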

\begin{proof}
By It\^o's formula, (H$_f^1$) and (H$_g^1$), we get
\begin{align}\label{equniest1}
&
\|u(t,s,\zeta_{s})\|^{2p}\\\nonumber
&
=\|\zeta_{s}\|^{2p}+\int_{s}^{t}
p\|u(\sigma,s,\zeta_{s})\|^{2p-2}
\Big(2\langle(1+i\alpha )
\Delta u(\sigma,s,\zeta_{s}),u(\sigma,s,\zeta_{s})\rangle\\\nonumber
&\qquad
-2\la(\gamma(\sigma)+i\beta)|u(\sigma,s,\zeta_{s})|^{2}
u(\sigma,s,\zeta_{s}),u(\sigma,s,\zeta_{s})\ra\\\nonumber
&\qquad
+2\langle f(\sigma,u(\sigma,s,\zeta_{s})),u(\sigma,s,\zeta_{s}) \rangle
+\|g(\sigma,u(\sigma,s,\zeta_{s}))\|^{2}_{L_{2}(U,L^{2}(\T^{d}))}
\Big)\d\sigma\\\nonumber
&\quad
+2\int_{s}^{t}p\|u(\sigma,s,\zeta_{s})\|^{2p-2}
\langle u(\sigma,s,\zeta_{s}),g(\sigma,u(\sigma,s,\zeta_{s}))
\d W(\sigma)\rangle\\\nonumber
&\quad
+2p(p-1)\int_s^t\|u(\sigma,s,\zeta_{s})\|^{2p-4}
\|(g(\sigma,u(\sigma,s,\zeta_{s})))^*u(\sigma,s,\zeta_{s})\|_U^2\d\sigma\\\nonumber
&
\leq\|\zeta_{s}\|^{2p}+\int_{s}^{t}
p\|u(\sigma,s,\zeta_{s})\|^{2p-2}\Big(
-2\|u(\sigma,s,\zeta_{s})\|_{1}^{2}
-2\gamma(\sigma)\|u(\sigma,s,\zeta_{s})\|_{L^{4}(\T^{d})}^{4}
+4K^2\\\nonumber
&\qquad
+\theta\|u(\sigma,s,\zeta_{s})\|^{2}\Big)\d\sigma
+2\int_{s}^{t}p\|u(\sigma,s,\zeta_{s})\|^{2p-2}
\langle u(\sigma,s,\zeta_{s}),g(\sigma,u(\sigma,s,\zeta_{s}))
\d W(\sigma)\rangle\\\nonumber
&\quad
+2p(p-1)\int_s^t\|u(\sigma,s,\zeta_{s})\|^{2p-2}
\left(2L_g^2\|u(\sigma,s,\zeta_{s})\|^2+2K^2\right)\d\sigma,\\\nonumber
\end{align}
where $\theta:=2L_{f}^{2}+2L_{g}^{2}+1$.
Dropping negative terms on the right of the above inequality,
it follows from (H$_g^1$), Young's inequality and
Burkholder-Davis-Gundy inequality that
\begin{align*}
&
\mathbb E\left(\sup_{s\leq t\leq s+T}
\|u(t,s,\zeta_{s})\|^{2p}\right)\\
&
\leq \mathbb E\|\zeta_{s}\|^{2p}
+C\mathbb E\left(\sup_{s\leq t\leq s+T}\int_{s}^{t}\left(
\|u(\sigma,s,\zeta_{s})\|^{2p}+1\right)\d\sigma\right)\\
&\quad
+C\mathbb E \left(\int_{s}^{s+T}\|u(\sigma,s,\zeta_{s})\|^{4p-2}
\|g(\sigma,u(\sigma,s,\zeta_{s}))\|^{2}_{L_{2}(U,L^{2}(\T^{d}))}
\d\sigma\right)^{\frac{1}{2}}\\
&
\leq\mathbb E\|\zeta_{s}\|^{2p}+C\mathbb E\int_{s}^{s+T}\left(
\|u(\sigma,s,\zeta_{s})\|^{2p}+1\right)\d\sigma\\
&\quad
+\frac{1}{2}\mathbb E\left(\sup_{s\leq t\leq s+T}
\|u(t,s,\zeta_{s})\|^{2p}\right)+C_T\mathbb E\int_{s}^{s+T}
\|g(\sigma,u(\sigma,s,\zeta_{s}))\|^{2p}_{L_{2}(U,L^{2}(\T^{d}))}
\d\sigma\\
&
\leq\mathbb E\|\zeta_{s}\|^{2p}+C_{T}
+\frac{1}{2}\mathbb E\left(\sup_{s\leq t\leq s+T}
\|u(t,s,\zeta_{s})\|^{2p}\right)
+C_T\mathbb E\int_{s}^{s+T}\sup_{s\leq\tau\leq\sigma}
\|u(\tau,s,\zeta_{s})\|^{2p}\d\sigma.
\end{align*}
In view of Gronwall's lemma, we obtain
\begin{equation}\label{equniest2}
\mathbb E\left(\sup_{s\leq t\leq s+T}\|u(t,s,\zeta_{s})\|^{2p}\right)
\leq\left(2\mathbb E\|\zeta_{s}\|^{2p}+C_{T}\right)
{\rm{e}}^{C_T}.
\end{equation}

Taking expectation and letting $t=s+T$ on \eqref{equniest1}, with the help of
\eqref{equniest2} we have
\begin{align*}
&
\mathbb E\int_{s}^{s+T}\|u(\sigma,s,\zeta_{s})\|^{2p-2}
\|u(\sigma,s,\zeta_{s})\|_{1}^{2}\d\sigma
+\mathbb E\int_{s}^{s+T}\gamma(\sigma)
\|u(\sigma,s,\zeta_{s})\|^{2p-2}
\|u(\sigma,s,\zeta_{s})\|_{L^{4}(\T^{d})}^{4}\d \sigma\\
&
\leq C\mathbb E\|\zeta_{s}\|^{2p}
+C\mathbb E\int_{s}^{s+T}\left(
\|u(\sigma,s,\zeta_{s})\|^{2p}
+1\right)\d \sigma \\
&
\leq C_{T}\left(\mathbb E\|\zeta_{s}\|^{2p}+1\right).
\end{align*}
\end{proof}

In this paper, we mainly consider the stochastic CGL equation with
dissipative cubic term. Now we introduce a condition in the
following lemma, which implies that the cubic term is dissipative.
\begin{lemma}\label{Disscubic}
Let $t\in\R$. If $\gamma(t)\geq\frac{|\beta|}{\sqrt3}$, then
\[
\la(\gamma(t)+i\beta)|u|^2u-(\gamma(t)+i\beta)|v|^2v,u-v\ra
\geq0
\]
for all $u,v\in H_0^1(\T^d)$.
\end{lemma}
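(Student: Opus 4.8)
The plan is to reduce the statement to a pointwise inequality on $\C$ and then integrate. Recall that the real inner product on $L^2(\T^d)$ is $\la F,G\ra=\mathcal R\int_{\T^d}F(\xi)\overline{G(\xi)}\,\d\xi$, and that for $d\le3$ the embedding $H_0^1(\T^d)\hookrightarrow L^6(\T^d)$ guarantees $|u|^2u,|v|^2v\in L^2(\T^d)$, so the integrand below lies in $L^1(\T^d)$ and the real part may be pulled out of the integral. Hence it suffices to prove that for $\lambda:=\gamma(t)\ge|\beta|/\sqrt3$ and \emph{all} $z,w\in\C$,
\[
\mathcal R\Big[(\lambda+i\beta)\big(|z|^2z-|w|^2w\big)\overline{(z-w)}\Big]\ge0,
\]
and then to apply this with $z=u(\xi)$, $w=v(\xi)$ and integrate over $\xi\in\T^d$.

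To establish the pointwise inequality I would expand everything in terms of $p:=|z|^2\ge0$, $q:=|w|^2\ge0$ and $z\bar w=a+ib$ with $a,b\in\R$; note that $a^2+b^2=|z\bar w|^2=pq$ and $w\bar z=\overline{z\bar w}=a-ib$. A direct computation gives
\[
\big(|z|^2z-|w|^2w\big)\overline{(z-w)}=\big(p^2+q^2-a(p+q)\big)+i\,b\,(q-p),
\]
so the quantity to be bounded below equals $\Phi:=\lambda\big(p^2+q^2-a(p+q)\big)+\beta\,b\,(p-q)$. Since $(a,b)$ lies on the circle $a^2+b^2=pq$, the Cauchy--Schwarz inequality yields $-\lambda a(p+q)+\beta b(p-q)\ge-\sqrt{pq}\,\sqrt{\lambda^2(p+q)^2+\beta^2(p-q)^2}$, so $\Phi\ge0$ for all such $z,w$ as soon as
\[
\lambda^2(p^2+q^2)^2\ge pq\big(\lambda^2(p+q)^2+\beta^2(p-q)^2\big).
\]

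Finally I would invoke the polynomial identity $(p^2+q^2)^2-pq(p+q)^2=(p-q)^2(p^2+pq+q^2)$, which shows the last inequality is trivial when $p=q$ and, when $p\ne q$, equivalent after dividing by $(p-q)^2>0$ to $\lambda^2(p^2+pq+q^2)\ge\beta^2pq$. Since $p^2+q^2\ge2pq$ implies $p^2+pq+q^2\ge3pq$, this is in turn implied by $3\lambda^2\ge\beta^2$, i.e. by $\gamma(t)\ge|\beta|/\sqrt3$ (using $\lambda=\gamma(t)\ge0$), which is exactly the hypothesis. There is no serious obstacle: the only steps requiring care are the bookkeeping in the expansion and the polynomial identity, and the precise constant $1/\sqrt3$ is explained by the bound $p^2+pq+q^2\ge3pq$ being sharp as $p/q\to1$.
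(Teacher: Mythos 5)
Your proposal is correct, and it takes a genuinely different route from the paper. The paper rewrites the complex cubic as a map on $\R^2$, namely $H(t,{\bf a})=|{\bf a}|^{2}\bigl(\begin{smallmatrix}\gamma&-\beta\\ \beta&\gamma\end{smallmatrix}\bigr){\bf a}$, computes the Jacobian $\partial_{\bf a}H$, splits it into a symmetric matrix $B(t,{\bf a})$ plus an antisymmetric remainder, and checks that $B$ is positive definite when $3\gamma^2>\beta^2$ (positive diagonal entries and $\det B=(3\gamma^2-\beta^2)|{\bf a}|^4>0$); monotonicity then follows from a mean-value representation of $H({\bf u})-H({\bf v})$. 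You instead prove the pointwise inequality $\mathcal R\bigl[(\lambda+i\beta)(|z|^2z-|w|^2w)\overline{(z-w)}\bigr]\ge0$ directly on $\C$ by expanding in the invariants $p=|z|^2$, $q=|w|^2$, $z\bar w=a+ib$ with $a^2+b^2=pq$, applying Cauchy--Schwarz, and using the factorization $(p^2+q^2)^2-pq(p+q)^2=(p-q)^2(p^2+pq+q^2)\ge 3pq(p-q)^2$; I checked the algebra and it is right, and the integration step is justified by $H_0^1\hookrightarrow L^6$ for $d\le3$. Both arguments isolate the same constant $3\lambda^2\ge\beta^2$, but yours has two modest advantages: it avoids the mean-value step for an $\R^2$-valued map (where a single intermediate point $\theta$ as written in the paper is slightly loose, and in any case depends on $\xi$), and it covers the boundary case $\gamma(t)=|\beta|/\sqrt3$ directly, whereas the paper's determinant argument is carried out only for the strict inequality and the stated $\ge$ case would need an extra limiting remark. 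The paper's version, on the other hand, makes transparent \emph{why} the antisymmetric ($i\beta$) part is harmless — it drops out of the quadratic form — which is the conceptual content behind your Cauchy--Schwarz step.
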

\begin{proof}
For any $u=u_1+iu_2,v=v_1+iv_2\in H_0^1({\T^d})$, let ${\bf u}=(u_1,u_2)^T, {\bf v}=(v_1,v_2)^T$,
where $(u_1,u_2)^T$ is the transpose of $(u_1,u_2)$.  Define
$$
H(t,{\bf u}):=|{\bf u}|^{2}\begin{pmatrix} \gamma(t) & -\beta \\
\beta & \gamma(t) \end{pmatrix} {\bf u}.
$$
We note that
\begin{align*}
\la(\gamma(t)+i\beta)|u|^2u-(\gamma(t)+i\beta)|v|^2v,u-v\ra
=\la H(t,{\bf u})-H(t,{\bf v}),{\bf u}-{\bf v}\ra_{L^2(\T^d;\R)\times L^2(\T^d;\R)}.
\end{align*}
Now for any $t\in\R$ and ${\bf a}=(a_1,a_2)^T\in\R^2$ we obtain
\begin{align*}
\partial_{\bf a} H(t,{\bf a})
&
=\begin{pmatrix} 3\gamma(t)a_{1}^{2}-2\beta a_{1}a_{2}+\gamma(t)a_{2}^{2} &
-\beta a_{1}^{2}+2\gamma(t)a_{1}a_{2}-3\beta a_{2}^{2}\\
3\beta a_{1}^{2}+2\gamma(t)a_{1}a_{2}+\beta a_{2}^{2} &
\gamma(t)a_{1}^{2}+2\beta a_{1}a_{2}+3\gamma(t)a_{2}^{2} \end{pmatrix}\\
&
=B(t,{\bf a})+\begin{pmatrix} 0 &-2\beta a_{1}^{2}-2\beta a_{2}^{2}\\
2\beta a_{1}^{2}+2\beta a_{2}^{2} & 0 \end{pmatrix},
\end{align*}
where
$$
B(t,{\bf a})=\begin{pmatrix} 3\gamma(t)a_{1}^{2}-2\beta a_{1}a_{2}+\gamma(t)a_{2}^{2} &
\beta a_{1}^{2}+2\gamma(t)a_{1}a_{2}-\beta a_{2}^{2}\\
\beta a_{1}^{2}+2\gamma(t)a_{1}a_{2}-\beta a_{2}^{2} &
\gamma(t)a_{1}^{2}+2\beta a_{1}a_{2}+3\gamma(t)a_{2}^{2} \end{pmatrix}.
$$
Note that the diagonal elements of $B$ are positive for all
$t\in\R,{\bf a}\in \R^2$ if $\gamma(t)>\frac{|\beta|}{\sqrt{3}}$. And
\begin{equation*}
{\rm det} B(t,{\bf a})=\left(3\gamma(t)^2-\beta^{2}\right)
\left(a_{1}^{2}+a_{2}^{2}\right)^{2}
=\left(3\gamma(t)^2-\beta^{2}\right)|{\bf a}|^{4}
\end{equation*}
is also positive. Thus the matrix $B$ is positive definite
for all $t\in\R,{\bf a}\in\R^2$ if $\gamma(t)>\frac{|\beta|}{\sqrt{3}}$.
Therefore, for all $u,v\in L^2(\T^d)$, we have
\begin{align*}
&
\la(\gamma(t)+i\beta)|u|^2u-(\gamma(t)+i\beta)|v|^2v,u-v\ra\\
&
=\la\partial_{\bf a}H(t,{\bf w})({\bf u}-{\bf v}),{\bf u}-{\bf v}
\ra_{L^2(\T^d;\R)\times L^2(\T^d;\R)}\geq0
\end{align*}
provided $\gamma(t)>\frac{|\beta|}{\sqrt{3}}$ for $t\in\R$,
where ${\bf w}=\theta{\bf u}+(1-\theta){\bf v}$ for some $\theta\in[0,1]$.
\end{proof}

With the help of \eqref{uniest1},
we give a theorem about the existence and uniqueness of solutions
to \eqref{maineq1}. The proof is based on the Galerkin
method (see e.g. \cite{LR2015}), and we put it in
Appendix \ref{eupf}.

\begin{theorem}\label{wellD}
Let $\gamma(\cdot)\geq\frac{|\beta|}{\sqrt3}$.
Assume that {\rm(H$_f^1$)} and {\rm(H$_g^1$)} hold. Let $s\in\R$.
Then for any $\zeta_{s}\in \mcL^{2}(\Omega,\mathbb P;L^{2}(\T^{d}))$
there exists a unique solution $u(t,s,\zeta_{s}),t\geq s$
to \eqref{maineq1} in the sense of Definition \ref{defsol}.
\end{theorem}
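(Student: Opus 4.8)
The plan is to prove Theorem~\ref{wellD} by the Galerkin method within the variational framework for SPDEs, following \cite{LR2015}; the CGL-specific inputs are the uniform bound \eqref{uniest1} (which supplies coercivity and weak compactness) and Lemma~\ref{Disscubic} (which, together with (H$_f^1$) and (H$_g^1$), shows that the operators $\mathcal A(t,u):=(1+i\alpha)\Delta u-(\gamma(t)+i\beta)|u|^2u+f(t,u)$ and $\mathcal B(t,u):=g(t,u)$ satisfy the monotonicity inequality
\[
2\la\mathcal A(t,u)-\mathcal A(t,v),u-v\ra+\|\mathcal B(t,u)-\mathcal B(t,v)\|_{L_2(U,L^2(\T^d))}^2\le(2L_f+L_g^2)\|u-v\|^2
\]
on the Gelfand triple $V:=H_0^1\cap L^4(\T^d)\hookrightarrow H:=L^2(\T^d)\hookrightarrow V^*$, since $\la(1+i\alpha)\Delta(u-v),u-v\ra=-\|u-v\|_1^2\le0$ and the cubic difference is nonnegative by Lemma~\ref{Disscubic}). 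Fix $s\in\R$ and $T>0$.

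First I would solve the Galerkin system on $H_n={\rm span}\{e_1,\dots,e_n\}$,
\[
\d u^n(t)=P_n\mathcal A(t,u^n(t))\,\d t+P_n\mathcal B(t,u^n(t))\,\d W(t),\qquad u^n(s)=P_n\zeta_s ,
\]
whose coefficients are locally Lipschitz on $H_n$, so it admits a unique maximal strong solution. Applying It\^o's formula to $\|u^n(t)\|^2$ and running the estimate from the proof of Lemma~\ref{uniest} verbatim --- the projections are harmless because $\la P_nz,u^n\ra=\la z,u^n\ra$ for $u^n\in H_n$ and $\la(\gamma(t)+i\beta)|u^n|^2u^n,u^n\ra=\gamma(t)\|u^n\|_{L^4(\T^d)}^4\ge0$ --- yields, uniformly in $n$ (hence excluding blow-up, so $u^n$ exists on $[s,s+T]$),
\[
\mbE\Bigl(\sup_{s\le t\le s+T}\|u^n(t)\|^2\Bigr)+\mbE\int_s^{s+T}\|u^n(t)\|_1^2\,\d t+\mbE\int_s^{s+T}\gamma(t)\|u^n(t)\|_{L^4(\T^d)}^4\,\d t\le C_T\bigl(1+\mbE\|\zeta_s\|^2\bigr).
\]
Since $\gamma(\cdot)\ge|\beta|/\sqrt3$, this bounds $(u^n)$ in $L^2([s,s+T]\times\Omega;V)\cap L^2(\Omega;L^\infty([s,s+T];H))$, and consequently $(\mathcal A(\cdot,u^n))$ and $(\mathcal B(\cdot,u^n))$ are bounded in $L^{4/3}([s,s+T]\times\Omega;V^*)$ and $L^2([s,s+T]\times\Omega;L_2(U,L^2(\T^d)))$, respectively.

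By Banach--Alaoglu I extract a subsequence (not relabelled) with $u^n\rightharpoonup u$ in $L^2([s,s+T]\times\Omega;V)$, $\mathcal A(\cdot,u^n)\rightharpoonup\mathcal A^*$ in $L^{4/3}([s,s+T]\times\Omega;V^*)$ and $g(\cdot,u^n)\rightharpoonup\mathcal B^*$ in $L^2([s,s+T]\times\Omega;L_2(U,L^2(\T^d)))$; passing to the limit in the Galerkin equation tested against $\bigcup_m H_m$ shows that $u$ has a continuous $H$-valued modification belonging to $L^4([s,s+T]\times\Omega;L^4(\T^d))\cap L^2([s,s+T]\times\Omega;H_0^1)$ and satisfying $u(t)=\zeta_s+\int_s^t\mathcal A^*(\sigma)\,\d\sigma+\int_s^t\mathcal B^*(\sigma)\,\d W(\sigma)$. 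The crux is then to identify $\mathcal A^*=\mathcal A(\cdot,u)$ and $\mathcal B^*=\mathcal B(\cdot,u)$, since $u\mapsto|u|^2u$ is not weakly continuous; I would do this by the monotonicity (Minty) trick. Put $K:=2L_f+L_g^2$. The It\^o formula in the Gelfand triple applied to $e^{-Kt}\|u^n(t)\|^2$ and to $e^{-Kt}\|u(t)\|^2$, combined with the monotonicity inequality above, the weak lower semicontinuity of the norm and the weak convergences, gives, for every progressively measurable $v\in L^4([s,s+T]\times\Omega;L^4(\T^d))\cap L^2([s,s+T]\times\Omega;H_0^1)$,
\[
\mbE\int_s^{s+T}e^{-K\sigma}\Bigl(2\la\mathcal A^*(\sigma)-\mathcal A(\sigma,v),u(\sigma)-v\ra+\|\mathcal B^*(\sigma)-\mathcal B(\sigma,v)\|_{L_2(U,L^2(\T^d))}^2-K\|u(\sigma)-v\|^2\Bigr)\d\sigma\le0 .
\]
Taking $v=u$ forces $\mathcal B^*=g(\cdot,u)$; taking $v=u-\lambda w$ with $w\in L^\infty([s,s+T]\times\Omega;V)$, dividing by $\lambda>0$, letting $\lambda\downarrow0$ and using the hemicontinuity of $\mathcal A$ (the cubic part converges by dominated convergence, the linear and $f$-parts are affine/Lipschitz) yields $\mathcal A^*=\mathcal A(\cdot,u)$. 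Hence $u$ is a solution of \eqref{maineq1} in the sense of Definition~\ref{defsol}. I expect this Minty identification of the cubic nonlinearity to be the main obstacle; it is exactly where the dissipativity $\gamma(\cdot)\ge|\beta|/\sqrt3$ enters decisively.

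Uniqueness is then immediate: for two solutions $u,v$ with $u(s)=v(s)$, It\^o's formula for $\|u(t)-v(t)\|^2$ together with the monotonicity inequality (the martingale term has zero expectation) gives $\mbE\|u(t)-v(t)\|^2\le K\int_s^t\mbE\|u(\sigma)-v(\sigma)\|^2\,\d\sigma$, and Gronwall's lemma forces $u\equiv v$.
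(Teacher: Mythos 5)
Your proposal is correct and follows essentially the same route as the paper's proof in Appendix B: Galerkin approximation, the uniform bound of Lemma \ref{uniest} to get weak compactness in $L^2(H_0^1)\cap L^4(L^4)$, passage to the limit, and identification of the nonlinear drift and diffusion limits via the monotonicity/Minty trick with the weight $e^{-(2L_f+L_g^2)t}$ (the paper's constant $c$ is exactly your $K$), with uniqueness from the same monotonicity inequality and Gronwall. The only cosmetic difference is that you package the whole drift into a single weak limit $\mathcal A^*$ in $L^{4/3}(V^*)$, whereas the paper tracks the weak limits of $|u^n|^2u^n$, $P_nf(\cdot,u^n)$ and $P_ng(\cdot,u^n)$ separately.
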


Now we discuss the regularity of solutions to \eqref{maineq1}.
\begin{lemma}\label{uniest01}
Let $\gamma(\cdot)\geq\frac{|\beta|}{\sqrt3}$.
Suppose that {\rm(H$_f^1$)} and {\rm(H$_g^1$)} hold. Fix $s\in\R$.
Let $u(t,s,\zeta_{s}),t\geq s$ be the solution of \eqref{maineq1}
with initial data $\zeta_{s}$.
For any $p\geq1$ and $T>0$ we have the following statements.
\begin{enumerate}
  \item If $\zeta_{s}\in \mcL^{2p}(\Omega,\mathbb P;H_0^1)$
  and (H$_g^2$) holds, then
  \begin{equation}\label{uniest2}
  \sup_{s\leq t\leq s+T}\mathbb E
  \|u(t,s,\zeta_{s})\|_{1}^{2p}
  \leq C_{T}
  \left(1+\mathbb E\|\zeta_s\|_{1}^{2p}\right).
  \end{equation}
  \item
  If $\zeta_{s}\in \mcL^{2p}(\Omega,\mathbb P;H_0^2)\cap\mcL^{10p}(\Omega,\mathbb P;H_0^1)$,
  (H$_f^2$) and (H$_g^3$) hold, then
  \begin{equation}\label{uniest3}
  \sup_{s\leq t\leq s+T}\mbE\|u(t,s,\zeta_s)\|_{2}^{2p}
  \leq C_{T}\left(1+\mathbb E\|\zeta_s\|_{2}^{2p}\right).
  \end{equation}
\end{enumerate}
Here the constant $C_{T}$ depends on $p,\lambda_{*},L_{f},L_{g},K$ and $T$.
\end{lemma}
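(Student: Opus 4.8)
The plan is to use the Galerkin-plus-energy-estimate scheme that already underlies Lemma~\ref{uniest} and the well-posedness Theorem~\ref{wellD}. All the It\^o computations below are first carried out on the finite-dimensional Galerkin approximations of $u=u(\cdot,s,\zeta_{s})$, where they are rigorous; one obtains bounds uniform in the approximation index and passes to the limit using the weak lower semicontinuity of the $H_0^{1}$- and $H_0^{2}$-norms together with Theorem~\ref{wellD}. Below I suppress the Galerkin index and write the estimates directly for $u$. Since only $\sup_{t}\mathbb E(\cdot)$ is asserted (no expectation of a supremum in $t$), after localization the stochastic integrals drop out upon taking expectations, so no Burkholder--Davis--Gundy step is needed, and both statements then follow from Gronwall's lemma.

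For (i), apply It\^o's formula to $t\mapsto\|u(t)\|_{1}^{2p}=(\langle-\Delta u(t),u(t)\rangle)^{p}$. The $(1+i\alpha)\Delta u$ term produces the dissipation $-2p\|u\|_{1}^{2p-2}\|u\|_{2}^{2}$. The cubic term produces $-2p\|u\|_{1}^{2p-2}\langle-\Delta u,(\gamma+i\beta)|u|^{2}u\rangle$; integrating by parts, $\langle-\Delta u,(\gamma+i\beta)|u|^{2}u\rangle=\langle\nabla u,(\gamma+i\beta)\nabla(|u|^{2}u)\rangle$, and the same elementary algebra as in Lemma~\ref{Disscubic} (relying on $3\gamma^{2}\geq\beta^{2}$, i.e.\ $\gamma\geq|\beta|/\sqrt3$) shows this pairing is nonnegative, so the whole cubic contribution is $\leq0$ and is discarded. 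The $f$-drift $2p\|u\|_{1}^{2p-2}\langle-\Delta u,f(t,u)\rangle$ is bounded via Cauchy--Schwarz by $2p\|u\|_{1}^{2p-2}\|u\|_{2}\|f(t,u)\|$, and then, using (H$_{f}^{1}$), the Poincar\'e inequality $\|u\|\leq\lambda_{*}^{-1/2}\|u\|_{1}$ and Young's inequality, by $p\|u\|_{1}^{2p-2}\|u\|_{2}^{2}+C(\|u\|_{1}^{2p}+1)$, the first term being absorbed by the Laplacian dissipation. The $g$-drift $p\|u\|_{1}^{2p-2}\|g(t,u)\|_{L_{2}(U,H_0^{1})}^{2}$ and the It\^o correction, via (H$_{g}^{2}$) (so that $\|g(t,u)\|_{L_{2}(U,H_0^{1})}\leq L_{g}\|u\|_{1}+K$) and Young, are each $\leq C(\|u\|_{1}^{2p}+1)$. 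This leaves $\d\|u(t)\|_{1}^{2p}\leq C(\|u(t)\|_{1}^{2p}+1)\,\d t+\d M_{t}$ with $M$ a local martingale, and Gronwall's lemma on $[s,s+T]$ gives \eqref{uniest2}.

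For (ii), apply It\^o's formula to $t\mapsto\|u(t)\|_{2}^{2p}=(\langle(-\Delta)^{2}u(t),u(t)\rangle)^{p}$; now the Laplacian yields $-2p\|u\|_{2}^{2p-2}\|u\|_{3}^{2}$. For the cubic term I integrate by parts twice, writing $\langle(-\Delta)^{2}u,(\gamma+i\beta)|u|^{2}u\rangle=\langle\Delta u,(\gamma+i\beta)\Delta(|u|^{2}u)\rangle$ and expanding $\Delta(|u|^{2}u)=2|u|^{2}\Delta u+u^{2}\Delta\bar u+2\bar u(\nabla u)^{2}+4u|\nabla u|^{2}$. The pairing of $\Delta u$ with $(\gamma+i\beta)(2|u|^{2}\Delta u+u^{2}\Delta\bar u)$ is, again by the Lemma~\ref{Disscubic} computation, bounded below by $(2\gamma-\sqrt{\gamma^{2}+\beta^{2}})\int_{\T^{d}}|u|^{2}|\Delta u|^{2}\geq0$, so that contribution is $\leq0$ and is dropped; the remaining commutator terms are bounded in modulus by $C\|u\|_{2}^{2p-2}\int_{\T^{d}}|\Delta u|\,|u|\,|\nabla u|^{2}$. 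Since $d\leq3$, H\"older's inequality, the Sobolev embeddings $H_0^{1}\hookrightarrow L^{6}(\T^{d})$ and $\nabla\colon H_0^{2}\to L^{6}(\T^{d})$, and the interpolation $\|u\|_{2}\leq C\|u\|_{1}^{1/2}\|u\|_{3}^{1/2}$ give
\[
\int_{\T^{d}}|\Delta u|\,|u|\,|\nabla u|^{2}\leq C\,\|u\|_{2}\,\|u\|_{1}\,\|\nabla u\|_{L^{6}(\T^{d})}^{2}\leq C\|u\|_{1}^{5/2}\|u\|_{3}^{3/2}\leq \tfrac{1}{4}\|u\|_{3}^{2}+C\|u\|_{1}^{10},
\]
so that, multiplying by $\|u\|_{2}^{2p-2}$ and using Young once more ($\|u\|_{2}^{2p-2}\|u\|_{1}^{10}\leq C\|u\|_{2}^{2p}+C\|u\|_{1}^{10p}$), the $\|u\|_{3}$-part is absorbed by the Laplacian dissipation. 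The $f$-drift is treated with (H$_{f}^{2}$): since $f(t,u)\in H_0^{1}$ with $\|f(t,u)\|_{1}\leq L_{f}\|u\|_{1}+K$, one has $2p\|u\|_{2}^{2p-2}\langle(-\Delta)^{2}u,f(t,u)\rangle\leq2p\|u\|_{2}^{2p-2}\|u\|_{3}\|f(t,u)\|_{1}$, again absorbed up to $C(\|u\|_{2}^{2p}+\|u\|_{1}^{2p}+1)$; the $g$-drift and the It\^o correction, via (H$_{g}^{3}$), give $\leq C(\|u\|_{2}^{2p}+1)$. Altogether $\d\|u(t)\|_{2}^{2p}\leq C(\|u(t)\|_{2}^{2p}+\|u(t)\|_{1}^{10p}+1)\,\d t+\d\widetilde M_{t}$. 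Taking expectations and invoking part (i) with exponent $10p$ in place of $2p$ — which is exactly where the hypothesis $\zeta_{s}\in\mcL^{10p}(\Omega,\mathbb P;H_0^{1})$ is used — to bound $\sup_{s\leq t\leq s+T}\mathbb E\|u(t)\|_{1}^{10p}$, Gronwall's lemma on $[s,s+T]$ yields \eqref{uniest3}.

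The main obstacle is the commutator term $\int_{\T^{d}}|\Delta u|\,|u|\,|\nabla u|^{2}$ at the $H_0^{2}$-level: one has to balance the interpolation exponents so that the highest-order factor $\|u\|_{3}$ enters with power strictly below $2$ (hence absorbable into $-2p\|u\|_{2}^{2p-2}\|u\|_{3}^{2}$), while the remaining factor is a pure power of $\|u\|_{1}$, whose high moments are controlled by part (i). This is precisely where the restriction $d\leq3$ and the extra high-moment assumption on $\zeta_{s}$ enter. A secondary, more technical point is that none of the It\^o expansions is a priori legitimate for the solution itself (which is only known to lie in the space of Definition~\ref{defsol}), so they must first be performed on Galerkin approximants and the $H_0^{1}$- and $H_0^{2}$-bounds recovered in the limit.
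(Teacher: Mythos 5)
Your proof is correct and follows essentially the same route as the paper's: Galerkin approximation, It\^o's formula for $\|\cdot\|_1^{2p}$ and $\|\cdot\|_2^{2p}$, dropping (or absorbing) the cubic term, the interpolation bound $\|u\|_3\,\||u|^2|\nabla u|\|\leq C\|u\|_3^{3/2}\|u\|_1^{5/2}\leq\varepsilon\|u\|_3^2+C_\varepsilon\|u\|_1^{10}$, Gronwall, control of $\sup_t\mathbb{E}\|u\|_1^{10p}$ by part (i) at exponent $10p$, and weak lower semicontinuity to pass to the limit. The only cosmetic difference is in the cubic term of part (ii): the paper integrates by parts once and crudely bounds the whole term by $C\|u\|_3\,\||u|^2|\nabla u|\|$ without exploiting any sign, whereas you integrate by parts twice and discard a sign-definite leading part (using $\gamma\geq|\beta|/\sqrt3$) before estimating the remaining gradient terms --- both land on the same final estimate.
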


\begin{proof}
(i)
Fixing $n\geq1$, let $u^n(t),t\geq s$ be the solution to
the  finite dimensional equation \eqref{GFeq}.
Then it follows from It\^o's formula and integration by parts that
\begin{align*}
\mathbb E\|u^{n}(t)\|_{1}^{2p}
&
=\mathbb E\|P_{n}\zeta_{s}\|_{1}^{2p}+\mathbb E\int_{s}^{t}
p\|u^{n}(\sigma)\|_{1}^{2p-2}
\Big(
2\langle(1+i\alpha )\Delta u^{n}(\sigma),u^{n}(\sigma)\rangle_{1}\\
&\qquad
-2\la(\gamma(t)+i\beta)P_{n}|u^{n}(\sigma)|^{2}u^{n}(\sigma),
u^{n}(\sigma)\ra_{1}\\
&\qquad
+2\langle P_{n}f(\sigma,u^{n}(\sigma)),
u^{n}(\sigma)\rangle_{1}
+\|P_{n}g(\sigma,u^{n}(\sigma))\|_{L_{2}(U,H_0^{1})}^{2}\Big)\d\sigma\\
&\quad
+2p(p-1)\mathbb E\int_{s}^{t}\|u^{n}(\sigma)\|_{1}^{2p-4}
\|(P_{n}g(\sigma,u^{n}(\sigma)))^*u^{n}(\sigma)\|_U^2\d\sigma\\
&
\leq\mathbb E\|\zeta_{s}\|_{1}^{2p}+\mathbb E\int_{s}^{t}
p\|u^{n}(\sigma)\|_{1}^{2p-2}\Big(
2\langle-(1+i\alpha )\Delta u^{n}(\sigma),\Delta u^{n}(\sigma)\rangle\\
&\qquad
+2\la(\gamma(t)+i\beta)P_{n}|u^{n}(\sigma)|^{2}u^{n}(\sigma),
\Delta u^{n}(\sigma)\ra\\
&\qquad
-2\langle P_{n}f(\sigma,u^{n}(\sigma)),
\Delta u^{n}(\sigma)\rangle
+\|g(\sigma,u^{n}(\sigma))\|_{L_{2}(U,H_0^{1})}^{2}
\Big)\d\sigma\\
&\quad
+2p(p-1)\mathbb E\int_{s}^{t}\|u^{n}(\sigma)\|_{1}^{2p-2}
\|g(\sigma,u^{n}(\sigma))\|_{L_2(U,H_0^1)}^2\d\sigma.
\end{align*}
Notice that
\[
\langle(\gamma(t)+i\beta)|u|^{2}u,\Delta u \rangle\leq0
\]
for all $u\in H^{1}$ and $t\in\R$ if $\gamma(\cdot)\geq0$, which implies that
\begin{align}\label{esteqH1}
&
\mathbb E\|u^{n}(t)\|_{1}^{2p}\\\nonumber
&
\leq\mathbb E\|\zeta_{s}\|_{1}^{2p}+\mathbb E\int_{s}^{t}
p\|u^{n}(\sigma)\|_{1}^{2p-2}\Big(
-2\|\Delta u^{n}(\sigma)\|^{2}
-2\langle f(\sigma,u^{n}(\sigma)),\Delta u^{n}(\sigma)\rangle\\\nonumber
&\qquad
+(2p-1)\|g(\sigma,u^{n}(\sigma))
-g(\sigma,0)\|_{L_{2}(U,H_0^{1})}^{2}
-(2p-1)\|g(\sigma,0)\|_{L_{2}(U,H_0^{1})}^{2}\\\nonumber
&\qquad
+2(2p-1)\langle g(\sigma,u^{n}(\sigma)),
g(\sigma,0)\rangle_{L_{2}(U,H_0^{1})}\Big)
\d\sigma
\end{align}
since $\la P_nv,w\ra=\la v,w\ra$ for all $v\in L^2(\T^d)$ and $w\in H_n$.
In view of (H$_f^1$), (H$_g^2$) and Young's inequality, we obtain
\begin{align*}
\mathbb E\|u^{n}(t)\|_{1}^{2p}
&
\leq\mathbb E\|\zeta_{s}\|_{1}^{2p}+C\mathbb E\int_{s}^{t}
\|u^{n}(\sigma)\|_{1}^{2p-2}\Big(
\|u^{n}(\sigma)\|^{2}
+\|u^{n}(\sigma)\|_{1}^{2}
+1\Big)\d\sigma\\
&
\leq\mathbb E\|\zeta_{s}\|_{1}^{2p}+C\mathbb E\int_{s}^{t}
\left(\|u^{n}(\sigma)\|_{1}^{2p}
+1\right)\d\sigma.
\end{align*}
By Gronwall's lemma, one sees that
\begin{align}\label{uGLH1}
\mathbb E\|u^{n}(t)\|_{1}^{2p}
\leq C_T(1+\E\|\zeta_s\|_{1}^{2p}).
\end{align}
Therefore, it follows from the reflexivity of $\mcL^{2p}(\Omega,\mbP;H_0^1)$
that
\begin{align*}
\sup_{s\leq t\leq s+T}\mathbb E\|u(t)\|_{1}^{2p}
\leq C_T(1+\E\|\zeta_s\|_{1}^{2p}).
\end{align*}

(ii)
By It\^o's formula, we have
\begin{align*}
\mathbb E\|u^{n}(t)\|_{2}^{2p}
&
=\mathbb E\|P_{n}\zeta_{s}\|_{2}^{2p}+\mathbb E\int_{s}^{t}
p\|u^{n}(\sigma)\|_{2}^{2p-2}
\Big(
2\langle(1+i\alpha )\Delta u^{n}(\sigma),u^{n}(\sigma)\rangle_{2}\\
&\qquad
-2\la(\gamma(t)+i\beta)P_{n}|u^{n}(\sigma)|^{2}u^{n}(\sigma),
u^{n}(\sigma)\ra_{2}\\
&\qquad
+2\langle P_{n}f(\sigma,u^{n}(\sigma)),
u^{n}(\sigma)\rangle_{2}
+\|P_{n}g(\sigma,u^{n}(\sigma))\|_{L_{2}(U,H_0^{2})}^{2}\Big)\d\sigma\\
&\quad
+2p(p-1)\mathbb E\int_{s}^{t}\|u^{n}(\sigma)\|_{2}^{2p-4}
\|(P_{n}g(\sigma,u^{n}(\sigma)))^*u^{n}(\sigma)\|_U^2\d\sigma\\
&
\leq\mathbb E\|\zeta_{s}\|_{2}^{2p}+\mathbb E\int_{s}^{t}
p\|u^{n}(\sigma)\|_{2}^{2p-2}\Big(
2\langle(1+i\alpha )\Delta u^{n}(\sigma),(-\Delta)^2 u^{n}(\sigma)\rangle\\
&\qquad
-2\la(\gamma(t)+i\beta)P_{n}|u^{n}(\sigma)|^{2}u^{n}(\sigma),
(-\Delta)^2 u^{n}(\sigma)\ra\\
&\qquad
+2\langle P_{n}f(\sigma,u^{n}(\sigma)),
(-\Delta)^2 u^{n}(\sigma)\rangle
+\|g(\sigma,u^{n}(\sigma))\|_{L_{2}(U,H_0^{2})}^{2}
\Big)\d\sigma\\
&\quad
+2p(p-1)\mathbb E\int_{s}^{t}\|u^{n}(\sigma)\|_{2}^{2p-2}
\|g(\sigma,u^{n}(\sigma))\|_{L_2(U,H_0^2)}^2\d\sigma,
\end{align*}
which implies that
\begin{align*}
&
\mathbb E\|u^{n}(t)\|_{2}^{2p}\\\nonumber
&
\leq\mathbb E\|\zeta_{s}\|_{2}^{2p}+\mathbb E\int_{s}^{t}
p\|u^{n}(\sigma)\|_{2}^{2p-2}\Big(-2\|u^n(\sigma)\|_{3}^2
+C\|u^n(\sigma)\|_{3}\||u^n(\sigma)|^2|\nabla u^n(\sigma)|\|\\\nonumber
&\qquad
+2\|f(\sigma,u^n(\sigma))\|_{1}\|u^n(\sigma)\|_{3}
+(2p-1)\|g(\sigma,u^{n}(\sigma))\|_{L_2(U,H_0^2)}^2\Big)\d\sigma
\end{align*}
by integration by parts. Note that for all $u\in H_0^3$ we have
\begin{align*}
\|u\|_{3}\||u|^2|\nabla u|\|
&
\leq\|u\|_{3}\|u\|_{L^6(\T^d)}^2\|\nabla u\|_{L^6(\T^d)}\\\nonumber
&
\leq\|u\|_{3}\|u\|_{1}^2\|u\|_{2}\\\nonumber
&
\leq C\|u\|_{3}^{3/2}\|u\|_{1}^{5/2}
\end{align*}
by the embedding of $H_0^1\subset L^6(\T^d),d\leq3$ and the following Gagliardo-Nirenberg inequality
\begin{equation*}
\|u\|_{2}\leq C\|u\|_{3}^{1/2}\|u\|^{1/2}_{L^6(\T^d)}
\end{equation*}
for all $u\in H_0^3$.
Hence combining this with Young's inequality, (H$_f^1$),
(H$_f^2$) and (H$_g^3$), for any $0<\veps<1$ we have
\begin{align}\label{H202}
\mathbb E\|u^{n}(t)\|_{2}^{2p}
&
\leq\mathbb E\|\zeta_{s}\|_{2}^{2p}+\mathbb E\int_{s}^{t}
p\|u^{n}(\sigma)\|_{2}^{2p-2}\Big((-2+\veps)\|u^{n}(\sigma)\|_{3}^2
+C_\veps\|u^{n}(\sigma)\|_{1}^{10}\\\nonumber
&\qquad
+C_{\veps}\|u^{n}(\sigma)\|_{1}^2+(2p-1+\veps)L_g^2\|u^{n}(\sigma)\|_{2}^2
+C_\veps\Big)\d\sigma\\\nonumber
&
\leq\mathbb E\|\zeta_{s}\|_{2}^{2p}+\mathbb E\int_{s}^{t}\Big(
p\left((-2+\veps)\lambda_*+(2p-1+\veps)L_g^2+\veps\right)\|u^{n}(\sigma)\|_{2}^{2p}\\\nonumber
&\qquad
+C_\veps\|u^{n}(\sigma)\|_{1}^{10p}+C_\veps\Big)\d\sigma.
\end{align}
Then it follows from Gronwall's lemma and \eqref{uGLH1} that
\begin{align*}
\sup_{s\leq t\leq s+T}\mathbb E\|u^{n}(t)\|_{2}^{2p}
&
\leq C_T\left(\mathbb E\|\zeta_{s}\|_{2}^{2p}
+\mbE\int_s^{s+T}\left(\|u^{n}(\sigma)\|_{1}^{10p}
+1\right)\d\sigma\right)\\\nonumber
&
\leq C_T\left(\mathbb E\|\zeta_{s}\|_{2}^{2p}
+\mathbb E\|\zeta_{s}\|_{1}^{10p}+1\right),
\end{align*}
which completes the proof of \eqref{uniest3} by
the reflexivity of $\mcL^{2p}(\Omega,\mbP;H_0^2)$.
\end{proof}

Note that $Pr_{2}(L^2(\T^d))$ is a separable complete metric space endowed with
the following Wasserstein distance
\[
W_{2}(\mu_{1},\mu_{2}):=\inf_{\pi\in\mathcal C(\mu_{1},\mu_{2})}
\left(\int_{L^2(\T^d)\times L^2(\T^d)}\|x-y\|^{2}\pi(\d x,\d y)\right)^{\frac{1}{2}}
\]
for all $\mu_{1},\mu_{2}\in Pr_{2}(L^2(\T^d))$. Here
$\mathcal C(\mu_{1},\mu_{2})$ is the set of all couplings for
$\mu_{1}$ and $\mu_{2}$. Now we investigate continuous dependence
on initial values and coefficients for solutions to \eqref{maineq1}.
Recall that a sequence $\{\mu_n\}\subset Pr(L^2(\T^d))$ is said to
{\em weakly converge} to $\mu$ if
$\int f \d\mu_n\to \int f \d\mu$ for all $f\in C_b(L^2(\T^d))$,
and the Wasserstein distance $W_{2}$ metrizes weak convergence.

\begin{lemma}\label{conpth}
Let $\inf\{\gamma_n(t),\gamma(t):n\geq1,t\in\R\}\geq\frac{|\beta|}{\sqrt3}$.
Suppose that $f_{n}$, $f$, $g_{n}$, $g$ satisfy {\rm(H$_f^1$)} and {\rm(H$_g^1$)}
with the same constants. Fix $s\in\R$. Let $u_{n}$ be the solution of
\begin{equation*}
 \left\{
   \begin{aligned}
   &\ \d u_n(t)=\left[(1+i\alpha )\Delta u_n(t)-(\gamma_n(t)+i\beta)|u_n(t)|^{2}u_n(t)
     +f_{n}(t,u_n(t))\right]\d t + g_{n}(t,u_n(t))\d W(t)\\
  &\ u_n(s)=\zeta^{s}_{n}
   \end{aligned}
   \right.
  \end{equation*}
and $u$ the solution of
\begin{equation*}
  \left\{
   \begin{aligned}
   &\ \d u(t)=\left[(1+i\alpha )\Delta u(t)-(\gamma(t)+i\beta)|u(t)|^{2}u(t)
     +f(t,u(t))\right]\d t + g(t,u(t))\d W(t)\\
  &\ u(s)=\zeta^{s}.
   \end{aligned}
   \right.
  \end{equation*}
Assume further that
\begin{enumerate}
\item $\lim\limits_{n\rightarrow\infty}\gamma_{n}(t)=\gamma(t)$
for all $t\in\R$;
\item $\lim\limits_{n\rightarrow\infty}f_{n}(t,x)
=f(t,x)$ for all $t\in\R$ and $x\in L^{2}(\T^{d})$;
\item $\lim\limits_{n\rightarrow\infty}g_{n}(t,x)
=g(t,x)$ for all $t\in\R$ and $x\in L^{2}(\T^{d})$.
\end{enumerate}
If
  $\lim\limits_{n\rightarrow\infty}W_{2}
  (\mathscr{L}(\zeta_{n}^{s}),\mathscr{L}(\zeta^{s}))=0$, then
  for all $T>0$
  $$
  \lim\limits_{n\rightarrow\infty}\sup\limits_{s\leq t\leq s+T}
  W_{2}(\mathscr{L}(u_{n}(t)),\mathscr{L}(u(t)))=0.
  $$
\end{lemma}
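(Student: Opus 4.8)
The plan is to reduce the statement, via a Skorokhod‑type argument, to an $\mcL^2$-estimate for two solutions driven by the \emph{same} Wiener process, and then to run an It\^o--Gronwall argument in which the dissipativity of the cubic term (Lemma~\ref{Disscubic}) absorbs its genuinely nonlinear part, the Lipschitz conditions {\rm(H$_f^1$)}, {\rm(H$_g^1$)} absorb the rest up to a Gronwall term, and the coefficient differences survive only as ``error'' terms. I expect the main obstacle to be the residual cubic term $-2(\gamma_n-\gamma)\la|u|^2u,u_n-u\ra$, which is forced by the time dependence of $\gamma$ (exactly the feature not covered by the globally Lipschitz framework of \cite{CLAM2021}): it cannot be closed in $\mcL^2$ alone and must be controlled through a priori $L^4(\T^d)$ space-time bounds that are uniform in $n$, which is where the dissipativity assumption $\gamma_n\geq|\beta|/\sqrt3$ and Lemma~\ref{uniest} come in.

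\emph{Step 1: reduction to strongly convergent data.} Since $W_2$-convergence on $Pr_2(L^2(\T^d))$ is weak convergence together with convergence of second moments, $\mathscr L(\zeta_n^s)\to\mathscr L(\zeta^s)$ weakly and $\mbE\|\zeta_n^s\|^2\to\mbE\|\zeta^s\|^2$. By the Skorokhod representation theorem there are random variables $\tilde\zeta_n,\tilde\zeta$ on one probability space with $\mathscr L(\tilde\zeta_n)=\mathscr L(\zeta_n^s)$, $\mathscr L(\tilde\zeta)=\mathscr L(\zeta^s)$, $\tilde\zeta_n\to\tilde\zeta$ a.s., and hence $\mbE\|\tilde\zeta_n-\tilde\zeta\|^2\to0$. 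Enlarging this space to also carry $W$ (independent of all the data), let $u_n$ be the solution of the $n$-th equation with datum $\tilde\zeta_n$, and $u$ that of the limit equation with datum $\tilde\zeta$, both driven by $W$. By uniqueness (Theorem~\ref{wellD}) these have the same laws as the solutions in the statement, and since $W_2(\mathscr L(u_n(t)),\mathscr L(u(t)))^2\leq\mbE\|u_n(t)-u(t)\|^2$, it suffices to prove $\sup_{s\leq t\leq s+T}\mbE\|u_n(t)-u(t)\|^2\to0$. The crucial point is that $u$ is now independent of $n$.

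\emph{Step 2: the difference estimate.} Put $v_n:=u_n-u$. Applying It\^o's formula to $\|v_n(t)\|^2$, taking expectations after the usual localization (using \eqref{uniest1} with $p=1$ to pass to the limit), and noting $2\la(1+i\alpha)\Delta v_n,v_n\ra=-2\|v_n\|_1^2\leq0$, one gets
\[
\mbE\|v_n(t)\|^2\leq\mbE\|\tilde\zeta_n-\tilde\zeta\|^2+\mbE\int_s^t\big(I^{\mathrm c}_n+I^f_n+I^g_n\big)\,\d\sigma .
\]
Here the cubic part is split as $I^{\mathrm c}_n=-2\la(\gamma_n+i\beta)(|u_n|^2u_n-|u|^2u),v_n\ra-2(\gamma_n-\gamma)\la|u|^2u,v_n\ra$; the first summand is $\leq0$ by Lemma~\ref{Disscubic} (since $\gamma_n\geq|\beta|/\sqrt3$), and the second is $\leq2|\gamma_n-\gamma|\,\|u\|_{L^4(\T^d)}^3\big(\|u_n\|_{L^4(\T^d)}+\|u\|_{L^4(\T^d)}\big)$ by H\"older. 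For $I^f_n$ and $I^g_n$ one inserts $\pm f_n(\sigma,u)$, $\pm g_n(\sigma,u)$ and uses {\rm(H$_f^1$)}, {\rm(H$_g^1$)} and Young to obtain $I^f_n+I^g_n\leq C\|v_n\|^2+\|f_n(\sigma,u)-f(\sigma,u)\|^2+2\|g_n(\sigma,u)-g(\sigma,u)\|_{L_2(U,L^2(\T^d))}^2$. Altogether
\[
\mbE\|v_n(t)\|^2\leq\mbE\|\tilde\zeta_n-\tilde\zeta\|^2+C\mbE\int_s^t\|v_n(\sigma)\|^2\,\d\sigma+\mbE\int_s^{s+T}\rho_n(\sigma)\,\d\sigma ,
\]
where $\rho_n\geq0$ collects the error terms above.

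\emph{Step 3: the errors vanish; conclusion.} By (i)--(iii), $\|f_n(\sigma,u)-f(\sigma,u)\|\to0$, $\|g_n(\sigma,u)-g(\sigma,u)\|_{L_2(U,L^2(\T^d))}\to0$ and $|\gamma_n(\sigma)-\gamma(\sigma)|\to0$ for every $(\omega,\sigma)$; by {\rm(H$_f^1$)}, {\rm(H$_g^1$)} one has $\|f_n(\sigma,u)-f(\sigma,u)\|^2+\|g_n(\sigma,u)-g(\sigma,u)\|_{L_2(U,L^2(\T^d))}^2\leq C(1+\|u(\sigma)\|^2)$, which is integrable over $[s,s+T]\times\Omega$ by \eqref{uniest1}; since also $\|u\|_{L^4(\T^d)}^4$ is integrable (Definition~\ref{defsol}) and $|\gamma_n-\gamma|$ is bounded, dominated convergence disposes of the $f$-, $g$- and $|\gamma_n-\gamma|\,\|u\|_{L^4(\T^d)}^4$-contributions to $\rho_n$. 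For the remaining contribution, H\"older gives
\[
\mbE\int_s^{s+T}|\gamma_n-\gamma|\,\|u\|_{L^4(\T^d)}^3\|u_n\|_{L^4(\T^d)}\,\d\sigma\leq\Big(\mbE\int_s^{s+T}|\gamma_n-\gamma|^{4/3}\|u\|_{L^4(\T^d)}^4\,\d\sigma\Big)^{3/4}\Big(\mbE\int_s^{s+T}\|u_n\|_{L^4(\T^d)}^4\,\d\sigma\Big)^{1/4},
\]
where the first factor tends to $0$ by dominated convergence ($u$ fixed), and the second is bounded uniformly in $n$ because \eqref{uniest1} with $p=1$, together with $\gamma_n\geq|\beta|/\sqrt3$ and $\sup_n\mbE\|\tilde\zeta_n\|^2<\infty$, yields $\sup_n\mbE\int_s^{s+T}\|u_n(\sigma)\|_{L^4(\T^d)}^4\,\d\sigma<\infty$. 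Hence $\mbE\int_s^{s+T}\rho_n(\sigma)\,\d\sigma\to0$, and Gronwall's lemma gives $\sup_{s\leq t\leq s+T}\mbE\|u_n(t)-u(t)\|^2\leq e^{CT}\big(\mbE\|\tilde\zeta_n-\tilde\zeta\|^2+\mbE\int_s^{s+T}\rho_n\,\d\sigma\big)\to0$, as required. The delicate point throughout is precisely the pairing in the last display: the time dependence of $\gamma$ produces a cubic residual that is not $\mcL^2$-controllable, and it is tamed only by keeping the fixed limit solution $u$ in the ``vanishing'' factor (thanks to Step~1) while using the dissipativity‑driven uniform $L^4(\T^d)$ bound from Lemma~\ref{uniest} for the $n$-dependent factor.
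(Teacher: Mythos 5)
Your proof is correct and, in outline, is the scheme the paper intends: its ``proof'' of Lemma \ref{conpth} is a one-line reference to Theorem 3.1 of \cite{CML2020}, which is precisely the coupling--It\^o--Gronwall argument you carry out (realize the initial data on a common space via Skorokhod so that both solutions are driven by the same $W$ and the limit solution $u$ is a \emph{fixed} process, bound $W_2^2$ by $\mbE\|u_n-u\|^2$ through uniqueness in law, and close the estimate with (H$_f^1$), (H$_g^1$) and the dissipativity of Lemma \ref{Disscubic}). What you add, and what the cited result does not literally cover, is the time-dependent cubic coefficient: the residual $-2(\gamma_n-\gamma)\la|u|^2u,u_n-u\ra$ falls outside the globally Lipschitz framework of \cite{CML2020}, and your decision to keep the cube on the fixed limit $u$ (so that dominated convergence applies to the factor carrying $\gamma_n-\gamma$) while charging the $n$-dependent factor to a uniform space-time $L^4$ bound is exactly the supplement needed; it parallels how the paper itself treats the cubic term in Theorem \ref{avethf} via \eqref{uniest1} and $L^{4/3}$--$L^4$ duality.

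Two small caveats. In Step 1, the passage from a.s.\ convergence of the Skorokhod representatives to $\mbE\|\tilde\zeta_n-\tilde\zeta\|^2\to0$ uses that $W_2$-convergence also gives convergence of second moments, hence uniform integrability of $\|\tilde\zeta_n\|^2$; this deserves a sentence but is standard. More substantively, your uniform bound $\sup_n\mbE\int_s^{s+T}\|u_n(\sigma)\|_{L^4(\T^d)}^4\,\d\sigma<\infty$ is extracted from \eqref{uniest1} by dividing by $\inf_t\gamma_n(t)\geq|\beta|/\sqrt3$, so it genuinely requires $\beta\neq0$ (equivalently a strictly positive lower bound on the $\gamma_n$); in the degenerate case $\beta=0$ with $\inf\gamma_n=0$ the cross term $\|u\|_{L^4(\T^d)}^3\|u_n\|_{L^4(\T^d)}$ is not controlled by your H\"older step, and one would need either slightly higher moments of the data or a localization as in the proof of Theorem \ref{avethf}. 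This is an edge case the lemma's hypotheses do not formally exclude, but it does not arise in the situations where the lemma is invoked in the paper.
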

\begin{proof}
The proof is similar to that of Theorem 3.1 in \cite{CML2020}.
\end{proof}

\section{The first Bogolyubov theorem}\label{firstBT}
Consider the following stochastic CGL equations with highly
oscillating components
\begin{align}\label{eqLSDE1.1}
\d u^{\veps}(t)
&
=\left[(1+i\alpha )\Delta u^{\veps}(t)
-(\gamma(t/\veps)+i\beta)|u^{\veps}(t)|^{2}u^{\veps}(t)
+f\left(t/\veps,u^{\veps}(t)\right)
\right]\d t\\\nonumber
&\quad
+g\left(t/\veps,u^{\veps}(t)\right)\d W(t),\quad t\in\R,
\end{align}
where $\gamma\in C_b(\R,\R_+)$,
$f\in$ $ C(\mathbb R\times L^{2}(\T^{d}),L^{2}(\T^{d}))$,
$g\in C(\R\times L^{2}(\T^{d}),L_{2}(U,L^{2}(\T^{d})))$ and $0<\veps\ll1$.

We employ $\Psi$ to denote the space of all decreasing, positive
bounded functions $\delta_{1}:\R_{+}\rightarrow\R_{+}$ with
$\lim\limits_{t\rightarrow+\infty}\delta_{1}(t)=0$.
Below we need additional conditions.
\begin{enumerate}
\item[\textbf{(G$_\gamma$)}]
There exists a constant $\bar{\gamma}$ such that
\begin{equation*}
\frac{1}{T}\left|\int_t^{t+T}\left(\gamma(s)-\bar{\gamma}
\right)\d s\right|\leq\delta_\gamma(T)
\end{equation*}
for all $T>0$ and $t\in\R$.
\item[\textbf{(G$_f$)}]
There exist functions $\delta_{f}\in \Psi$ and $\bar{f}\in
C(L^{2}(\T^{d}),L^{2}(\T^{d}))$ such that
\begin{equation}\label{eqG3}
\frac{1}{T}\left\|\int_{t}^{t+T}\left(f(s,x)-\bar{f}(x)\right)\d s\right\|
\le\delta_{f}(T)(1+\|x\|)\nonumber
\end{equation}
for any $T>0$, $t\in\mathbb R$ and $x\in L^2(\T^d)$.
 \item[\textbf{(G$_g^1$)}]
There exist functions $\delta_{g}\in \Psi$ and $\bar{g}\in
C(L^{2}(\T^{d}),L_{2}(U,L^{2}(\T^{d})))$ such that
\begin{equation}\label{eqG4}
\frac{1}{T}\int_{t}^{t+T}\left\|g(s,x)-
\bar{g}(x)\right\|_{L_{2}(U,L^{2}(\T^{d}))}^{2}
\d s\le\delta_{g}(T)(1+\|x\|^2)\nonumber
\end{equation}
for any $T>0$, $t\in\mathbb R$ and $x\in L^2(\T^d)$.
 \item[\textbf{(G$_g^2$)}]
There exist functions $\delta_{g}\in \Psi$ and $\bar{g}\in
C(H_0^1,L_{2}(U,H_0^1))$ such that
\begin{equation}\label{eqG4}
\frac{1}{T}\int_{t}^{t+T}\left\|g(s,x)-
\bar{g}(x)\right\|_{L_{2}(U,H_0^1)}^{2}
\d s\le\delta_{g}(T)(1+\|x\|_1^2)\nonumber
\end{equation}
for any $T>0$, $t\in\mathbb R$ and $x\in H_0^1$.
\end{enumerate}

Set $\gamma_\veps(t):=\gamma(\frac{t}{\veps})$,
$f_{\veps}(t,x):=f(\frac{t}{\veps},x)$ and
$g_{\veps}(t,x):=g(\frac{t}{\veps},x)$
for $t\in\R$, $x\in L^{2}(\T^{d})$ and $\veps \in(0,1]$.
Equation \eqref{eqLSDE1.1} can be written as
\begin{align}\label{eqG2.1}
\d u^{\veps}(t)=
&
\left[(1+i\alpha )\Delta u^{\veps}(t)
-(\gamma_\veps(t)+i\beta)|u^{\veps}(t)|^{2}u^{\veps}(t)
+f_{\veps}(t,u^{\veps}(t))\right]\d t\\\nonumber
&
+g_{\veps}(t,u^{\veps}(t))\d W(t),~ t\in\R.
\end{align}
Along with equations \eqref{eqLSDE1.1}--\eqref{eqG2.1} we
consider the following averaged equation
\begin{equation}\label{eqG5_1}
\d \bar{u}(t)=\left[(1+i\alpha )\Delta \bar{u}(t)-(\bar{\gamma}+i\beta)|\bar{u}(t)|^{2}\bar{u}(t)
+\bar{f}(\bar{u}(t))\right]\d t
+\bar{g}(\bar{u}(t))\d W(t),~ t\in\R.
\end{equation}

\begin{remark}\label{reuniest}\rm
If $f$ and $g$ satisfy {\rm(H$_f^1$)}, {\rm(H$_g^1$)}--{\rm(H$_g^2$)}, {\rm(G$_f^1$)}
and {\rm(G$_g^1$)}--{\rm(G$_g^2$)}, then $\bar{f}$ and $\bar{g}$ also satisfy
{\rm(H$_f^1$)}, {\rm(H$_g^1$)} and {\rm(H$_g^2$)} with the same constants.
Therefore, under the same conditions, \eqref{uniest1} holds uniformly for
$0<\veps\leq1$, $\bar{f}$ and $\bar{g}$.
\end{remark}

For a given process $\varphi$, we define a step process $\widetilde{\varphi}$
such that $\widetilde{\varphi}(\sigma)=\varphi(s+k\delta)$
for any $\sigma\in[s+k\delta,s+(k+1)\delta)$.
Employing the technique of time discretization
(see e.g. \cite{CLAM2021, HLL2021}), we have
the following estimates of solutions for the integral of time
increment on $L^2(\T^d)$ and $L^4(\T^d)$. Note that estimates
on $L^4(\T^d)$ play an important role to
deal with the cubic term involved in averaging.
\begin{lemma}\label{orstes}
Let $\gamma(\cdot)\geq\frac{|\beta|}{\sqrt3}$.
Assume that  {\rm(H$_f^1$)}, {\rm(H$_g^1$)}, {\rm(G$_\gamma$)}, {\rm(G$_f$)} and {\rm(G$_g^1$)} hold.
Let $u^{\veps}(t,s,\zeta_s^\veps)$ be the solution of \eqref{eqG2.1}
with the initial condition
$u^{\veps}(s,s,\zeta_s^\veps)=\zeta_{s}^{\veps}$ and $\bar{u}(t,s,\zeta_s)$ the
solution of \eqref{eqG5_1} with the initial condition
$\bar{u}(s,s,\zeta_s)=\zeta_{s}$. Then we have
\begin{equation}\label{orsteseq}
\E\int_{s}^{s+T}\|u^{\veps}(\sigma,s,\zeta_s^\veps)
-\widetilde{u}^{\veps}(\sigma,s,\zeta_s^\veps)
\|^{2}\d\sigma\leq
C_{T}(1+\E\|\zeta_{s}^{\veps}\|^{2})\delta^{\frac{1}{2}}
\end{equation}
and
\begin{equation}\label{avesteseq}
\E\int_{s}^{s+T}\|\bar{u}(\sigma,s,\zeta_s)-\widetilde{u}(\sigma,s,\zeta_s)\|^{2}\d\sigma
\leq C_{T}(1+\E\|\zeta_{s}\|^{2})\delta^{\frac{1}{2}}
\end{equation}
for any $s\in\R$ and $T>0$, where
$\widetilde{u}(\cdot,s,\zeta_s):=\widetilde{\bar{u}}(\cdot,s,\zeta_s)$.

Moreover, assume that {\rm(H$_g^2$)} and {\rm(G$_g^2$)} hold,
$\zeta_s^\veps,\zeta_s\in \mcL^4(\Omega,\P;H_0^1)$.
Fix $R\in\R_+$. Define
\[
\tau_R^\veps:=\inf\{s\leq t\leq s+T: \|u^\veps(t,s,\zeta_s^\veps)\|+\|\bar{u}(t,s,\zeta_s)\|>R\}.
\]
Then we have
\begin{align}\label{Horsteseq}
\E\int_{s}^{(s+T)\wedge\tau_R^\veps}\|u^{\veps}(\sigma,s,\zeta_s^\veps)
-\widetilde{u}^{\veps}(\sigma,s,\zeta_s^\veps)
\|_{L^4(\T^d)}^{4}\d\sigma\leq R^{\frac12}C_T\left(1+\mbE\|\zeta_s^\veps\|_{1}^4\right)
\delta^{\frac{1}{8}}
\end{align}
and
\begin{align}\label{Hasteseq}
\E\int_{s}^{(s+T)\wedge\tau_R^\veps}\|\bar{u}(\sigma,s,\zeta_s^\veps)
-\widetilde{u}(\sigma,s,\zeta_s^\veps)\|_{L^4(\T^d)}^{4}\d\sigma
\leq R^{\frac12}C_T\left(1+\mbE\|\zeta_s\|_{1}^4\right)
\delta^{\frac{1}{8}}.
\end{align}
\end{lemma}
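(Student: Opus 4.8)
The plan is to run the standard time--discretization argument on each grid interval, dispose of the lower--order terms by brute force, and spend the real effort on the two ``rough'' contributions --- the dissipation $(1+i\alpha)\Delta u^{\veps}$ and the cubic term $(\gamma_\veps+i\beta)|u^{\veps}|^2u^{\veps}$ --- whose $L^2(\T^d)$--norms are \emph{not} controlled by Lemma~\ref{uniest} at the level $p=1$. First I would note that, by Remark~\ref{reuniest} together with $\bar\gamma\ge|\beta|/\sqrt3$ (which follows from $\gamma(\cdot)\ge|\beta|/\sqrt3$ and (G$_\gamma$)), the averaged coefficients obey the same structural bounds as $\gamma,f,g$, so \eqref{avesteseq} (resp.\ \eqref{Hasteseq}) is just \eqref{orsteseq} (resp.\ \eqref{Horsteseq}) applied to \eqref{eqG5_1}; hence it suffices to treat $u^{\veps}$. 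Partition $[s,s+T]$ by $\{s+k\delta\}$ and, for $\sigma\in[s+k\delta,s+(k+1)\delta)$, write $u^{\veps}(\sigma)-\widetilde u^{\veps}(\sigma)=u^{\veps}(\sigma)-u^{\veps}(s+k\delta)=\sum_{j=1}^4 J_j^k(\sigma)$, the four summands being the integrals over $[s+k\delta,\sigma]$ of $(1+i\alpha)\Delta u^{\veps}$, of $-(\gamma_\veps+i\beta)|u^{\veps}|^2u^{\veps}$, of $f_\veps(\cdot,u^{\veps})$, and the stochastic integral $\int_{s+k\delta}^\sigma g_\veps(\cdot,u^{\veps})\,\d W$.

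For \eqref{orsteseq}: the $f_\veps$ and stochastic pieces are routine --- by (H$_f^1$), (H$_g^1$), Cauchy--Schwarz/It\^o isometry and Lemma~\ref{uniest} with $p=1$ one gets $\mathbb E\|J_3^k(\sigma)\|^2\le C\delta^2(1+\mathbb E\|\zeta_s^{\veps}\|^2)$ and $\mathbb E\|J_4^k(\sigma)\|^2\le C\delta(1+\mathbb E\|\zeta_s^{\veps}\|^2)$, which after integrating in $\sigma$ and summing the $\lesssim T/\delta$ intervals give contributions of order $\delta$. For the rough pieces I would apply It\^o's formula to $\sigma\mapsto\|u^{\veps}(\sigma)-u^{\veps}(s+k\delta)\|^2$ (rigorously at the Galerkin level, then pass to the limit) and use two cancellations: the pairing $\langle u^{\veps}(\sigma)-u^{\veps}(s+k\delta),(1+i\alpha)\Delta u^{\veps}(\sigma)\rangle$ yields $-\|u^{\veps}(\sigma)\|_1^2\le0$ plus a remainder, and, since $\gamma_\veps(\cdot)\ge|\beta|/\sqrt3$, Lemma~\ref{Disscubic} turns the cubic self--interaction $-\langle u^{\veps}(\sigma)-u^{\veps}(s+k\delta),(\gamma_\veps+i\beta)|u^{\veps}(\sigma)|^2u^{\veps}(\sigma)\rangle$ into $-\langle u^{\veps}(\sigma)-u^{\veps}(s+k\delta),(\gamma_\veps+i\beta)|u^{\veps}(s+k\delta)|^2u^{\veps}(s+k\delta)\rangle$ plus a sign--definite term to be dropped. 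The surviving remainders are linear and cubic expressions in $u^{\veps}(\sigma)$ and in $u^{\veps}$ at the grid point; after the Sobolev embedding $H_0^1\hookrightarrow L^6(\T^d)$ ($d\le3$) and an interpolation/Young estimate these are dominated by $\int_{s+k\delta}^{s+(k+1)\delta}\|u^{\veps}\|_1^2$, by $\sup_{[s,s+T]}\|u^{\veps}\|^2$, and by $\|u^{\veps}(s+k\delta)\|_{L^4(\T^d)}^4$, which I control via Lemma~\ref{uniest} ($p=1$) and a parabolic smoothing bound $\mathbb E\|u^{\veps}(t)\|_{L^4(\T^d)}^4\le C_T(1+\mathbb E\|\zeta_s^{\veps}\|^2)/(t-s)$ (the first interval $[s,s+\delta)$, where $\zeta_s^{\veps}$ need not lie in $H_0^1$, is handled separately by the trivial bound $\|u^{\veps}(\sigma)-\zeta_s^{\veps}\|\le\|u^{\veps}(\sigma)\|+\|\zeta_s^{\veps}\|$). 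Summing over $k$, the worst term is $O(\delta\log(1/\delta))\le C\delta^{1/2}$, giving \eqref{orsteseq}.

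For \eqref{Horsteseq} the argument is clean once \eqref{orsteseq} is in hand. Now $\zeta_s^{\veps}\in\mcL^4(\Omega,\P;H_0^1)$ and (H$_g^2$),(G$_g^2$) hold, so Lemma~\ref{uniest01}(i) with $p=2$ gives $\sup_{s\le t\le s+T}\mathbb E\|u^{\veps}(t)\|_1^4\le C_T(1+\mathbb E\|\zeta_s^{\veps}\|_1^4)$, hence $\mathbb E\int_s^{s+T}\|u^{\veps}(\sigma)-\widetilde u^{\veps}(\sigma)\|_1^4\,\d\sigma\le C_T(1+\mathbb E\|\zeta_s^{\veps}\|_1^4)$. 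By H\"older and $H_0^1\hookrightarrow L^6(\T^d)$ one has $\|w\|_{L^4(\T^d)}^4\le C\|w\|\,\|w\|_1^3$ for $d\le3$. On the stopped interval the grid points satisfy $s+k\delta\le\sigma<\tau_R^{\veps}$, so $\|u^{\veps}(s+k\delta)\|\le R$ and $\|u^{\veps}(\sigma)\|\le R$, whence $\|u^{\veps}(\sigma)-\widetilde u^{\veps}(\sigma)\|\le2R$ there; extracting $(2R)^{1/2}$ from one factor of $\|w\|$ and applying H\"older in $(\omega,\sigma)$ with exponents $4$ and $4/3$,
\[
\mathbb E\int_s^{(s+T)\wedge\tau_R^{\veps}}\|u^{\veps}(\sigma)-\widetilde u^{\veps}(\sigma)\|_{L^4(\T^d)}^4\,\d\sigma
\le C(2R)^{1/2}\Bigl(\mathbb E\int_s^{s+T}\|u^{\veps}-\widetilde u^{\veps}\|^2\Bigr)^{1/4}\Bigl(\mathbb E\int_s^{s+T}\|u^{\veps}-\widetilde u^{\veps}\|_1^4\Bigr)^{3/4}.
\]
Inserting \eqref{orsteseq} and the $H_0^1$--bound, and using $\|\zeta_s^{\veps}\|\le\lambda_*^{-1/2}\|\zeta_s^{\veps}\|_1$ to absorb the $L^2$--moment, the right side is $\le R^{1/2}C_T(1+\mathbb E\|\zeta_s^{\veps}\|_1^4)\delta^{1/8}$, which is \eqref{Horsteseq}; \eqref{Hasteseq} follows identically for $\bar u$.

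The step I expect to be the genuine obstacle is the cubic term in \eqref{orsteseq}: since the datum is merely in $L^2(\T^d)$ only the first--order a priori estimates are available, and $|u^{\veps}|^2u^{\veps}$ is superlinear, so any brute--force bound of $\|J_2^k(\sigma)\|^2$ costs $\mathbb E\int\|u^{\veps}\|_{L^6(\T^d)}^6$, which is out of reach. The escape --- monotonicity of Lemma~\ref{Disscubic} to shift the cubic onto the low--moment grid value, combined with parabolic smoothing of the $L^4$--norm and the merely logarithmic growth of $\sum_k 1/k$ --- is precisely what forces the weaker rate $\delta^{1/2}$ (rather than $\delta$) and is the technically delicate point.
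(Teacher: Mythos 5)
Your treatment of \eqref{Horsteseq}--\eqref{Hasteseq} (the interpolation $\|w\|_{L^4(\T^d)}^4\le C\|w\|\,\|w\|_1^3$, using the cutoff $\tau_R^\veps$ to extract $R^{1/2}$, H\"older with exponents $4$ and $4/3$, and Lemma \ref{uniest01} with $p=2$) is exactly the paper's argument and is fine conditional on \eqref{orsteseq}. You also correctly identify the two essential ingredients for the cubic term: the monotonicity of Lemma \ref{Disscubic} and the fact that the dissipative term in \eqref{uniest1} with $p=1$ gives an $L^4$ budget $\E\int_s^{s+T}\|u^\veps\|_{L^4(\T^d)}^4\,\d\tau\le C_T(1+\E\|\zeta_s^\veps\|^2)$ from merely $\mcL^2$ data. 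However, there is a genuine gap in your proof of \eqref{orsteseq}.

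After applying It\^o's formula to $\|u^\veps(\sigma)-u^\veps(s+k\delta)\|^2$ on $[s+k\delta,\sigma]$, your surviving remainders contain $\|u^\veps(s+k\delta)\|_1^2$ and $\|u^\veps(s+k\delta)\|_{L^4(\T^d)}^4$ evaluated at the \emph{fixed} grid point. Integrating in $\sigma$ and summing over $k$ yields $\delta^2\sum_k\E\|u^\veps(s+k\delta)\|_{L^4(\T^d)}^4$ (and the analogous $H^1$ sum), and a Riemann sum over grid points is not controlled by the time-integrated bound, which is the only information available under (H$_f^1$), (H$_g^1$) with $\zeta_s^\veps\in\mcL^2(\Omega,\P;L^2(\T^d))$. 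You patch this with the pointwise smoothing estimate $\E\|u^\veps(t)\|_{L^4(\T^d)}^4\le C_T(1+\E\|\zeta_s^\veps\|^2)/(t-s)$, but this is asserted, not proved: it is a fourth-moment bound deduced from second-moment data, it appears nowhere in the paper, and establishing it would require a separate weighted energy argument (while the corresponding $H^1$ smoothing needed for the $\|u^\veps(s+k\delta)\|_1^2$ term would in addition want (H$_g^2$), which is not assumed in the first half of the lemma). The paper sidesteps the problem by a different decomposition: it writes $\|u^\veps(\sigma)-\widetilde u^\veps(\sigma)\|^2\le 2\|u^\veps(\sigma)-u^\veps(\sigma-\delta)\|^2+2\|u^\veps(\sigma-\delta)-u^\veps(s+k\delta)\|^2$, so that in each It\^o expansion the frozen reference time is the \emph{moving} point $\sigma-\delta$; all the dangerous norms then sit under a $\d\sigma$ integral and reduce, after Fubini and a change of variables, to the available time-integrated estimates (this is where the rate $\delta^{1/2}$, via Cauchy--Schwarz on the sum of the stochastic contributions, actually comes from). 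Either adopt that decomposition or supply a genuine proof of your smoothing bound; as written, the argument for \eqref{orsteseq} does not close.
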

\begin{proof}
We only need to prove the estimates \eqref{orsteseq} and \eqref{Horsteseq}
when $s=0$; \eqref{avesteseq} and \eqref{Hasteseq} are similar.
Set $u_\veps(\sigma):=u^{\veps}(\sigma,0,\zeta_0^\veps)$
and $T(\delta):=\left[\frac{T}{\delta}\right]$.
It follows from \eqref{uniest1} and Remark \ref{reuniest} that
\begin{align}\label{orsteseq1}
&
\E\int_0^{T}\|u^{\veps}(\sigma)
-\widetilde{u}^{\veps}(\sigma)\|^{2}\d\sigma\\\nonumber
&
=\E\int_0^{\delta}\|u^{\veps}(\sigma)
-\zeta_{0}^{\veps}\|^{2}\d\sigma+\E\sum_{k=1}^{T(\delta)-1}
\int_{k\delta}^{(k+1)\delta}\|u^{\veps}(\sigma)
-u^{\veps}(k\delta)\|^{2}\d\sigma\\\nonumber
&\quad
+\E\int_{T(\delta)\delta}^{T}\|u^{\veps}(\sigma)
-u^{\veps}(T(\delta)\delta)\|^{2}\d\sigma\\\nonumber
&
\leq C_{T}\left(1+\E\|\zeta_{0}^{\veps}\|^{2}\right)\delta
+2\E\sum_{k=1}^{T(\delta)-1}\int_{k\delta}^{(k+1)\delta}
\|u^{\veps}(\sigma)-u^{\veps}(\sigma-\delta)\|^{2}
\d\sigma\\\nonumber
&\quad
+2\E\sum_{k=1}^{T(\delta)-1}\int_{k\delta}^{(k+1)\delta}
\|u^{\veps}(\sigma-\delta)-u^{\veps}(k\delta)\|^{2}
\d\sigma\\\nonumber
&
=:C_{T}\left(1+\E\|\zeta_{0}^{\veps}\|^{2}\right)\delta
+2\sum_{k=1}^{T(\delta)-1}\mathcal I_{k}
+2\sum_{k=1}^{T(\delta)-1}\mathcal J_{k}.
\end{align}

Given $1\leq k\leq T(\delta)-1$, for any
$\sigma\in[k\delta,(k+1)\delta)$, by It\^o's formula,
we get
\begin{align*}
&
\|u^{\veps}(\sigma)-u^{\veps}(\sigma-\delta)\|^{2}\\
&
=2\int_{\sigma-\delta}^{\sigma}\langle (1+i\alpha)\Delta u^{\veps}(\tau)
-(\gamma_\veps(\tau)+i\beta)|u^{\veps}(\tau)|^{2}u^{\veps}(\tau),
u^{\veps}(\tau)-u^{\veps}(\sigma-\delta)\rangle
\d\tau\\
&\quad
+2\int_{\sigma-\delta}^{\sigma}\langle
f_{\veps}(\tau,u^{\veps}(\tau)),
u^{\veps}(\tau)-u^{\veps}(\sigma-\delta)\rangle\d\tau
+\int_{\sigma-\delta}^{\sigma}\|g_{\veps}(\tau,
u^{\veps}(\tau))\|_{L_{2}(U,L^{2}(\T^{d}))}^{2}
\d\tau\\
&\quad
+2\int_{\sigma-\delta}^{\sigma}\langle u^{\veps}(\tau)
-u^{\veps}(\sigma-\delta),
g_{\veps}(\tau,u^{\veps}(\tau))
\d W(\tau)\rangle.
\end{align*}
Hence it follows from (H$_f^1$), (H$_g^1$), integration by parts and
Young's inequality that
\begin{align*}
&
\|u^{\veps}(\sigma)-u^{\veps}(\sigma-\delta)\|^{2}\\
&
\leq\int_{\sigma-\delta}^{\sigma}\bigg(
2\langle(1+i\alpha)\nabla u^{\veps}(\tau),
\nabla u^{\veps}(\sigma-\delta)\rangle
+2\langle(\gamma_\veps(\tau)+i\beta)|u^{\veps}(\tau)|^{2}u^{\veps}(\tau),
u^{\veps}(\sigma-\delta)\rangle\\
&\qquad
+2\langle f_{\veps}(\tau,u^{\veps}(\tau)),
u^{\veps}(\tau)-u^{\veps}(\sigma-\delta)\rangle
+2L_{g}^{2}\|u^{\veps}(\tau)\|^{2}+2K^{2}\bigg)\d\tau\\
&\quad
+2\int_{\sigma-\delta}^{\sigma}\langle u^{\veps}(\tau)
-u^{\veps}(\sigma-\delta),
g_{\veps}(\tau,u^{\veps}(\tau))
\d W(\tau)\rangle\\
&
\leq\int_{\sigma-\delta}^{\sigma}C\bigg(\|u^{\veps}(\tau)\|_{1}
\|u^{\veps}(\sigma-\delta)\|_{1}
+\|u^{\veps}(\tau)\|_{L^{4}(\T^{d})}^{3}
\|u^{\veps}(\sigma-\delta)\|_{L^{4}(\T^{d})}\\
&\qquad
+\|f_{\veps}(\tau,u^{\veps}(\tau))\|
\|u^{\veps}(\tau)-u^{\veps}(\sigma-\delta)\|
+\|u^{\veps}(\tau)\|^{2}+1\bigg)\d\tau\\
&\quad
+2\int_{\sigma-\delta}^{\sigma}\langle u^{\veps}(\tau)
-u^{\veps}(\sigma-\delta),g_{\veps}(\tau,u^{\veps}(\tau))
\d W(\tau)\rangle\\
&
\leq\int_{\sigma-\delta}^{\sigma}C\bigg(
\|u^{\veps}(\tau)\|_{1}^{2}
+\|u^{\veps}(\sigma-\delta)\|_{1}^{2}
+\|u^{\veps}(\tau)\|_{L^{4}(\T^{d})}^{4}\\
&\qquad
+\|u^{\veps}(\sigma-\delta)\|_{L^{4}(\T^{d})}^{4}
+\|u^{\veps}(\tau)\|^{2}
+\|u^{\veps}(\sigma-\delta)\|^{2}+1\bigg)\d\tau\\
&\quad
+2\int_{\sigma-\delta}^{\sigma}\langle u^{\veps}(\tau)
-u^{\veps}(\sigma-\delta),g_{\veps}(\tau,u^{\veps}(\tau))
\d W(\tau)\rangle.\\
\end{align*}
Then we have
\begin{align}\label{orsteseq3}
\mathcal I_{k}
&
:=\E\int_{k\delta}^{(k+1)\delta}\|u^{\veps}(\sigma)
-u^{\veps}(\sigma-\delta)\|^{2}\d\sigma\\\nonumber
&
\leq \E\int_{k\delta}^{(k+1)\delta}\bigg
\{\int_{\sigma-\delta}^{\sigma}C\bigg(
\|u^{\veps}(\tau)\|_{1}^{2}
+\|u^{\veps}(\sigma-\delta)\|_{1}^{2}
+\|u^{\veps}(\tau)\|_{L^{4}(\T^{d})}^{4}\\\nonumber
&\qquad
+\|u^{\veps}(\sigma-\delta)\|_{L^{4}(\T^{d})}^{4}
+\|u^{\veps}(\tau)\|^{2}
+\|u^{\veps}(\sigma-\delta)\|^{2}+1\bigg)\d\tau\\\nonumber
&\qquad
+2\int_{\sigma-\delta}^{\sigma}\langle u^{\veps}(\tau)
-u^{\veps}(\sigma-\delta),g_{\veps}(\tau,u^{\veps}(\tau))
\d W(\tau)\rangle\bigg\}\d\sigma
=:\mathcal I_{k}^{1}+\mathcal I_{k}^{2}.
\end{align}
For $\mathcal I_{k}^{1}$, by Fubini's theorem and a change of variable, we have
\begin{align}\label{orsteseq3.1}
\mathcal I_{k}^{1}
&
:=\E\int_{k\delta}^{(k+1)\delta}\bigg\{
\int_{\sigma-\delta}^{\sigma}C\bigg(
\|u^{\veps}(\tau)\|_{1}^{2}
+\|u^{\veps}(\sigma-\delta)\|_{1}^{2}
+\|u^{\veps}(\tau)\|_{L^{4}(\T^{d})}^{4}\\\nonumber
&\qquad
+\|u^{\veps}(\sigma-\delta)\|_{L^{4}(\T^{d})}^{4}
+\|u^{\veps}(\tau)\|^{2}
+\|u^{\veps}(\sigma-\delta)\|^{2}+1\bigg)\d\tau\bigg\}
\d\sigma\\\nonumber
&
=\E\int_{k\delta}^{(k+1)\delta}\int_{\sigma-\delta}^{\sigma}
C\left(\|u^{\veps}(\tau)\|_{1}^{2}
+\|u^{\veps}(\tau)\|_{L^{4}(\T^{d})}^{4}
+\|u^{\veps}(\tau)\|^{2}+1\right)\d\tau\d\sigma\\\nonumber
&\quad
+\E\int_{k\delta}^{(k+1)\delta}\delta
C\left(\|u^{\veps}(\sigma-\delta)\|_{1}^{2}
+\|u^{\veps}(\sigma-\delta)\|_{L^{4}(\T^{d})}^{4}
+\|u^{\veps}(\sigma-\delta)\|^{2}\right)\d\sigma\\\nonumber
&
\leq\delta C\mathbb E\int_{(k-1)\delta}^{(k+1)\delta}\left(
\|u^{\veps}(\tau)\|_{1}^{2}
+\|u^{\veps}(\tau)\|_{L^{4}(\T^{d})}^{4}
+\|u^{\veps}(\tau)\|^{2}+1\right)\d\tau.
\end{align}

Now we estimate $\mathcal I_{k}^{2}$. In view of Burkholder-Davis-Gundy
inequality, (H$_g^1$) and Young's inequality, we obtain
\begin{align*}
\mathcal I_{k}^{2}
&
:=2\E\int_{k\delta}^{(k+1)\delta}\int_{\sigma-\delta}^{\sigma}
\langle u^{\veps}(\tau)-u^{\veps}(\sigma-\delta),
g_{\veps}(\tau,u^{\veps}(\tau))
\d W(\tau)\rangle\d\sigma\\\nonumber
&
\leq C\int_{k\delta}^{(k+1)\delta}\E\left(\int_{\sigma-\delta}^{\sigma}
\|g_{\veps}(\tau,u^{\veps}(\tau))
\|_{L_{2}(U,L^{2}(\T^{d}))}^{2}
\|u^{\veps}(\tau)-u^{\veps}(\sigma-\delta)\|^{2}\d\tau
\right)^{\frac{1}{2}}\d\sigma\\\nonumber
&
\leq C\int_{k\delta}^{(k+1)\delta}\E\left(\int_{\sigma-\delta}^{\sigma}
\left(\|u^{\veps}(\tau)\|^{2}+1\right)
\|u^{\veps}(\tau)-u^{\veps}(\sigma-\delta)\|^{2}\d\tau
\right)^{\frac{1}{2}}\d\sigma\\\nonumber
&
\leq\delta^{\frac{1}{2}}C\left[\int_{k\delta}^{(k+1)\delta}
\E\int_{\sigma-\delta}^{\sigma}\left(\|u^{\veps}(\tau)\|^{4}
+\|u^{\veps}(\sigma-\delta)\|^{4}+1\right)\d\tau\d\sigma
\right]^{\frac{1}{2}},
\end{align*}
which implies that
\begin{align}\label{orsteseq4}
\mathcal I_{k}^{2}
&
\leq\delta^{\frac{1}{2}}C\left(\E\int_{(k-1)\delta}^{(k+1)\delta}
\delta\|u^{\veps}(\tau)\|^{4}\d\tau
+\delta^{2}\right)^{\frac{1}{2}}\\\nonumber
&
\leq\delta C\left(\E\int_{(k-1)\delta}^{(k+1)\delta}
\|u^{\veps}(\tau)\|_{L^{4}(\T^{d})}^{4}\d\tau
\right)^{\frac{1}{2}}+C\delta^{\frac{3}{2}}
\end{align}
by Fubini's theorem. Therefore \eqref{orsteseq3}--\eqref{orsteseq4} yield
\begin{align*}
\mathcal I_{k}
&
\leq \delta C
\E\int_{(k-1)\delta}^{(k+1)\delta}
\left(\|u^{\veps}(\tau)\|^{2}_{1}
+\|u^{\veps}(\tau)\|^{4}_{L^{4}(\T^{d})}
+\|u^{\veps}(\tau)\|^{2}+1\right)\d\tau\\
&\quad
+\delta C\left(\E\int_{(k-1)\delta}^{(k+1)\delta}
\|u^{\veps}(\tau)\|_{L^{4}(\T^{d})}^{4}\d\tau
\right)^{\frac{1}{2}}+C\delta^{\frac{3}{2}}.
\end{align*}
By Lemma \ref{uniest} and Remark \ref{reuniest}, we get
\begin{align}\label{orsteseq6}
2\sum_{k=1}^{T(\delta)-1}\mathcal I_{k}
&
\leq\delta C \E\int_0^{T}
\left(\|u^{\veps}(\tau)\|^{2}_{1}
+\|u^{\veps}(\tau)\|^{4}_{L^{4}(\T^{d})}
+\|u^{\veps}(\tau)\|^{2}+1\right)\d\tau\\\nonumber
&\quad
+\delta C\sum_{k=1}^{T(\delta)-1}\left(\E
\int_{(k-1)\delta}^{(k+1)\delta}
\|u^{\veps}(\tau)\|_{L^{4}(\T^{d})}^{4}\d\tau
\right)^{\frac{1}{2}}+C_{T}\delta^{\frac{1}{2}}\\\nonumber
&
\leq C_{T}\left(1+\E\|\zeta_{0}^{\veps}\|^{2}\right)
\delta^{\frac{1}{2}}
+\delta C\left(T(\delta)\right)^{\frac{1}{2}}\left(
\sum_{k=1}^{T(\delta)-1}\E\int_{(k-1)\delta}^{(k+1)\delta}
\|u^{\veps}(\tau)\|_{L^{4}(\T^{d})}^{4}\d\tau
\right)^{\frac{1}{2}}\\\nonumber
&
\leq C_{T}\left(1+\E\|\zeta_{0}^{\veps}\|^{2}\right)
\delta^{\frac{1}{2}}.
\end{align}

Similarly, we have
\begin{align}\label{orsteseq7}
2\sum_{k=1}^{T(\delta)-1}\mathcal J_{k}
&
\leq C_{T}\left(1+\E\|\zeta_{0}^{\veps}\|^{2}\right)
\delta^{\frac{1}{2}}.
\end{align}
Combining \eqref{orsteseq1}, \eqref{orsteseq6} and \eqref{orsteseq7},
we obtain
\begin{equation*}
\E\int_0^{T}\|u^{\veps}(\sigma)-\widetilde{u}^{\veps}
(\sigma)\|^{2}\d\sigma\leq C_{T}(1
+\E\|\zeta_{0}^{\veps}\|^{2})\delta^{\frac{1}{2}}.
\end{equation*}

Assume that {\rm(H$_g^2$)} and {\rm(G$_g^2$)} hold, and
$\zeta_s^\veps,\zeta_s\in \mcL^4(\Omega,\P;H_0^1)$.
Note that
\begin{equation}\label{intp}
\|v\|_{L^4(\T^d)}^4\leq C\|v\|\|v\|_{1}^3
\end{equation}
for all $v\in H_0^1$.
In view of \eqref{intp}, \eqref{uniest2} and \eqref{orsteseq}, we have
\begin{align*}
&
\E\int_{0}^{T\wedge\tau_R^\veps}\|u^{\veps}(\sigma)
-\widetilde{u}^{\veps}(\sigma)
\|_{L^4(\T^d)}^{4}\d\sigma\\\nonumber
&
\leq C\E\int_{0}^{T\wedge\tau_R^\veps}\|u^{\veps}(\sigma)
-\widetilde{u}^{\veps}(\sigma)\|
\|u^{\veps}(\sigma)-\widetilde{u}^{\veps}(\sigma)
\|_{1}^{3}\d\sigma\\\nonumber
&
\leq C\left(\E\int_{0}^{T\wedge\tau_R^\veps}\|u^{\veps}(\sigma)
-\widetilde{u}^{\veps}(\sigma)
\|^4\d\sigma\right)^{\frac14}
\left(\E\int_{0}^{T}\|u^{\veps}(\sigma)
-\widetilde{u}^{\veps}(\sigma)
\|_{1}^{4}\d\sigma\right)^{\frac34}\\\nonumber
&
\leq C_{T}(1+\E\|\zeta_{0}^{\veps}\|_{1}^4)^{\frac34}
\left(\E\int_{0}^{T\wedge\tau_R^\veps}\|u^{\veps}(\sigma)
-\widetilde{u}^{\veps}(\sigma)\|^4\d\sigma\right)^{\frac14}\\\nonumber
&
\leq R^{\frac12}C_{T}(1+\E\|\zeta_{0}^{\veps}\|_{1}^4)\delta^{\frac18}.
\end{align*}

\end{proof}

\begin{remark}\rm
Under conditions of Lemma \ref{orstes}, by \eqref{Horsteseq}, \eqref{Hasteseq} and \eqref{uniest1}
we have
\begin{equation}\label{esteqsp1}
\mbE\int_s^{(s+T)\wedge\tau_R^\veps}\|\widetilde{u}(\sigma)\|_{L^4(\T^d)}^4\d\sigma
\leq C_T\left(1+\mbE\|\zeta_s\|^4_{1}\right)
\end{equation}
and
\begin{equation}\label{esteqsp2}
\mbE\int_s^{(s+T)\wedge\tau_R^\veps}\|\widetilde{u}^\veps(\sigma)\|_{L^4(\T^d)}^4\d\sigma
\leq C_T\left(1+\mbE\|\zeta_s^\veps\|^4_{1}\right).
\end{equation}
\end{remark}

Now we establish the first Bogolyubov theorem for stochastic CGL equations.
\begin{theorem}\label{avethf}
Suppose that {\rm{(H$_f^1$)}}, {\rm(H$_g^1$)}, {\rm(H$_g^2$)},
{\rm(G$_\gamma$)}, {\rm(G$_f$)}, {\rm{(G$_g^1$)}} and {\rm{(G$_g^2$)}} hold. For any
$s\in\R$, let $u^{\veps}(t,s,\zeta_s^\veps),t\geq s$ be the solution of \eqref{eqG2.1}
with the initial condition $u^{\veps}(s,s,\zeta_s^\veps)=\zeta_s^\veps$
and $\bar{u}(t,s,\zeta_s),t\geq s$ the solution of \eqref{eqG5_1}
with the initial condition $\bar{u}(s,s,\zeta_s)=\zeta_s$.
Assume further that
\begin{equation}\label{conkappa}
\kappa:=\sup_{\veps}\mbE\|\zeta_s^\veps\|_{1}^4+
\mbE\|\zeta_s\|_{1}^4<\infty,
\end{equation}
$\gamma_\veps(t)>\frac{|\beta|}{\sqrt{3}}$ for all $t\in\R$
and
$\lim\limits_{\veps\rightarrow0}
\mathbb E\|\zeta^{\veps}_{s}-\zeta_{s}\|^{2}=0$.
Then
$$
\lim_{\veps\rightarrow0} \mathbb E\left(\sup_{s\leq t\leq s+T}
\|u^{\veps}(t,s,\zeta_s^\veps)-\bar{u}(t,s,\zeta_s)\|^{2}\right)=0
$$
for any $T>0$.
\end{theorem}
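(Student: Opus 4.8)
The plan is to run Khasminskii's time-discretization scheme on the difference $u^\veps-\bar u$, together with a stopping-time localization forced by the cubic nonlinearity. By time-translation we may take $s=0$; write $u_\veps(\cdot):=u^\veps(\cdot,0,\zeta_0^\veps)$ and $\bar u(\cdot):=\bar u(\cdot,0,\zeta_0)$. Fix $R>0$ and let $\tau_R:=\tau_R^\veps$ be the stopping time of Lemma \ref{orstes}. The argument has two steps: (i) for each fixed $R$, show $\Phi_R(\veps):=\mathbb E\sup_{0\le t\le T\wedge\tau_R}\|u_\veps(t)-\bar u(t)\|^2\to0$ as $\veps\to0$; (ii) remove $\tau_R$ by uniform moment bounds. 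Note first that (G$_\gamma$) together with the hypothesis $\gamma_\veps(\cdot)>|\beta|/\sqrt3$ yields $\bar\gamma\ge|\beta|/\sqrt3$, so Lemma \ref{Disscubic}, Theorem \ref{wellD}, and (via Remark \ref{reuniest}) the moment estimates of Section \ref{Wellp} apply to both equations.

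For step (i), apply It\^o's formula to $\|u_\veps(t)-\bar u(t)\|^2$ up to $t\wedge\tau_R$. The Laplacian contributes $-2\|u_\veps-\bar u\|_1^2\le0$ and is discarded. In the cubic difference add and subtract $(\gamma_\veps(\sigma)+i\beta)|\bar u(\sigma)|^2\bar u(\sigma)$: by Lemma \ref{Disscubic} (applicable since $\gamma_\veps(\sigma)\ge|\beta|/\sqrt3$) the genuinely cubic part pairs with $u_\veps-\bar u$ into a nonnegative quantity and is discarded, leaving the oscillation remainder $-2(\gamma_\veps(\sigma)-\bar\gamma)\la|\bar u(\sigma)|^2\bar u(\sigma),u_\veps(\sigma)-\bar u(\sigma)\ra$. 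In the $f$-difference write $f_\veps(\sigma,u_\veps)-\bar f(\bar u)=[f_\veps(\sigma,u_\veps)-f_\veps(\sigma,\bar u)]+[f_\veps(\sigma,\bar u)-\bar f(\bar u)]$; by (H$_f^1$) the first, paired with $u_\veps-\bar u$, is $\le L_f\|u_\veps-\bar u\|^2$ (a Gronwall term), the second is the $f$-oscillation remainder. Taking $\sup_{t\le T\wedge\tau_R}$ and then expectation, the Burkholder--Davis--Gundy inequality controls the stochastic integral and, after absorbing a term $\tfrac14\mathbb E\sup\|u_\veps-\bar u\|^2$, reproduces $\mathbb E\int_0^{T\wedge\tau_R}\|g_\veps(\sigma,u_\veps)-\bar g(\bar u)\|_{L_2(U,L^2(\T^d))}^2\,\d\sigma$; splitting this as $2L_g^2\|u_\veps-\bar u\|^2$ (Gronwall term, by (H$_g^1$)) plus a $g$-oscillation remainder. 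Writing $\phi(t):=\mathbb E\sup_{0\le r\le t\wedge\tau_R}\|u_\veps(r)-\bar u(r)\|^2$, this yields
\[
\phi(T)\le C\Big(\mathbb E\|\zeta_0^\veps-\zeta_0\|^2+\int_0^T\phi(\sigma)\,\d\sigma+\mathcal E_\gamma+\mathcal E_f+\mathcal E_g\Big),
\]
with $\mathcal E_\gamma,\mathcal E_f,\mathcal E_g$ the three oscillation remainders.

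Each remainder is estimated by replacing $\bar u(\sigma)$ and $u_\veps(\sigma)-\bar u(\sigma)$ by their step processes on $[k\delta,(k+1)\delta)$ and invoking the averaging conditions on the frozen values. For $\mathcal E_f$ one uses $\|\int_{k\delta}^{(k+1)\delta}(f_\veps(\sigma,x)-\bar f(x))\,\d\sigma\|\le\delta\,\delta_f(\delta/\veps)(1+\|x\|)$ from (G$_f$), applied pointwise with $x=\bar u(k\delta)$, while the discretization errors are controlled by (H$_f^1$), the time-increment bounds \eqref{orsteseq}--\eqref{avesteseq} and the moment bound \eqref{uniest1}; $\mathcal E_g$ is handled the same way using (G$_g^1$) and (H$_g^1$) (here the cancellation is at the level of $\int\|g_\veps-\bar g\|^2$). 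For $\mathcal E_\gamma$ one uses (G$_\gamma$), the identity $\||w|^2w\|=\|w\|_{L^6(\T^d)}^3\le C\|w\|_1^3$, and the interpolation inequality \eqref{intp}; this is where the $L^4$-time-increment estimates \eqref{Horsteseq}--\eqref{Hasteseq} and the $H^1$-moment bound \eqref{uniest2} enter, hence where the regularity assumption \eqref{conkappa} is used, and also where the stopping time does genuine work: on $[0,\tau_R]$ one has $\|u_\veps-\bar u\|\le R$, which reduces the required integrability of $\bar u$ (a sixth moment of $\|\bar u\|_1$, which \eqref{conkappa} does not provide, down to a third moment, which it does) at the price of a factor polynomial in $R$. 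Altogether $\mathcal E_\gamma+\mathcal E_f+\mathcal E_g\le C_{T,R}(1+\kappa)\big(\rho_0(\delta)+\delta_\gamma(\delta/\veps)+\delta_f(\delta/\veps)+\delta_g(\delta/\veps)\big)+\eta_\delta\int_0^T\phi(\sigma)\,\d\sigma$, where $\rho_0(\delta)\to0$ as $\delta\to0$ and $\eta_\delta\to0$. Gronwall's lemma then gives $\Phi_R(\veps)\le C_{T,R}(1+\kappa)\big(\mathbb E\|\zeta_0^\veps-\zeta_0\|^2+\rho_0(\delta)+\delta_\gamma(\delta/\veps)+\delta_f(\delta/\veps)+\delta_g(\delta/\veps)\big)$. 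Choosing $\delta=\delta(\veps)=\sqrt\veps$, so $\delta\to0$ and $\delta/\veps\to\infty$, and using $\mathbb E\|\zeta_0^\veps-\zeta_0\|^2\to0$, we conclude $\lim_{\veps\to0}\Phi_R(\veps)=0$ for each fixed $R$.

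For step (ii), $\mathbb E\sup_{0\le t\le T}\|u_\veps-\bar u\|^2\le\Phi_R(\veps)+\big(\mathbb E\sup_{0\le t\le T}\|u_\veps-\bar u\|^4\big)^{1/2}\mathbb P(\tau_R<T)^{1/2}$. By \eqref{uniest1} with $p=2$ (together with Remark \ref{reuniest} and \eqref{conkappa}) the fourth moment is bounded uniformly in $\veps$, and Chebyshev's inequality gives $\mathbb P(\tau_R<T)\le C_TR^{-2}(1+\kappa)$, so the last term is $\le C_TR^{-1}(1+\kappa)$ uniformly in $\veps$. Hence $\limsup_{\veps\to0}\mathbb E\sup_{0\le t\le T}\|u_\veps-\bar u\|^2\le C_TR^{-1}(1+\kappa)$ for every $R>0$; letting $R\to\infty$ finishes the proof. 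The main obstacle is precisely the cubic term $|u|^2u$: it is the only super-linear term subjected to averaging (through $\gamma(t/\veps)$) and it is not Lipschitz on $L^2(\T^d)$, so the whole argument must be run with $L^4$-interpolation, $H^1$-regularity of the initial data, and stopping-time localization, and the limits must be taken in the right order ($\veps\to0$ first, $R\to\infty$ afterwards) to keep the $R$-dependent constants under control.
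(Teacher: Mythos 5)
Your proposal is correct and follows essentially the same route as the paper: Khasminskii time discretization, the stopping time $\tau_R^\veps$ to localize the cubic term, It\^o's formula plus Burkholder--Davis--Gundy, Lemma \ref{Disscubic} to discard the monotone part of the cubic difference, the $L^4$-increment estimates \eqref{Horsteseq}--\eqref{Hasteseq} for the $\gamma$-oscillation remainder, and Gronwall. The only (cosmetic) difference is at the end, where you keep $R$ fixed, send $\veps\to0$ first and then $R\to\infty$, whereas the paper couples the two scales by taking $R=\veps^{-1/136}$, $\delta=\sqrt{\veps}$; both orderings close the argument.
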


\begin{proof}
Let $u^\veps(t):=u^{\veps}(t,s,\zeta_s^\veps)$ and
$\bar{u}(t):=\bar{u}(t,s,\zeta_s)$.
Without loss of generality, we assume that $s=0$.
Set $\zeta^\veps:=\zeta_0^\veps$ and $\zeta:=\zeta_0$ for simplicity.
Let $\chi_{A}$ be the indicator function. It follows from Chebyshev's inequality
and \eqref{uniest1} that
\begin{align}\label{stop0}
&
\mathbb E\left(\sup_{0\leq t\leq T}
\|u^{\veps}(t)-\bar{u}(t)\|^{2}\right)\\\nonumber
&
\leq
\mathbb E\left(\sup_{0\leq t\leq T\wedge\tau_R^\veps}
\|u^{\veps}(t)-\bar{u}(t)\|^{2}\right)
+\mathbb E\left(\chi_{\{\tau_R^\veps\leq T\}}\cdot\sup_{0\leq t\leq T}
\|u^{\veps}(t)-\bar{u}(t)\|^{2}\right)\\\nonumber
&
\leq \mathbb E\left(\sup_{0\leq t\leq T\wedge\tau_R^\veps}
\|u^{\veps}(t)-\bar{u}(t)\|^{2}\right)
+R^{-2}C
\left(\mbE\left(\sup_{0\leq t\leq T}\|u^\veps(t)\|^4\right)
+\mbE\left(\sup_{0\leq t\leq T}\|\bar{u}(t)\|^4\right)\right)\\\nonumber
&
\leq \mathbb E\left(\sup_{0\leq t\leq T\wedge\tau_R^\veps}
\|u^{\veps}(t)-\bar{u}(t)\|^{2}\right)
+R^{-2}C_{T,\kappa}.
\end{align}
In view of It\^o's formula and integration by parts, we have
\begin{align*}
&
\|u^{\veps}(t)-\bar{u}(t)\|^{2}\\\nonumber
&
=\|\zeta^{\veps}-\zeta\|^{2}+\int_{0}^{t}\bigg(2\langle
(1+i\alpha)\Delta(u^{\veps}(\sigma)-\bar{u}(\sigma)),
u^{\veps}(\sigma)-\bar{u}(\sigma)\rangle\\\nonumber
&\qquad
-2\langle(\gamma_\veps(\sigma)+i\beta)|u^{\veps}(\sigma)|^{2}
u^{\veps}(\sigma)-
(\bar{\gamma}+i\beta)|\bar{u}(\sigma)|^{2}\bar{u}(\sigma),
u^{\veps}(\sigma)-\bar{u}(\sigma)\rangle\\\nonumber
&\qquad
+2\langle f_{\veps}(\sigma,u^{\veps}(\sigma))
-\bar{f}(\bar{u}(\sigma)),u^{\veps}(\sigma)-\bar{u}(\sigma)\rangle
+\|g_{\veps}(\sigma,u^{\veps}(\sigma))
-\bar{g}(\bar{u}(\sigma))\|_{L_{2}(U,L^{2}(\T^{d}))}^{2}\bigg)\d\sigma\\\nonumber
&\quad
+2\int_{0}^{t}\langle u^{\veps}(\sigma)-\bar{u}(\sigma),\left(
g_{\veps}(\sigma,u^{\veps}(\sigma))
-\bar{g}(\bar{u}(\sigma))\right)\d W(\sigma)\rangle\\\nonumber
&
\leq\|\zeta^{\veps}-\zeta\|^{2}+\int_{0}^{t}\bigg(
-2\langle(\gamma_\veps(\sigma)+i\beta)|u^{\veps}(\sigma)|^{2}
u^{\veps}(\sigma)-
(\bar{\gamma}+i\beta)|\bar{u}(\sigma)|^{2}\bar{u}(\sigma),
u^{\veps}(\sigma)-\bar{u}(\sigma)\rangle\\\nonumber
&\qquad
+2\langle f_{\veps}(\sigma,u^{\veps}(\sigma))
-\bar{f}(\bar{u}(\sigma)),u^{\veps}(\sigma)-\bar{u}(\sigma)\rangle
+\|g_{\veps}(\sigma,u^{\veps}(\sigma))
-\bar{g}(\bar{u}(\sigma))\|_{L_{2}(U,L^{2}(\T^{d}))}^{2}\bigg)\d\sigma\\\nonumber
&\quad
+2\int_{0}^{t}\langle u^{\veps}(\sigma)-\bar{u}(\sigma),\left(
g_{\veps}(\sigma,u^{\veps}(\sigma))
-\bar{g}(\bar{u}(\sigma))\right)\d W(\sigma)\rangle.
\end{align*}

Then by Burkholder-Davis-Gundy inequality we get
\begin{align*}
&
\mathbb E\left(\sup_{0\leq t\leq T\wedge\tau_R^\veps}
\|u^{\veps}(t)-\bar{u}(t)\|^{2}\right)\\\nonumber
&
\leq \mathbb E\|\zeta^{\veps}-\zeta\|^{2}
+\mathbb E\int_{0}^{T}
\|g_{\veps}(\sigma,u^{\veps}(\sigma))
-\bar{g}(\bar{u}(\sigma))\|_{L_{2}(U,L^{2}(\T^{d}))}^{2}
\d\sigma\\\nonumber
&\quad
+\mathbb E\left(\sup_{0\leq t\leq T\wedge\tau_R^\veps}\int_{0}^{t}
-2\langle(\gamma_\veps(\sigma)+i\beta)|u^{\veps}(\sigma)|^{2}
u^{\veps}(\sigma)-(\bar{\gamma}+i\beta)|\bar{u}(\sigma)|^{2}\bar{u}(\sigma),
u^{\veps}(\sigma)-\bar{u}(\sigma)\rangle\d\sigma\right)\\\nonumber
&\quad
+\mathbb E\left(\sup_{0\leq t\leq T}\int_{0}^{t}2\langle
f_{\veps}(\sigma,u^{\veps}(\sigma))
-\bar{f}(\bar{u}(\sigma)),u^{\veps}(\sigma)-\bar{u}(\sigma)
\rangle\d\sigma\right)\\\nonumber
&\quad
+6\mathbb E\left(\int_{0}^{T}
\|g_{\veps}(\sigma,u^{\veps}(\sigma))
-\bar{g}(\bar{u}(\sigma))\|_{L_{2}(U,L^{2}(\T^{d}))}^{2}
\|u^{\veps}(\sigma)-\bar{u}(\sigma)\|^{2}\d\sigma
\right)^{\frac{1}{2}}.
\end{align*}
We note that
\begin{align*}
&
6\mathbb E\left(\int_{0}^{T}
\|g_{\veps}(\sigma,u^{\veps}(\sigma))
-\bar{g}(\bar{u}(\sigma))\|_{L_{2}(U,L^{2}(\T^{d}))}^{2}
\|u^{\veps}(\sigma)-\bar{u}(\sigma)\|^{2}\d\sigma
\right)^{\frac{1}{2}}\\\nonumber
&
\leq\frac{1}{2}\mbE\left(\sup_{0\leq t\leq T}
\|u^{\veps}(t)-\bar{u}(t)\|^{2}\right)+C\mathbb E\int_{0}^{T}\|
g_{\veps}(\sigma,u^{\veps}(\sigma))
-\bar{g}(\bar{u}(\sigma))\|_{L_{2}(U,L^{2}(\T^{d}))}^{2}\d\sigma
\end{align*}
by Young's inequality. Hence
\begin{align}\label{avethfeq3}
\mathbb E\left(\sup_{0\leq t\leq T\wedge\tau_R^\veps}
\|u^{\veps}(t)-\bar{u}(t)\|^{2}\right)\leq
2\mathbb E\|\zeta^{\veps}-\zeta\|^{2}+\mathscr I_1
+\mathscr I_{2}+\mathscr I_{3},
\end{align}
where
\begin{align*}
\mathscr I_1:=\mathbb E\left(\sup_{0\leq t\leq T\wedge\tau_R^\veps}\int_{0}^{t}
-4\langle(\gamma_\veps(\sigma)+i\beta)|u^{\veps}(\sigma)|^{2}
u^{\veps}(\sigma)
-(\bar{\gamma}+i\beta)|\bar{u}(\sigma)|^{2}\bar{u}(\sigma),
u^{\veps}(\sigma)-\bar{u}(\sigma)\rangle\d\sigma\right),
\end{align*}
\begin{align*}
\mathscr I_2:=4\mathbb E\left(\sup_{0\leq t\leq T}\int_{0}^{t}\langle
f_{\veps}(\sigma,u^{\veps}(\sigma))
-\bar{f}(\bar{u}(\sigma)),u^{\veps}(\sigma)-\bar{u}(\sigma)
\rangle\d\sigma\right)
\end{align*}
and
\begin{align*}
\mathscr I_3:=C\mathbb E\int_{0}^{T}
\|g_{\veps}(\sigma,u^{\veps}(\sigma))
-\bar{g}(\bar{u}(\sigma))\|_{L_{2}(U,L^{2}(\T^{d}))}^{2}
\d\sigma.
\end{align*}

Firstly, we estimate $\mathscr I_1$.
It follows from Lemma \ref{Disscubic} that
\[
\left\langle-\left(|\beta|/\sqrt{3}+i\beta\right)\left(|u|^{2}u-|v|^{2}v\right),u-v
\right\rangle\leq0
\]
for all $u,v\in H^1(\T^{d})$.
We set $\gamma_\veps(\cdot):=\gamma_\veps(\cdot)-\frac{|\beta|}{\sqrt{3}}\geq0$
and $\bar{\gamma}:=\bar{\gamma}-\frac{|\beta|}{\sqrt{3}}\geq0$. Hence
\begin{align*}
\mathscr I_1
&
\leq\mathbb E\left(\sup_{0\leq t\leq T\wedge\tau_R^\veps}\int_{0}^{t}
-4\langle\gamma_\veps(\sigma)|u^{\veps}(\sigma)|^{2}
u^{\veps}(\sigma)
-\bar{\gamma}|\bar{u}(\sigma)|^{2}\bar{u}(\sigma),
u^{\veps}(\sigma)-\bar{u}(\sigma)\rangle\d\sigma\right)\\\nonumber
&
\leq\mathbb E\left(\sup_{0\leq t\leq T\wedge\tau_R^\veps}\int_{0}^{t}
-4\langle\gamma_\veps(\sigma)|u^{\veps}(\sigma)|^{2}
u^{\veps}(\sigma)
-\gamma_\veps(\sigma)|\bar{u}(\sigma)|^{2}\bar{u}(\sigma),
u^{\veps}(\sigma)-\bar{u}(\sigma)\rangle\d\sigma\right)\\\nonumber
&\quad
+\mathbb E\left(\sup_{0\leq t\leq T\wedge\tau_R^\veps}\int_{0}^{t}
-4\langle\gamma_\veps(\sigma)|\bar{u}(\sigma)|^{2}\bar{u}(\sigma)
-\bar{\gamma}|\bar{u}(\sigma)|^{2}\bar{u}(\sigma),
u^{\veps}(\sigma)-\bar{u}(\sigma)\rangle\d\sigma\right)\\\nonumber
&
\leq \mathbb E\left(\sup_{0\leq t\leq T\wedge\tau_R^\veps}\int_{0}^{t}
-4\langle\gamma_\veps(\sigma)|\bar{u}(\sigma)|^{2}\bar{u}(\sigma)
-\bar{\gamma}|\bar{u}(\sigma)|^{2}\bar{u}(\sigma),
u^{\veps}(\sigma)-\bar{u}(\sigma)\rangle\d\sigma\right)
\end{align*}
since $\langle|u|^{2}u-|v|^{2}v,u-v\rangle\geq0$ for
all $u,v\in H^1$. Therefore, we have
\begin{align}\label{cubic03}
\mathscr I_1
&
\leq \mathbb E\left(\sup_{0\leq t\leq T\wedge\tau_R^\veps}\int_{0}^{t}
-4\langle\gamma_\veps(\sigma)|\bar{u}(\sigma)|^{2}\bar{u}(\sigma)
-\bar{\gamma}|\bar{u}(\sigma)|^{2}\bar{u}(\sigma),
u^{\veps}(\sigma)-\widetilde{u}^{\veps}(\sigma)\rangle\d\sigma\right)\\\nonumber
&\quad
+\mathbb E\left(\sup_{0\leq t\leq T\wedge\tau_R^\veps}\int_{0}^{t}
-4\langle\gamma_\veps(\sigma)|\bar{u}(\sigma)|^{2}\bar{u}(\sigma)
-\bar{\gamma}|\bar{u}(\sigma)|^{2}\bar{u}(\sigma),
\widetilde{u}^{\veps}(\sigma)-\widetilde{u}(\sigma)\rangle\d\sigma\right)\\\nonumber
&\quad
+\mathbb E\left(\sup_{0\leq t\leq T\wedge\tau_R^\veps}\int_{0}^{t}
-4\langle\gamma_\veps(\sigma)|\bar{u}(\sigma)|^{2}\bar{u}(\sigma)
-\bar{\gamma}|\bar{u}(\sigma)|^{2}\bar{u}(\sigma),
\widetilde{u}(\sigma)-\bar{u}(\sigma)\rangle\d\sigma\right)\\\nonumber
&
=:\mathscr I_1^{1}+\mathscr I_1^{2}+\mathscr I_1^{3}.
\end{align}
For $\mathscr I_1^{1}$, by H\"older's inequality, \eqref{uniest1} and \eqref{Horsteseq}, one sees that
\begin{align}\label{cubic04}
\mathscr I_1^{1}
&
\leq C\mathbb E\int_{0}^{T\wedge\tau_R^\veps}\|
\gamma_\veps(\sigma)|\bar{u}(\sigma)|^{2}\bar{u}(\sigma)
-\bar{\gamma}|\bar{u}(\sigma)|^{2}\bar{u}(\sigma)\|_{L^{\frac43}}
\|u^{\veps}(\sigma)-\widetilde{u}^{\veps}(\sigma)\|_{L^4}\d\sigma\\\nonumber
&
\leq C\mathbb E\int_{0}^{T\wedge\tau_R^\veps}\|\bar{u}(\sigma)\|_{L^4}^3
\|u^{\veps}(\sigma)-\widetilde{u}^{\veps}(\sigma)\|_{L^4}\d\sigma\\\nonumber
&
\leq C\left(\mathbb E\int_{0}^{T}\|\bar{u}(\sigma)\|_{L^4}^4\d\sigma\right)^{\frac34}
\left(\mathbb E\int_{0}^{T\wedge\tau_R^\veps}\|u^{\veps}(\sigma)-\widetilde{u}^{\veps}(\sigma)\|_{L^4}^4\d\sigma
\right)^{\frac14}\\\nonumber
&
\leq R^{\frac18}C_{T,\kappa}\delta^{\frac{1}{32}}.
\end{align}
Similarly, by H\"older's inequality, \eqref{uniest1} and \eqref{Hasteseq} we have
\begin{align}\label{cubic05}
\mathscr I_1^{3}
&
\leq C\left(\mathbb E\int_{0}^{T}\|\bar{u}(\sigma)\|_{L^4}^4\d\sigma\right)^{\frac34}
\left(\mathbb E\int_{0}^{T\wedge\tau_R^\veps}\|\bar{u}(\sigma)-\widetilde{u}(\sigma)\|_{L^4}^4\d\sigma
\right)^{\frac14}
\leq R^{\frac18}C_{T,\kappa}\delta^{\frac{1}{32}}.
\end{align}
For $\mathscr I_1^{2}$, we obtain
\begin{align}\label{cubic06}
\mathscr I_1^{2}
&
:=\mathbb E\left(\sup_{0\leq t\leq T\wedge\tau_R^\veps}\int_{0}^{t}
-4\langle\gamma_\veps(\sigma)|\bar{u}(\sigma)|^{2}\bar{u}(\sigma)
-\bar{\gamma}|\bar{u}(\sigma)|^{2}\bar{u}(\sigma),
\widetilde{u}^{\veps}(\sigma)-\widetilde{u}(\sigma)\rangle
\d\sigma\right)\\\nonumber
&
\leq\mathbb E\left(\sup_{0\leq t\leq T\wedge\tau_R^\veps}\int_{0}^{t}
-4\langle\gamma_\veps(\sigma)|\bar{u}(\sigma)|^{2}\bar{u}(\sigma)
-\gamma_\veps(\sigma)|\widetilde{u}(\sigma)|^{2}\widetilde{u}(\sigma),
\widetilde{u}^{\veps}(\sigma)-\widetilde{u}(\sigma)\rangle
\d\sigma\right)\\\nonumber
&\quad
+\mathbb E\left(\sup_{0\leq t\leq T\wedge\tau_R^\veps}\int_{0}^{t}
-4\langle\gamma_\veps(\sigma)|\widetilde{u}(\sigma)|^{2}
\widetilde{u}(\sigma)
-\bar{\gamma}|\widetilde{u}(\sigma)|^{2}\widetilde{u}(\sigma),
\widetilde{u}^{\veps}(\sigma)-\widetilde{u}(\sigma)\rangle
\d\sigma\right)\\\nonumber
&\quad
+\mathbb E\left(\sup_{0\leq t\leq T\wedge\tau_R^\veps}\int_{0}^{t}
-4\langle\bar{\gamma}|\widetilde{u}(\sigma)|^{2}\widetilde{u}(\sigma)
-\bar{\gamma}|\bar{u}(\sigma)|^{2}\bar{u}(\sigma),
\widetilde{u}^{\veps}(\sigma)-\widetilde{u}(\sigma)\rangle
\d\sigma\right)\\\nonumber
&
=:\mathscr I_1^{2,1}+\mathscr I_1^{2,2}+\mathscr I_1^{2,3}.
\end{align}

Note that
\begin{align*}
\left|\la |u|^2u-|v|^2v,w\ra\right|\leq C\|u-v\|_{L^4(\T^d)}
\left(\|u\|_{L^4(\T^d)}^3+\|v\|_{L^4(\T^d)}^3
+\|w\|_{L^4(\T^d)}^3\right)
\end{align*}
for all $u,v,w\in H_0^1$. Therefore,
\begin{align}\label{cubic07}
&
\mathscr I_1^{2,1}+\mathscr I_1^{2,3}\\\nonumber
&
\leq C\mbE\int_{0}^{T\wedge\tau_R^\veps}
\|\widetilde{u}(\sigma)-\bar{u}(\sigma)\|_{L^4(\T^d)}
\left(\|\widetilde{u}(\sigma)\|_{L^4(\T^d)}^3
+\|\bar{u}(\sigma)\|_{L^4(\T^d)}^3
+\|\widetilde{u}^\veps(\sigma)\|_{L^4(\T^d)}^3\right)
\d\sigma\\\nonumber
&
\leq C\left(\mbE\int_{0}^{T\wedge\tau_R^\veps}
\|\widetilde{u}(\sigma)-\bar{u}(\sigma)\|_{L^4(\T^d)}^4
\d\sigma\right)^{\frac14}\left(
\mbE\int_{0}^{T\wedge\tau_R^\veps}
\left(\|\widetilde{u}(\sigma)\|_{L^4(\T^d)}^4
+\|\bar{u}(\sigma)\|_{L^4(\T^d)}^4\right)\d\sigma
\right)^{\frac34}\\\nonumber
&\quad
+C\left(\mbE\int_{0}^{T\wedge\tau_R^\veps}
\|\widetilde{u}(\sigma)-\bar{u}(\sigma)\|_{L^4(\T^d)}^4
\d\sigma\right)^{\frac14}\left(\mbE\int_{0}^{T\wedge\tau_R^\veps}
\|\widetilde{u}^\veps(\sigma)\|_{L^4(\T^d)}^4\d\sigma
\right)^{\frac34}\\\nonumber
&
\leq R^{\frac18}C_{T,\kappa}\delta^{\frac{1}{32}}
\end{align}
by \eqref{uniest1}, \eqref{Hasteseq}, \eqref{esteqsp1} and \eqref{esteqsp2}.
Set $t(\delta):=\left[\frac{t}{\delta}\right]$ for all $0\leq t\leq T$.
For $\mathscr I_1^{2,2}$, in view of \eqref{esteqsp1}, \eqref{esteqsp2} and (G$_\gamma$), we have
\begin{align}\label{cubic08}
\mathscr I_1^{2,2}
&
=4\mbE\left(\sup_{0\leq t\leq T\wedge\tau_R^\veps}\sum_{k=0}^{t(\delta)-1}
\left(-\la|\bar{u}(k\delta)|^2\bar{u}(k\delta),
u^{\veps}(k\delta)-\bar{u}(k\delta)
\ra\int_{k\delta}^{(k+1)\delta}
\left(\gamma_\veps(\sigma)-\bar{\gamma}\right)\d\sigma
\right)\right)\\\nonumber
&\quad
+4\mbE\left(\sup_{0\leq t\leq T\wedge\tau_R^\veps}\int_{t(\delta)\delta}^{t}
-(\gamma_\veps(\sigma)-\bar{\gamma})\d\sigma\la
|\bar{u}(t(\delta)\delta)|^2\bar{u}(t(\delta)\delta),
u^{\veps}(t(\delta)\delta)
-\bar{u}(t(\delta)\delta)
\ra\right)\\\nonumber
&
\leq C\mbE\left(\sup_{0\leq t\leq T\wedge\tau_R^\veps}\sum_{k=0}^{t(\delta)-1}
\left(\|\bar{u}(k\delta)\|_{L^4(\T^d)}^4
+\|u^\veps(k\delta)\|_{L^4(\T^d)}^4\right)
\left|\int_{k\delta}^{(k+1)\delta}(\gamma_\veps(\sigma)-\bar{\gamma})
\d\sigma\right|\right)\\\nonumber
&\quad
+C_{T}\mbE\left(\sup_{0\leq t\leq T\wedge\tau_R^\veps}
\int_{t(\delta)\delta}^{t}
\left(\|\bar{u}(t(\delta)\delta)\|_{L^4(\T^d)}
\|u^\veps(t(\delta)\delta)\|^3_{L^4(\T^d)}
+\|\bar{u}(t(\delta)\delta)\|^4_{L^4(\T^d)}\right)\d\sigma\right)
\\\nonumber
&
\leq C_{T,\kappa}\delta_\gamma(\delta/\veps)
+C_{T}(\Xi_1+\Xi_2),
\end{align}
where
$$\Xi_1:=\mbE\left(\sup\limits_{0\leq t\leq T\wedge\tau_R^\veps}\int_{t(\delta)\delta}^t
\|\bar{u}(t(\delta)\delta)\|_{L^4(\T^d)}^4\d\sigma\right)$$
and
$$\Xi_2:=\mbE\left(\sup\limits_{0\leq t\leq T\wedge\tau_R^\veps}\int_{t(\delta)\delta}^t
\|u^\veps(t(\delta)\delta)\|^4_{L^4(\T^d)}
\d\sigma\right).
$$
Now we just show that $\Xi_2\leq R^{1/2}C_{T,\kappa}\delta^{1/8}$;
the case of $\Xi_1$ is similar.
By \eqref{Horsteseq}, \eqref{uniest1}, \eqref{uniest2} and H\"older's inequality, one sees that
\begin{align*}
\Xi_2
&
\leq C\mbE\left(\sup_{0\leq t\leq T\wedge\tau_R^\veps}
\int_{t(\delta)\delta}^t\left(
\|u^\veps(\sigma)-u^\veps(t(\delta)\delta)\|^4_{L^4(\T^d)}
+\|u^\veps(\sigma)\|^4_{L^4(\T^d)}
\right)\d\sigma\right)\\\nonumber
&
\leq
R^{\frac12}C_{T,\kappa}\delta^{\frac{1}{8}}+\mbE\left(\sup_{0\leq t\leq T}
\int_{t(\delta)\delta}^t\|u^\veps(\sigma)\|
\|u^\veps(\sigma)\|^3_{1}\d\sigma\right)\\\nonumber
&
\leq R^{\frac12}C_{T,\kappa}\delta^{\frac{1}{8}}
+\mbE\left[\sup_{0\leq t\leq T}\left(\int_{t(\delta)\delta}^t
\|u^\veps(\sigma)\|^4\d\sigma\right)^{\frac14}\left(
\int_{0}^T\|u^\veps(\sigma)\|^4_{1}\d\sigma
\right)^{\frac34}\right]\\\nonumber
&
\leq R^{\frac12}C_{T,\kappa}\delta^{\frac{1}{8}}
+\delta^{\frac14}\mbE\left[\left(\sup_{0\leq t\leq T}\|u^\veps(\sigma)\|
\right)\left(
\int_{0}^T\|u^\veps(\sigma)\|^4_{1}\d\sigma
\right)^{\frac34}\right]\\\nonumber
&
\leq R^{\frac12}C_{T,\kappa}\delta^{\frac{1}{8}}
+\delta^{\frac14}\left(\mbE\sup_{0\leq t\leq T}\|u^\veps(\sigma)\|^4
\right)^{\frac14}\left(\mbE
\int_{0}^T\|u^\veps(\sigma)\|^4_{1}\d\sigma
\right)^{\frac34}\\\nonumber
&
\leq R^{\frac12}C_{T,\kappa}\delta^{\frac{1}{8}}.
\end{align*}
Hence, \eqref{cubic06}, \eqref{cubic07} and \eqref{cubic08} yield
\begin{align}\label{cubic10}
\mathscr I_1^2\leq C_{T,\kappa}\left(R^{\frac18}\delta^{\frac{1}{32}}
+\delta_\gamma(\delta/\veps)\right).
\end{align}
Combining \eqref{cubic03}, \eqref{cubic04}, \eqref{cubic05}
and \eqref{cubic10}, we have
\begin{align}\label{cubic11}
\mathscr I_1\leq C_{T,\kappa}\left(R^{\frac18}\delta^{\frac{1}{32}}
+\delta_\gamma(\delta/\veps)\right).
\end{align}

Now we estimate
$$\mathscr I_{2}:=4\mathbb E\left(\sup\limits_{0\leq t\leq T}\int_{0}^{t}\langle
f_{\veps}(\sigma,u^{\veps}(\sigma))
-\bar{f}(\bar{u}(\sigma)),u^{\veps}(\sigma)-\bar{u}(\sigma)
\rangle\d\sigma\right)$$
and
$$\mathscr I_{3}:=C\mbE\int_{0}^{T}\|g_{\veps}(\sigma,u^{\veps}(\sigma))
-\bar{g}(\bar{u}(\sigma))\|_{L_{2}(U,L^{2}(\T^{d}))}^{2}
\d\sigma.$$
Since $f_\veps$ and $g_\veps$ are Lipschitz continuous, by a similar argument as in \cite[Theorem 4.5]{CLAM2021}, we have
\begin{align}\label{avethfeq10}
\mathscr I_{2}
\leq4L_{f}\int_{0}^{T}\mbE\left(\sup_{0\leq\tau\leq\sigma}
\|u^{\veps}(\tau)-\bar{u}(\tau)\|^{2}\right)\d\sigma
+C_{T}\left(1+\mbE\|\zeta\|^{2}\right)
\left(\delta^{\frac{1}{4}}+
\delta_{f}\left(\frac{\delta}{\veps}\right)\right)
\end{align}
and
\begin{align}\label{avethfeq13}
\mathscr I_{3}\leq C\int_{0}^{T}\mbE\left(\sup_{s\leq\tau\leq\sigma}
\|u^{\veps}(\tau)-\bar{u}(\tau)\|^{2}\right)\d\sigma
+C_{T}\left(1+\mbE\|\zeta\|^{2}\right)\left(\delta^{\frac12}
+\delta_{g}\left(\frac{\delta}{\veps}\right)\right).
\end{align}

Combining \eqref{stop0}, \eqref{avethfeq3}, \eqref{cubic11}, \eqref{avethfeq10} and
\eqref{avethfeq13}, we get
\begin{align*}
\mbE\left(\sup_{0\leq t\leq T}\|u^{\veps}(t)-\bar{u}(t)\|^{2}\right)
&
\leq2\mbE\|\zeta^{\veps}-\zeta\|^{2}
+C\int_{0}^{T}\mbE\left(\sup_{s\leq\tau\leq \sigma}
\|u^{\veps}(\tau)-\bar{u}(\tau)\|^{2}\right)\d\sigma\\\nonumber
&\quad
+C_{T,\kappa}\left(
R^{\frac18}\delta^{\frac{1}{32}}
+\delta_{\gamma}\left(\frac{\delta}{\veps}\right)
+\delta_{f}\left(\frac{\delta}{\veps}\right)
+\delta_{g}\left(\frac{\delta}{\veps}\right)\right)
+R^{-2}C_{T,\kappa}.
\end{align*}
It follows from Gronwall's lemma that
\begin{align}\label{avethfeq15}
&
\mbE\left(\sup_{0\leq t\leq T}
\|u^{\veps}(t)-\bar{u}(t)\|^{2}\right)\\\nonumber
&
\leq C_{T,\kappa}\bigg(\mbE\|\zeta^{\veps}-\zeta\|^{2}
+\left(
R^{\frac18}\delta^{\frac{1}{32}}
+\delta_{\gamma}\left(\frac{\delta}{\veps}\right)
+\delta_{f}\left(\frac{\delta}{\veps}\right)
+\delta_{g}\left(\frac{\delta}{\veps}\right)+R^{-1}\right)
\bigg).
\end{align}
Taking $R=\veps^{-\frac{1}{136}}$ and $\delta=\sqrt{\veps}$, then letting
$\veps\rightarrow0$ in \eqref{avethfeq15}, we have
\[
\lim_{\veps\rightarrow0}\mbE\left(\sup_{0\leq t\leq T}
\|u^{\veps}(t)-\bar{u}(t)\|^{2}\right)=0.
\]
\end{proof}

\begin{remark}\label{gamcon}\rm
Note that if $\gamma(t)\equiv\gamma$ is independent of time,
we need not to estimate $\mathscr I_{1}$. Therefore,
Theorem \ref{avethf} still holds without assumptions \eqref{conkappa}
and (G$_g^2$).
\end{remark}

\section{The second Bogolyubov theorem}\label{secondBT}

In this section, we study the second Bogolyubov theorem for
stochastic CGL equations. Firstly, we show that there exists
a unique $\mcL^{2}(\Omega,\mathbb P;L^{2}(\T^{d}))$-bounded
solution $u^{\veps}(t),t\in\R$ of \eqref{eqG2.1} which
inherits the recurrent properties (in particular, periodic,
quasi-periodic, almost periodic, almost automorphic, Birkhoff
recurrent, Levitan almost periodic, almost recurrent,
pseudo-periodic, pseudo-recurrent, Poisson stable)
of the coefficients in distribution sense for any $0<\veps\leq1$.
This result is interesting in its own right and has been studied extensively;
see e.g. \cite{CL_2017, CML2020, DT1995, LL2020} and references therein.
Since our main results are the averaging principles, we omit the definitions
of these recurrent functions for brevity; see \cite{CL_2017, CLAM2021, LL2020}
for details. Then we prove that the recurrent solution
to the original equation converges to the stationary solution of the averaged
equation when the time scale goes to zero.
An $H$-valued stochastic process $X(t),t\in\R$ is called
{\em $\mcL^{p}(\Omega,\P;H)$-bounded} if
\[
\sup\limits_{t\in\R}\mbE\|X(t)\|_H^{p}<\infty.
\]

\subsection{Bounded solution}\label{boundedsol}
Without loss of generality, we assume that $\veps=1$ in this subsection;
that is to say, we consider the equation \eqref{maineq1}, i.e.
\begin{equation*}
 \d u(t)=\left[(1+i\alpha )\Delta u(t)-(\gamma(t)+i\beta)|u(t)|^{2}u(t)
     +f(t,u(t))\right]\d t + g(t,u(t))\d W(t).
  \end{equation*}
Now, we introduce the following condition
\begin{enumerate}
  \item[{\bf(H$_f^3$)}] There exists a constant $\lambda_f\in\R$ such that for all $t\in\R$ and $x,y\in L^2(\T^d)$
      \[
      \la f(t,x)-f(t,y),x-y\ra\leq\lambda_f\|x-y\|^2.
      \]
\end{enumerate}

\begin{remark}\rm
Note that (H$_f^1$) implies (H$_f^3$). In order to study the bounded solution,
the system \eqref{maineq1} needs to be dissipative, i.e.
$\lambda_*-\lambda_f-\frac{L_g^2}{2}>0$. Since condition $\lambda_*-L_f-\frac{L_g^2}{2}>0$
is stronger than $\lambda_*-\lambda_f-\frac{L_g^2}{2}>0$
when $\lambda_f$ is negative, we introduce (H$_f^3$) additionally.
\end{remark}

\begin{lemma}\label{essol}
Let $\gamma(\cdot)\geq\frac{|\beta|}{\sqrt3}$.
Assume that {\rm{(H$_f^1$)}}, {\rm{(H$_f^3$)}} and {\rm{(H$_g^1$)}} hold,
$\lambda_{*}-\lambda_{f}-\frac{L_{g}^{2}}{2}>0$.
Fix $s\in\R$. Let $u(t,s,\zeta_{s})$, $t\geq s$ be the solution to
\eqref{maineq1} with the initial data $\zeta_s$.
Then we have the following statements.
\begin{enumerate}
  \item If $\zeta_{s}\in \mcL^{2p}(\Omega,\mathbb P;L^{2}(\T^{d}))$, then
  for any $\eta\in(0,2\lambda_{*}-2\lambda_{f}-(2p-1)L_{g}^{2})$
there exists a constant $M_{1}>0$ such that
\begin{equation}\label{ineq2}
\mathbb E\|u(t,s,\zeta_{s})\|^{2p}
\leq {\rm{e}}^{-\eta p(t-s)}\mathbb E\|\zeta_{s}\|^{2p}+M_{1},
\end{equation}
where $1\leq p<\frac{\lambda_*-\lambda_f}{L_g^2}+\frac12$, $M_1$ depends only on $\eta$, $p$ and $K$.
  \item If {\rm{(H$_g^2$)}} holds and
$\zeta_{s}\in \mcL^{2p}(\Omega,\mathbb P;H_0^{1})$, then
there exists a constant $M_{2}>0$ such that
\begin{equation}\label{ineq3}
\mathbb E\|u(t,s,\zeta_{s})\|_{1}^{2p}
\leq M_{2}\left(
{\rm{e}}^{-\eta p(t-s)}\mathbb E\|\zeta_{s}\|_{1}^{2p}+1\right),
\end{equation}
where $p$ and $\eta$ are as in {\rm(i)}, $M_2$ depends only on $\eta$, $p$ and $K$.
  \item
  If {\rm{(H$_f^2$)}}, {\rm{(H$_g^2$)}} and {\rm{(H$_g^3$)}} hold,
  $\lambda_*-\lambda_f-\frac{9}{2}L_g^2>0$ and
$\zeta_{s}\in \mcL^{2}(\Omega,\mathbb P;H_0^{2})\cap\mcL^{10}(\Omega,\mathbb P;H_0^{1})$,
then there exists a constant $M_{3}>0$ such that
\begin{equation}\label{ineq4}
\mathbb E\|u(t,s,\zeta_{s})\|_{2}^{2}
\leq M_{3}\left(
{\rm{e}}^{-\eta'(t-s)}\mathbb E\|\zeta_{s}\|_{2}^{2}+1\right),
\end{equation}
where $\eta'\in(0,2\lambda_*-2\lambda_f-9L_g^2)$, $M_{3}$ depends only on $\eta$, $p$, $K$ and $\eta'$.
\end{enumerate}
\begin{proof}
(i)
By the product rule, It\^o's formula, (H$_f^3$), (H$_g^1$) and Young's inequality, we have
\begin{align*}
&
\mathbb E\left({\rm{e}}^{\eta p(t-s)}
\|u(t,s,\zeta_{s})\|^{2p}\right)\\\nonumber
&
=\mathbb E\|\zeta_{s}\|^{2p}
+\int_{s}^{t}\eta p{\rm{e}}^{\eta p(\sigma-s)}
\mathbb E\|u(\sigma,s,\zeta_{s})\|^{2p}\d\sigma\\\nonumber
&\quad
+p\mathbb E\int_{s}^{t}\|u(\sigma,s,\zeta_{s})\|^{2p-2}
{\rm{e}}^{\eta p(\sigma-s)}\Big(2\langle(1+i\alpha )
\Delta u(\sigma,s,\zeta_{s}),
u(\sigma,s,\zeta_{s})\rangle\\\nonumber
&\qquad
-2\langle(\gamma(\sigma)+i\beta)|u(\sigma,s,\zeta_{s})|^{2}u(\sigma,s,\zeta_{s}),
u(\sigma,s,\zeta_{s})\rangle\\\nonumber
&\qquad
+2\langle f(\sigma,u(\sigma,s,\zeta_{s})),
u(\sigma,s,\zeta_{s})\rangle
+\|g(\sigma,u(\sigma,s,\zeta_{s}))\|^{2}_{L_{2}(U,L^{2}(\T^{d}))}
\Big)\d\sigma
\\\nonumber
&\quad
+2p(p-1)\mathbb E\int_{s}^{t}{\rm{e}}^{\eta p(\sigma-s)}
\|u(\sigma,s,\zeta_{s})\|^{2p-4}\|\left(
g(\sigma,u(\sigma,s,\zeta_{s}))\right)^{*}
u(\sigma,s,\zeta_{s})\|_{U}^{2}\d\sigma\\\nonumber
&
\leq\mathbb E\|\zeta_{s}\|^{2p}
+\int_{s}^{t}\eta p{\rm{e}}^{\eta p(\sigma-s)}
\mathbb E\|u(\sigma,s,\zeta_{s})\|^{2p}\d\sigma\\\nonumber
&\quad
+p\mathbb E\int_{s}^{t}\|u(\sigma,s,\zeta_{s})\|^{2p-2}
{\rm{e}}^{\eta p(\sigma-s)}
\left(-\theta\|u(\sigma,s,\zeta_{s})\|^{2}
+C_{\veps}\right)\d\sigma,\\\nonumber
\end{align*}
where $\theta:=2\lambda_{*}-2\lambda_{f}-(2p-1)L_{g}^{2}-\veps-\veps L_{g}^{2}$.
When $\veps$ is small enough, it follows from Young's inequality
that there exists constant $M_{1}>0$ such that
$$
\mathbb E\|u(t,s,\zeta_{s})\|^{2p}
\leq {\rm{e}}^{-\eta p(t-s)}\mathbb E\|\zeta_{s}\|^{2p}+M_{1}
$$
for all $1\leq p<\frac{\lambda_*-\lambda_f}{L_g^2}+\frac12$,
where $M_1$ depends on $\eta$, $p$ and $K$.

(ii)
Recall that $u^n(t),t\geq s$ is the solution to
the finite dimensional equation \eqref{GFeq} for all $n>0$.
In view of \eqref{esteqH1}, (H$_f^1$), (H$_g^2$) and Young's inequality, for small $\veps$ we have
\begin{align*}
&
\mathbb E\|u^{n}(t)\|_{1}^{2p}\\
&
\leq\mathbb E\|\zeta_{s}\|_{1}^{2p}+\mathbb E\int_{s}^{t}
p\|u^{n}(\sigma)\|_{1}^{2p-2}
\Big(-2\|\Delta u^{n}(\sigma)\|^{2}
+\veps\|\Delta u^{n}(\sigma)\|^{2}+C_{\veps}
\|f(\sigma,u^{n}(\sigma))\|^{2}\\
&\qquad
+(2p-1)L_{g}^{2}\|u^{n}(\sigma)\|_{1}^{2}
+\veps\|g(\sigma,u^{n}(\sigma))\|_{L_{2}(U,H_0^{1})}^{2}
+C_{\veps}\Big)\d\sigma\\
&
\leq\mathbb E\|\zeta_{s}\|_{1}^{2p}+\mathbb E\int_{s}^{t}
p\|u^{n}(\sigma)\|_{1}^{2p-2}
\Big(\left(-2+\veps\right)\|\Delta u^{n}(\sigma)\|^{2}
+C_{\veps}\|u^{n}(\sigma)\|^{2}\\\nonumber
&\qquad
+\left((2p-1)L_{g}^{2}+\veps L_{g}^{2}\right)
\|u^{n}(\sigma)\|_{1}^{2}+C_{\veps}
\Big)\d\sigma\\
&
\leq\mathbb E\|\zeta_{s}\|_{1}^{2p}+\mathbb E\int_{s}^{t}\left(-\widetilde{\eta}p
\|u^{n}(\sigma)\|_{1}^{2p}
+C_{\veps}\|u^{n}(\sigma)\|^{2p}
+C_{\veps}\right)\d\sigma,
\end{align*}
where $\widetilde{\eta}:=2\lambda_{*}-\veps\lambda_{*}
-(2p-1)L_{g}^{2}-\veps L_{g}^{2}-\veps$. Then one sees that
\begin{align*}
\mathbb E\|u^{n}(t)\|_{1}^{2p}
\leq{\rm{e}}^{-\widetilde{\eta}p(t-s)}
\mathbb E\|\zeta_{s}\|_{1}^{2p}
+\int_{s}^{t}C_{\veps}
{\rm{e}}^{-\widetilde{\eta}p(t-\sigma)}
\mathbb E\|u^{n}(\sigma)\|^{2p}\d\sigma
+\frac{C_{\veps}}{\widetilde{\eta}p}
-\frac{C_{\veps}}{\widetilde{\eta}p}{\rm{e}}^{-\widetilde{\eta}p(t-s)}.
\end{align*}
Therefore, taking $\veps$ small enough such that
$\widetilde{\eta}>\eta$, by \eqref{ineq2} we get
\begin{align}\label{fh1eq}
\mathbb E\|u^n(t)\|_{1}^{2p}
&
\leq{\rm{e}}^{-\widetilde{\eta}p(t-s)}
\mathbb E\|\zeta_{s}\|_{1}^{2p}
+\int_{s}^{t}C{\rm{e}}^{-\widetilde{\eta}p(t-\sigma)}\left(
\mathbb E\|\zeta_{s}\|^{2p}{\rm{e}}^{-\eta p(\sigma-s)}
+1\right)\d\sigma+\frac{C}{\widetilde{\eta}p}\\\nonumber
&
\leq{\rm{e}}^{-\widetilde{\eta}p(t-s)}
\mathbb E\|\zeta_{s}\|_{1}^{2p}
+C\mathbb E\|\zeta_{s}\|^{2p}{\rm{e}}^{-\widetilde{\eta}pt+\eta ps}
\int_{s}^{t}{\rm{e}}^{(\widetilde{\eta}-\eta)p\sigma}\d\sigma
+\frac{C}{\widetilde{\eta}p}\\\nonumber
&
\leq{\rm{e}}^{-\widetilde{\eta}p(t-s)}
\mathbb E\|\zeta_{s}\|_{1}^{2p}
+C\mathbb E\|\zeta_{s}\|^{2p}{\rm{e}}^{-\widetilde{\eta}pt+\eta ps}
\frac{1}{(\widetilde{\eta}-\eta)p}{\rm{e}}^{(\widetilde{\eta}-\eta)pt}
+\frac{C}{\widetilde{\eta}p}\\\nonumber
&
\leq M_{2}\left(\mathbb E\|\zeta_{s}\|_{1}^{2p}
{\rm{e}}^{-\eta p(t-s)}+1\right),
\end{align}
where $M_2$ depends on $p,K,\eta$.
Employing the reflexivity of $\mcL^{2p}(\Omega,\mbP;H^1)$ and \eqref{fh1eq}, we have
\[
\mathbb E\|u(t,s,\zeta_{s})\|_{1}^{2p}
\leq\liminf_{n\rightarrow\infty}\mathbb E\|u^{n}(t)\|_{1}^{2p}
\leq M_2\left(\mbE\|\zeta_s\|_1^{2p}{\rm e}^{-\eta p(t-s)}+1\right).
\]

(iii) By \eqref{H202}, we have
\begin{align*}
\mbE\|u^n(t)\|_{2}^2
&
\leq\mbE\|\zeta_s\|_{2}^2+\mbE\int_s^t\left(-\widetilde{\widetilde{\eta}}\|u^n(\sigma)\|_{2}^2
+C_\veps\|u^n(\sigma)\|_{1}^{10}
+C_\veps\right)\d\sigma,
\end{align*}
where $\widetilde{\widetilde{\eta}}:=(2-\veps)\lambda_*
-(1+\veps)L_g^2-\veps>\eta'$ for $\veps$ small enough.
Since $\frac{\lambda_*-\lambda_f}{L_g^2}+\frac12>5$,
in view of \eqref{fh1eq}, one sees that
\begin{align*}
\mbE\|u^n(t)\|_{2}^2
&
\leq{\rm{e}}^{-\widetilde{\widetilde{\eta}}(t-s)}\mbE\|\zeta_s\|_{2}^2
+\int_s^t C{\rm{e}}^{-\widetilde{\widetilde{\eta}}(t-\sigma)}
\mbE\|u^n(\sigma)\|_{1}^{10}\d\sigma+C\\\nonumber
&
\leq M_3\left(\left(\mbE\|\zeta_s\|_{2}^2+\mbE\|\zeta_s\|_{1}^{10}\right)
{\rm e}^{-\eta'(t-s)}+1\right),
\end{align*}
which completes the proof by the reflexivity of $\mcL^2(\Omega,\mbP;H_0^2)$.
\end{proof}
\end{lemma}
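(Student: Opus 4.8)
The plan is to obtain all three bounds by weighted-energy estimates, exploiting the dissipativity hypothesis $\lambda_{*}-\lambda_{f}-\frac{L_{g}^{2}}{2}>0$, the spectral-gap inequalities $\|u\|_{k+1}^{2}\ge\lambda_{*}\|u\|_{k}^{2}$, and the fact that the cubic nonlinearity always carries a favorable sign in the pairings that occur.

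For (i), apply the product rule together with It\^o's formula to $\mathrm{e}^{\eta p(t-s)}\|u(t,s,\zeta_{s})\|^{2p}$. In the resulting drift, the Laplacian contributes $-2p\|u\|^{2p-2}\|u\|_{1}^{2}\le-2p\lambda_{*}\|u\|^{2p}$; the cubic term equals $-2p\gamma(\sigma)\|u\|^{2p-2}\|u\|_{L^{4}(\T^{d})}^{4}\le0$ (its imaginary part disappears after taking real parts) and is simply discarded; (H$_f^3$) with $y=0$ together with $\|f(\sigma,0)\|\le K$ yields $2\langle f(\sigma,u),u\rangle\le(2\lambda_{f}+\veps)\|u\|^{2}+C_{\veps}$; and (H$_g^1$), combined with $\|g^{*}u\|_{U}^{2}\le\|g\|_{L_{2}(U,L^{2}(\T^{d}))}^{2}\|u\|^{2}$ in the It\^o correction, bounds the noise contribution by $p(2p-1)\|u\|^{2p-2}\big((1+\veps)L_{g}^{2}\|u\|^{2}+C_{\veps}\big)$. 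Collecting, the drift of $\|u\|^{2p}$ is at most $p\|u\|^{2p-2}(-\theta\|u\|^{2}+C_{\veps})$ with $\theta=2\lambda_{*}-2\lambda_{f}-(2p-1)L_{g}^{2}-O(\veps)$, which is strictly positive for small $\veps$ precisely because $p<\frac{\lambda_{*}-\lambda_{f}}{L_{g}^{2}}+\frac12$. Choosing $\eta\in\big(0,2\lambda_{*}-2\lambda_{f}-(2p-1)L_{g}^{2}\big)$ and then $\veps$ small enough that $\theta>\eta$, the $\eta$-weighted expectation has drift bounded by a constant (after one further Young inequality absorbs $C_{\veps}\|u\|^{2p-2}$ into $\|u\|^{2p}$), and integrating in $t$ gives (i) with $M_{1}=C/(\eta p)$.

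For (ii) I would run the same scheme on $\|u^{n}(t)\|_{1}^{2p}$ along the Galerkin approximation, starting from the $H^{1}$-energy identity already recorded in \eqref{esteqH1}: there the cubic term is killed by $\langle(\gamma(\sigma)+i\beta)|u|^{2}u,\Delta u\rangle\le0$, and after bounding $(-2+\veps)\|\Delta u^{n}\|^{2}\le(-2+\veps)\lambda_{*}\|u^{n}\|_{1}^{2}$ and controlling $\|f\|^{2}$, $\|g\|_{L_{2}(U,H_0^{1})}^{2}$ via (H$_f^1$) and (H$_g^2$), the drift of $\|u^{n}\|_{1}^{2p}$ becomes $\le-\widetilde\eta p\|u^{n}\|_{1}^{2p}+C_{\veps}\|u^{n}\|^{2p}+C_{\veps}$ with $\widetilde\eta=2\lambda_{*}-(2p-1)L_{g}^{2}-O(\veps)>\eta$ for small $\veps$. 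Solving this linear differential inequality and substituting the bound from (i) for $\mbE\|u^{n}(\sigma)\|^{2p}$, the elementary identity $\int_{s}^{t}\mathrm{e}^{-\widetilde\eta p(t-\sigma)}\mathrm{e}^{-\eta p(\sigma-s)}\d\sigma\le\frac{1}{(\widetilde\eta-\eta)p}\mathrm{e}^{-\eta p(t-s)}$ (valid since $\widetilde\eta>\eta$) gives $\mbE\|u^{n}(t)\|_{1}^{2p}\le M_{2}\big(\mathrm{e}^{-\eta p(t-s)}\mbE\|\zeta_{s}\|_{1}^{2p}+1\big)$ after using Poincar\'e to absorb $\mbE\|\zeta_{s}\|^{2p}$; then pass $n\to\infty$ by reflexivity of $\mcL^{2p}(\Omega,\mbP;H_0^{1})$ and lower semicontinuity of the norm. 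Part (iii) has the same structure, based on \eqref{H202}: the drift of $\|u^{n}(t)\|_{2}^{2}$ is $\le-\widetilde{\widetilde\eta}\,\|u^{n}\|_{2}^{2}+C_{\veps}\|u^{n}\|_{1}^{10}+C_{\veps}$ with $\widetilde{\widetilde\eta}>\eta'$ for small $\veps$, and since the stronger dissipativity assumption $\lambda_{*}-\lambda_{f}-\frac92L_{g}^{2}>0$ is exactly $\frac{\lambda_{*}-\lambda_{f}}{L_{g}^{2}}+\frac12>5$, part (ii) applied with $p=5$ supplies $\mbE\|u^{n}(\sigma)\|_{1}^{10}\le C\big(\mathrm{e}^{-5\eta(\sigma-s)}\mbE\|\zeta_{s}\|_{1}^{10}+1\big)$; inserting this and integrating exactly as in (ii), then invoking reflexivity of $\mcL^{2}(\Omega,\mbP;H_0^{2})$, yields (iii).

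The step I expect to be the main technical obstacle is the bookkeeping of the $\veps$-dependence: one has to verify that the slack in each strict inequality ($p<\frac{\lambda_{*}-\lambda_{f}}{L_{g}^{2}}+\frac12$, and $\lambda_{*}-\lambda_{f}-\frac92L_{g}^{2}>0$ for (iii)) is enough for the auxiliary decay coefficients $\theta,\widetilde\eta,\widetilde{\widetilde\eta}$ to simultaneously exceed the prescribed rates $\eta$ (resp.\ $\eta'$) after all the Young absorptions and after feeding the lower-order estimate into the next one. By contrast the cubic term is never an obstacle, since in each of the pairings $\langle|u|^{2}u,u\rangle$, $\langle(\gamma+i\beta)|u|^{2}u,\Delta u\rangle$ it contributes with the sign we want.
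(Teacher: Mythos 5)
Your proposal is correct and follows essentially the same route as the paper's proof: a weighted It\^o/energy estimate for (i), the Galerkin $H^1$-estimate \eqref{esteqH1} with the bound from (i) fed into the resulting linear differential inequality for (ii), and the $H^2$-estimate \eqref{H202} with part (ii) at $p=5$ (exploiting $\lambda_*-\lambda_f-\tfrac92L_g^2>0\Leftrightarrow\tfrac{\lambda_*-\lambda_f}{L_g^2}+\tfrac12>5$) for (iii), with the limit passage by reflexivity and weak lower semicontinuity. The $\veps$-bookkeeping you flag as the main concern works out exactly as you describe and matches the paper's choice of $\theta$, $\widetilde\eta$, $\widetilde{\widetilde\eta}$.
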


\begin{prop}\label{Boundedth}
Consider equation \eqref{maineq1}. Assume that conditions
{\rm{(H$_f^1$)}}, {\rm{(H$_f^3$)}} and {\rm{(H$_g^1$)}} hold. Suppose further that
$\lambda_{*}-\lambda_{f}-\frac{L_{g}^{2}}{2}>0$ and
$\gamma(t)\geq\frac{|\beta|}{\sqrt3}$ for all $t\in\R$. Then there is a
unique $\mcL^{2}(\Omega,\P;L^{2}(\T^{d}))$-bounded solution
$u(t)$, $t\in\mathbb{R}$ to equation \eqref{maineq1},
and the mapping
$\widehat{\mu}:\mathbb{R}\rightarrow Pr_{2}(L^{2}(\T^{d}))$,
defined by $\widehat{\mu}(t):=\mathbb P\circ[u(t)]^{-1}$,
is unique with flow property, i.e.
$\mu(t,s,\widehat{\mu}(s))=\widehat{\mu}(t)$
for all $t\geq s$. Moreover,
\begin{enumerate}
  \item if {\rm{(H$_g^2$)}} holds, then
  \begin{equation}\label{BH01}
  \sup_{t\in\R}\mbE\|u(t)\|_{1}^{2p}<\infty,
  \end{equation}
  where $1\leq p<\frac{\lambda_*-\lambda_f}{L_g^2}+\frac12$;
  \item
  if {\rm{(H$_f^2$)}}, {\rm{(H$_g^2$)}}, {\rm{(H$_g^3$)}} hold and
$\lambda_*-\lambda_f-\frac{9}{2}L_g^2>0$, then
\begin{equation}\label{BH02}
\sup_{t\in\R}\mbE\|u(t)\|_{2}^{2}<\infty,
\end{equation}
\end{enumerate}
Here $\mu(t,s,\mu_{0})$ denotes the distribution of $u(t,s,\zeta_{s})$
on $L^{2}(\T^{d})$, with $\mu_{0}=\mathbb P\circ\zeta^{-1}_{s}$.
\end{prop}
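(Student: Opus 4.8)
The plan is to construct the $\mcL^{2}(\Omega,\P;L^{2}(\T^{d}))$-bounded solution as the limit, as $n\to\infty$, of the solutions $u_{n}(t):=u(t,-n,0)$ of \eqref{maineq1} started from $0$ at time $-n$, and the whole argument hinges on a contraction estimate. First I would apply It\^o's formula to $\|u(t,s,x)-u(t,s,y)\|^{2}$ for two solutions with initial data $x,y\in\mcL^{2}(\Omega,\P;L^{2}(\T^{d}))$; using integration by parts together with Poincar\'e's inequality for the Laplacian term, Lemma \ref{Disscubic} (which, since $\gamma(\cdot)\geq\frac{|\beta|}{\sqrt{3}}$, makes the contribution of the difference of the cubic terms nonpositive), {\rm(H$_f^3$)} and {\rm(H$_g^1$)}, this gives, after taking expectations, $\frac{\d}{\d t}\mbE\|u(t,s,x)-u(t,s,y)\|^{2}\leq-\kappa_{0}\,\mbE\|u(t,s,x)-u(t,s,y)\|^{2}$ with $\kappa_{0}:=2\lambda_{*}-2\lambda_{f}-L_{g}^{2}$, which is positive precisely because $\lambda_{*}-\lambda_{f}-\frac{L_{g}^{2}}{2}>0$. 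Gronwall's lemma then yields
\[
\mbE\|u(t,s,x)-u(t,s,y)\|^{2}\leq{\rm e}^{-\kappa_{0}(t-s)}\mbE\|x-y\|^{2},\qquad t\geq s,
\]
which simultaneously gives continuous dependence on the initial datum in $\mcL^{2}(\Omega,\P;L^{2}(\T^{d}))$ and, via uniqueness from Theorem \ref{wellD}, the cocycle identity $u(t,s,x)=u(t,r,u(r,s,x))$ for $s\leq r\leq t$.

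Next I would show $\{u_{n}(t)\}$ is Cauchy in $\mcL^{2}(\Omega,\P;L^{2}(\T^{d}))$ for each fixed $t$. For $m>n$ large enough that $-n\leq t$, the cocycle identity gives $u_{m}(t)=u(t,-n,u_{m}(-n))$, so the above contraction estimate and Lemma \ref{essol}(i) with $p=1$ (applied on $[-m,-n]$, which bounds $\mbE\|u_{m}(-n)\|^{2}$ by the constant $M_{1}$, uniformly in $m,n$, since the initial datum is $0$) give $\mbE\|u_{m}(t)-u_{n}(t)\|^{2}\leq M_{1}{\rm e}^{-\kappa_{0}(t+n)}\to0$. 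Denote the limit by $u(t)$; letting $m\to\infty$ gives $\mbE\|u(t)\|^{2}\leq M_{1}$ for all $t$, and passing to the limit in $u_{n}(t)=u(t,s,u_{n}(s))$, using the $\mcL^{2}$-continuous dependence, shows $u$ is a (continuous, adapted) solution of \eqref{maineq1} on all of $\R$, hence an $\mcL^{2}(\Omega,\P;L^{2}(\T^{d}))$-bounded one. For uniqueness, any other bounded solution $v$ satisfies $v(t)=v(t,s,v(s))$, so the contraction estimate gives $\mbE\|u(t)-v(t)\|^{2}\leq{\rm e}^{-\kappa_{0}(t-s)}\big(2\sup_{r}\mbE\|u(r)\|^{2}+2\sup_{r}\mbE\|v(r)\|^{2}\big)\to0$ as $s\to-\infty$. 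The flow property of $\widehat{\mu}(t):=\P\circ[u(t)]^{-1}$ then follows from the Markov property of \eqref{maineq1} (the law of $u(t,s,\zeta_{s})$ depends on the $\mathcal F_{s}$-measurable $\zeta_{s}$ only through $\mathscr L(\zeta_{s})$) applied with $\zeta_{s}=u(s)$ to $u(t)=u(t,s,u(s))$, while uniqueness of $\widehat{\mu}$ with the flow property follows from the analogous contraction in the Wasserstein distance $W_{2}$, obtained from the displayed estimate by coupling.

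For the higher-order bounds \eqref{BH01} and \eqref{BH02} I would not try to push the contraction argument, which only controls the $L^{2}(\T^{d})$-norm; instead I would exploit that the approximating datum $0$ lies in $H_{0}^{1}$, respectively in $H_{0}^{2}\cap H_{0}^{1}$, so that Lemma \ref{essol}(ii), respectively (iii), gives $\mbE\|u_{n}(t)\|_{1}^{2p}\leq M_{2}$ for $1\leq p<\frac{\lambda_{*}-\lambda_{f}}{L_{g}^{2}}+\frac12$, respectively $\mbE\|u_{n}(t)\|_{2}^{2}\leq M_{3}$, uniformly in $n$ and $t\geq-n$. For fixed $t$ the sequence $\{u_{n}(t)\}$ is then bounded in the reflexive space $\mcL^{2p}(\Omega,\P;H_{0}^{1})$, respectively $\mcL^{2}(\Omega,\P;H_{0}^{2})$, hence admits a weakly convergent subsequence whose limit must be $u(t)$ (since $u_{n}(t)\to u(t)$ strongly in $\mcL^{2}(\Omega,\P;L^{2}(\T^{d}))$); weak lower semicontinuity of the norm then yields $\mbE\|u(t)\|_{1}^{2p}\leq M_{2}$, respectively $\mbE\|u(t)\|_{2}^{2}\leq M_{3}$, for every $t\in\R$.

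The main obstacle I anticipate is precisely this last point: the construction produces the bounded solution only as an $\mcL^{2}(\Omega,\P;L^{2}(\T^{d}))$-limit, so the higher regularity cannot come from the dissipativity/Gronwall machinery directly but must be read off from the a priori estimates of Lemma \ref{essol} for the fixed initial datum $0$ and transferred to the limit by weak compactness — this is where the specific structure of the stochastic CGL equation (and the Galerkin-type estimates behind Lemma \ref{essol}), rather than abstract dissipativity, is really needed. All the remaining steps are a by-now-standard combination of dissipativity, Gronwall's lemma, and the cocycle/Markov property.
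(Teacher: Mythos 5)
Your proposal is correct and follows essentially the same route as the paper's proof: the exponential contraction estimate from It\^o's formula with {\rm(H$_f^3$)}, {\rm(H$_g^1$)} and the dissipative cubic term, the Cauchy sequence $u(t,-n,0)$ controlled via Lemma \ref{essol}(i), uniqueness and the flow property from the same contraction (transferred to $W_2$ by coupling), and the higher-order bounds \eqref{BH01}--\eqref{BH02} obtained exactly as you describe, by uniform estimates from Lemma \ref{essol}(ii)--(iii) for the zero initial datum plus weak compactness and lower semicontinuity in the reflexive spaces $\mcL^{2p}(\Omega,\P;H_0^1)$ and $\mcL^{2}(\Omega,\P;H_0^2)$. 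The only cosmetic difference is that you identify the limit as a solution by passing to the limit in the flow identity $u_n(t)=u(t,s,u_n(s))$, whereas the paper refers back to the Galerkin argument of Theorem \ref{wellD}; both are valid.
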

\begin{proof}
Let $u_1(\cdot):=u_1(\cdot,s,\zeta^1_{s})$ and $u_2(\cdot):=u_2(\cdot,s,\zeta^2_{s})$
be two solutions to \eqref{maineq1}.
Define $\theta:=2\lambda_{*}-2\lambda_{f}-L_{g}^{2}$.
In view of It\^o's formula, the product rule, (H$_f^3$) and (H$_g^1$), we obtain
\begin{align*}
&
\mathbb E\left({\rm{e}}^{\theta(t-s)}
\|u_1(t)-u_2(t)\|^{2}\right)\\\nonumber
&
=\mathbb E\|\zeta^1_{s}-\zeta_s^2\|^{2}
+\int_{s}^{t}\theta
{\rm{e}}^{\theta(\sigma-s)}
\mathbb E\|u_1(\sigma)-u_2(\sigma)\|^{2}\d\sigma \\\nonumber
&\quad
+\mathbb E\int^{t}_{s}{\rm{e}}^{\theta(\sigma-s)}\Big(2\langle (1+i\alpha )\Delta\left(u_1(\sigma)
-u_2(\sigma)\right),u_1(\sigma)
-u_2(\sigma)\rangle\\\nonumber
&\qquad
-2\langle(\gamma(\sigma)+i\beta)\left(|u_1(\sigma)|^{2}
u_1(\sigma)
-|u_2(\sigma)|^{2}u_2(\sigma)\right),
u_1(\sigma)-u_2(\sigma)\rangle\\\nonumber
&\qquad
+2\langle f(\sigma,u_1(\sigma))
-f(\sigma,u_2(\sigma)),
u_1(\sigma)-u_2(\sigma)\rangle\\\nonumber
& \qquad
+\|g(\sigma,u_1(\sigma))
-g(\sigma,u_2(\sigma))\|^{2}_{L_{2}(U,L^{2}(\T^{d}))}
\Big)\d\sigma
\leq \mathbb E\|\zeta^1_{s}-\zeta^2_{s}\|^{2}.
\end{align*}
Hence for all $t\geq s$
\begin{equation}\label{gasmseq00}
  \mathbb E\|u_1(t)-u_2(t)\|^{2}\leq
  {\rm{e}}^{-(2\lambda_{*}-2\lambda_{f}-L_{g}^{2})(t-s)}
  \mathbb E\|\zeta^1_{s}-\zeta_s^2\|^{2},
\end{equation}
which implies that
\begin{align*}
\mathbb E\|u_{n}(t)-u_{m}(t)\|^{2}
\leq M_{1}{\rm{e}}^{-(2\lambda_{*}-2\lambda_{f}-L_{g}^{2})(t+m)}
\end{align*}
for all $t\geq-m\geq-n$ by \eqref{ineq2},
where $u_n(t):=u(t,-n,0)$ for all $n\in\N^+$.
Letting $n>m$, $m\rightarrow\infty$, we have
$$\mathbb E\|u_{n}(t)-u_{m}(t)\|^{2}\rightarrow 0.$$
Since $\mcL^{2}(\Omega,\mathbb P;L^{2}(\T^{d}))$ is complete,
there exists a process $u(t)$, $t\in\R$ such that
\begin{equation}\label{Btheq1}
u_{n}(t)\rightarrow u(t) \quad {\rm {in}}~
\mcL^{2}(\Omega,\mathbb P;L^{2}(\T^{d}))
\end{equation}
for any  $t\in\R$.
And it follows from \eqref{ineq2} that
\begin{align*}
\sup\limits_{t\in\mathbb{R}}\mbE\|u(t)\|^{2}
\leq \sup_{t\in\R}\left(\liminf_{n\rightarrow\infty}\mbE\|u(t,-n,0)\|^2\right)
\leq M_{1},
\end{align*}
which implies that
\[
\sup_{t\in\mathbb{R}}\int_{L^{2}(\T^{d})}\|x\|^{2}\widehat{\mu}(t)
(\d x)<\infty,
\]
where $\widehat{\mu}(t):=\mathscr L(X(t)),t\in\R$.

Similar to the proof of Theorem \ref{wellD} in Appendix \ref{eupf},
we can also prove that the limit process $u(\cdot)$ in
\eqref{Btheq1} is a solution to equation \eqref{maineq1}.
The uniqueness of $\mcL^{2}(\Omega,\P;L^{2}(\T^{d}))$-bounded solution
follows from \eqref{gasmseq00}.

The next goal is to prove that
$\widehat{\mu}:\R\rightarrow Pr_{2}(L^{2}(\T^{d}))$ is unique with
flow property. In view of the Chapman-Kolmogorov equation and
the Feller property, we get
$$\mu(t,s,\widehat{\mu}(s))=\widehat{\mu}(t).$$
Suppose that $\mu_{1},\mu_{2}:\R\rightarrow Pr_{2}(L^{2}(\T^{d}))$
satisfy the flow property,
let $\zeta_{n,1}$ and $\zeta_{n,2}$ be random variables with
distributions $\mu_{1}(-n)$ and $\mu_{2}(-n)$ respectively.
Then consider  solutions $u(t,-n,\zeta_{n,1})$ and
$u(t,-n,\zeta_{n,2})$ on $[-n,\infty)$, we have
\begin{align*}
 W_{2}(\mu_{1}(t),\mu_{2}(t))
&
 =W_{2}(\mu(t,-n,\mu_{1}(-n)),\mu(t,-n,\mu_{2}(-n))) \\
&
 \leq \left(\mathbb E\|u(t,-n,\zeta_{n,1})
  -u(t,-n,\zeta_{n,2})\|^{2}\right)^{1/2} \\
&
 \leq {\rm{e}}^{-(\lambda_{*}-\lambda_{f}-\frac{L_{g}^{2}}{2})(t+n)}
 \left(\mathbb E\|\zeta_{n,1}-\zeta_{n,2}\|^{2}\right)^{1/2}
 \rightarrow0 \quad {\rm{as}}~n\rightarrow\infty.
\end{align*}
Thus, $\mu_{1}(t)=\mu_{2}(t)$ for all $t\in\R$.

Assume that (H$_f^2$), (H$_g^2$) and (H$_g^3$) hold. By \eqref{ineq3} and \eqref{ineq4}, we have
$$
\sup_{t\in\R}\mbE\|u_{n}(t)\|_{1}^{2p}+
\sup_{t\in\R}\mbE\|u_{n}(t)\|_{2}^{2}<\infty,
$$
where $1\leq p<\frac{\lambda_*-\lambda_f}{L_g^2}+\frac12$.
Then there exists a subsequence of $\{u_{n}(t)\}$ which we still denote by
$\{u_{n}(t)\}$ such that $u_n(t)\rightarrow u(t)$
weakly in $\mcL^{2p}(\Omega,\mathbb P;H_0^{1})$ and
$\mcL^{2}(\Omega,\mathbb P;H_0^{2})$ for all $t\in\R$.
Therefore, we have
$$
\sup_{t\in\R}\mbE\|u(t)\|_{1}^{2p}+
\sup_{t\in\R}\mbE\|u(t)\|_{2}^{2}<\infty.
$$
\end{proof}

\begin{remark}\rm
(i) It follows from \eqref{ineq2} that
\begin{equation}\label{B2p}
\sup_{t\in\R}\mbE\|u(t)\|^{2p}\leq\sup_{t\in\R}
\left(\liminf_{n\rightarrow\infty}\mbE\|u(t,-n,0)\|^{2p}\right)
<\infty,
\end{equation}
where $p>1$ is as in \eqref{ineq2}.

(ii) Note that this
$\mcL^{2}(\Omega,\mathbb P;L^{2}(\T^{d}))$-bounded solution is
$T$-periodic provided $f$ and $g$ are $T$-periodic.
The proof is similar to \cite[Theorem 4.1]{CML2020}.
\end{remark}

\begin{definition}[See \cite{FL}]\rm
Let $s\in\R$. A solution $u(t),t\geq s$ of equation \eqref{maineq1} is called
{\em stable in square-mean sense}, if for each $\epsilon>0$,
there exists $\delta>0$ such that for all $t\geq s$
\begin{equation*}
  \mathbb E\|u(t,s,\zeta_{s})-u(t)\|^{2}<\epsilon,
\end{equation*}
whenever $\mathbb E\|\zeta_{s}-u(s)\|^{2}<\delta$.
The solution $u(t),t\geq s$ is said to be {\em asymptotically
stable in square-mean sense} provided it is stable in square-mean sense and
\begin{equation}\label{stab}
  \lim_{t\rightarrow\infty}\mathbb E\|u(t,s,\zeta_{s})-u(t)\|^{2}=0.
\end{equation}
We say $u(t),t\geq s$ is {\em globally asymptotically stable in
square-mean sense} provided \eqref{stab} holds for any
$\zeta_{s}\in \mcL^{2}(\Omega,\mathbb P;L^{2}(\T^{d}))$.
\end{definition}

By \eqref{gasmseq00}, we have the following lemma, which states that
the $\mcL^{2}(\Omega,\P;L^{2}(\T^{d}))$-bounded solution of equation
\eqref{maineq1} is globally asymptotically stable in square-mean sense.
\begin{lemma}\label{gasms}
Consider equation \eqref{maineq1}.
Suppose that {\rm{(H$_f^1$)}}, {\rm{(H$_f^3$)}} and {\rm{(H$_g^1$)}} hold.
Assume further that $\gamma(t)\geq\frac{|\beta|}{\sqrt3}$ for all $t\in\R$ and
$\lambda_{*}-\lambda_{f}-\frac{L_{g}^{2}}{2}>0$. Then the unique
$\mcL^{2}(\Omega,\P;L^{2}(\T^{d}))$-bounded solution
$u(\cdot)$ of equation \eqref{maineq1} is
globally asymptotically stable in square-mean sense.
Moreover, let $s\in\R$. For any $t\geq s$ and $\zeta_{s}\in
\mcL^{2}(\Omega,\mathbb P;L^{2}(\T^{d}))$ we have
\begin{equation}\label{gasmseq}
  \mathbb E\|u(t,s,\zeta_{s})-u(t)\|^{2}
  \leq {\rm{e}}^{-(2\lambda_{*}-2\lambda_{f}-L_{g}^{2})(t-s)}
  \mathbb E\|\zeta_{s}-u(s)\|^{2}.
\end{equation}
\end{lemma}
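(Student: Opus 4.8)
The plan is to read off the entire lemma from the exponential contraction estimate \eqref{gasmseq00} that was already derived in the proof of Proposition \ref{Boundedth}. Recall that \eqref{gasmseq00} was obtained by applying It\^o's formula and the product rule to ${\rm e}^{(2\lambda_{*}-2\lambda_{f}-L_{g}^{2})(t-s)}\|u_1(t)-u_2(t)\|^{2}$ for any two solutions $u_1,u_2$ of \eqref{maineq1} with initial data $\zeta_s^1,\zeta_s^2$, and using {\rm(H$_f^3$)}, {\rm(H$_g^1$)}, the dissipativity of the Laplacian ($\langle(1+i\alpha)\Delta v,v\rangle=-\|v\|_1^2\leq-\lambda_*\|v\|^2$), and the monotonicity of the cubic difference from Lemma \ref{Disscubic} (valid since $\gamma(\cdot)\geq|\beta|/\sqrt3$); it reads
\[
\mbE\|u_1(t)-u_2(t)\|^{2}\leq{\rm e}^{-(2\lambda_{*}-2\lambda_{f}-L_{g}^{2})(t-s)}\,\mbE\|\zeta^1_{s}-\zeta_s^2\|^{2},\qquad t\geq s.
\]

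First I would observe that, by Proposition \ref{Boundedth}, the $\mcL^{2}(\Omega,\P;L^{2}(\T^{d}))$-bounded solution $u(\cdot)$ restricted to $[s,\infty)$ is a solution of \eqref{maineq1} with initial value $u(s)\in\mcL^{2}(\Omega,\P;L^{2}(\T^{d}))$ at time $s$; by the uniqueness part of Theorem \ref{wellD} it therefore coincides with $u(\cdot,s,u(s))$. Applying the displayed estimate with $u_1(\cdot)=u(\cdot,s,\zeta_s)$ and $u_2(\cdot)=u(\cdot,s,u(s))=u(\cdot)$ then gives \eqref{gasmseq} at once. Since the standing assumption $\lambda_{*}-\lambda_{f}-\frac{L_{g}^{2}}{2}>0$ is the same as $2\lambda_{*}-2\lambda_{f}-L_{g}^{2}>0$, the exponential on the right of \eqref{gasmseq} is bounded by $1$ for all $t\geq s$ and tends to $0$ as $t\to\infty$.

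From this, stability in square-mean sense follows by taking $\delta=\epsilon$ in the definition, uniformly in $t\geq s$; and for any $\zeta_{s}\in\mcL^{2}(\Omega,\P;L^{2}(\T^{d}))$ we obtain
\[
\lim_{t\to\infty}\mbE\|u(t,s,\zeta_{s})-u(t)\|^{2}\leq\mbE\|\zeta_{s}-u(s)\|^{2}\lim_{t\to\infty}{\rm e}^{-(2\lambda_{*}-2\lambda_{f}-L_{g}^{2})(t-s)}=0,
\]
which is \eqref{stab}; as $\zeta_{s}$ is arbitrary, this is exactly global asymptotic stability in square-mean sense. There is essentially no obstacle here: the only point requiring (a trivial amount of) care is the identification of the restricted bounded solution $u|_{[s,\infty)}$ with the solution issued from $u(s)$, which is immediate from uniqueness, so the whole argument is a one-line application of \eqref{gasmseq00} followed by letting $t\to\infty$.
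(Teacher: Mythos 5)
Your proof is correct and is exactly the paper's argument: the paper itself introduces Lemma \ref{gasms} with the phrase ``By \eqref{gasmseq00}, we have the following lemma,'' i.e.\ it reads \eqref{gasmseq} directly off the contraction estimate established in the proof of Proposition \ref{Boundedth}, just as you do. Your additional remark identifying $u|_{[s,\infty)}$ with $u(\cdot,s,u(s))$ via uniqueness is a reasonable (if routine) point the paper leaves implicit.
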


\subsection{The second Bogolyubov theorem}
In this subsection, we firstly show that the
$\mcL^{2}(\Omega,\P;L^{2}(\T^{d}))$-bounded solution for equation \eqref{maineq1} is
strongly compatible in distribution (see Definition \ref{compatible}),
which implies that this bounded solution inherits the
recurrent properties of the coefficients in
distribution sense by Shcherbakov's comparability method; see Section 2.3 in \cite{CLAM2021} for details
about this method.
Then we prove that this bounded solution for \eqref{eqG2.1}
converges to the stationary solution for \eqref{eqG5_1}
when $\veps$ goes to zero. For this,
we need the following condition.
\begin{itemize}
\item [\textbf{(H$_c$)}] $f$ and $g$ are continuous in $t$ uniformly
  with respect to $x$ on each bounded subset
  $Q\subset L^{2}(\T^{d})$.
\end{itemize}

\begin{remark}\rm\label{hulllem}
(i) If $f$ and $g$ satisfy (H$_f^1$), (H$_f^2$), (H$_f^3$), (H$_g^1$), (H$_g^2$)
    and (H$_g^3$), then every pair of functions
    $\left(\widetilde{f},\widetilde{g}\right)\in\mathcal H(f,g)$
    possess the same property with the same constants. Recall that
      \[
      \mathcal H(f,g):=\overline{\left\{\left(f^{\tau},g^{\tau}\right):
      \tau\in\R\right\}},
      \]
    where $f^{\tau}$ is the {\em$\tau$-translation} of $f$ defined by
    $f^{\tau}(t,x):=f(t+\tau,x)$ for all $t\in\R$ and $x\in L^{2}(\T^{d})$.

(ii) If $f$ and $g$ satisfy the conditions (H$_f^1$), (H$_g^1$) and (H$_c$), then
      $f\in BUC(\R\times L^{2}(\T^{d}), L^{2}(\T^{d}))$,
      $g\in BUC(\R\times L^{2}(\T^{d}),L_{2}(U,L^{2}(\T^{d})))$
      and
      $\mathcal H(\gamma,f,g)\subset C(\R,\R_+)\times BUC(\R\times L^{2}(\T^{d}), L^{2}(\T^{d}))
      \times BUC(\R\times L^{2}(\T^{d}),L_{2}(U,L^{2}(\T^{d})))$
      is a shift dynamical system. See Appendix \ref{shiftDS} for more details about
      the space $BUC$ and shift dynamical systems.

(iii) By Remark \ref{reuniest},
\eqref{ineq2}--\eqref{ineq3} and \eqref{BH01} hold
for the averaged equation \eqref{eqG5_1}.
\end{remark}

Let $\varphi\in C(\R,\mathcal X)$. Denote by $\mathfrak{N}_{\varphi}$
(respectively, $\mathfrak{M}_{\varphi}$) the space of all sequences
$\{t_{n}\}_{n=1}^{\infty}$ such that $\varphi(\cdot+t_{n})$ converges
to $\varphi(\cdot)$ (respectively, $\varphi(\cdot+t_{n})$ converges)
uniformly on any compact interval.

\begin{definition}\label{compatible} \rm
Let $\varphi (t),t\in\mathbb R$ be a solution of
equation \eqref{maineq1}. Then $\varphi$ is called {\em
compatible} (respectively, {\em strongly compatible}) {\em in
distribution} if the following conditions are fulfilled:
\begin{enumerate}
\item
there exists a bounded closed subset
$\mathcal Q\subset L^{2}(\Omega,\mathbb P;L^{2}(\T^{d})$)
such that
$\varphi(\mathbb R)\subseteq \mathcal Q$;
\item
$\mathfrak N_{(f,g)}\subseteq \widetilde{\mathfrak N}_{\varphi}$
(respectively,
$\mathfrak M_{(f,g)}\subseteq \widetilde{\mathfrak M}_{\varphi}$),
where $\widetilde{\mathfrak N}_{\varphi}$
(respectively, $\widetilde{\mathfrak M}_{\varphi}$) means the set of
all sequences $\{t_n\}\subset\mathbb R$ such that the sequence
$\{\varphi(\cdot+t_n)\}$ converges to $\varphi(\cdot)$
(respectively, $\{\varphi(\cdot+t_n)\}$ converges) in distribution
uniformly on any compact interval.
\end{enumerate}
\end{definition}

With the help of the above estimates, we can show that
the bounded solution is compatible in distribution and establish
the second Bogolyubov theorem for stochastic CGL equations;
the idea and argument are similar to \cite{CLAM2021}.
For brevity, we state these results without proof.
\begin{theorem}\label{compth}
Let $\gamma(\cdot)\geq\frac{|\beta|}{\sqrt3}$.
Suppose that {\rm(H$_f^1$)}, {\rm(H$_f^3$)}, {\rm(H$_g^1$)},
{\rm(H$_g^2$)} and {\rm(H$_c$)} hold and
$\lambda_{*}-\lambda_{f}-\frac{L_{g}^{2}}{2}>0$. Then the unique
$\mcL^{2}(\Omega,\P;L^{2}(\T^{d}))$-bounded solution $u(\cdot)$
of \eqref{maineq1} is strongly compatible in distribution.
\end{theorem}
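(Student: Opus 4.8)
The plan is to establish strong compatibility in distribution by verifying the two conditions of Definition~\ref{compatible} for the $\mcL^2(\Omega,\P;L^2(\T^d))$-bounded solution $u(\cdot)$ whose existence and uniqueness are guaranteed by Proposition~\ref{Boundedth}. For condition~(i), I would take $\mathcal Q$ to be the closed ball in $\mcL^2(\Omega,\P;L^2(\T^d))$ of radius $M_1^{1/2}$ centered at the origin, where $M_1$ is the constant from \eqref{ineq2}; the bound \eqref{B2p} together with $\sup_{t\in\R}\mbE\|u(t)\|^2\le M_1$ (established in the proof of Proposition~\ref{Boundedth}) shows $u(\R)\subseteq\mathcal Q$. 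The substantive content is condition~(ii): one must show $\mathfrak M_{(f,g)}\subseteq\widetilde{\mathfrak M}_u$, i.e. whenever a sequence $\{t_n\}$ is such that the translates $(f^{t_n},g^{t_n})$ (and, implicitly, $\gamma^{t_n}$, since $\gamma$ is part of the coefficient data) converge uniformly on compact intervals, then the laws of the translated solutions $u(\cdot+t_n)$ converge in distribution uniformly on compact intervals.

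The key steps are as follows. First, fix a sequence $\{t_n\}\in\mathfrak M_{(f,g)}$; by passing to a subsequence and using that $\mathcal H(\gamma,f,g)$ is a shift dynamical system (Remark~\ref{hulllem}(ii)), the translates $(\gamma^{t_n},f^{t_n},g^{t_n})$ converge to some limit $(\tilde\gamma,\tilde f,\tilde g)\in\mathcal H(\gamma,f,g)$, which by Remark~\ref{hulllem}(i) satisfies (H$_f^1$), (H$_f^3$), (H$_g^1$), (H$_g^2$) with the same constants, and by Remark~\ref{hulllem}(iii) the dissipativity $\lambda_*-\lambda_f-\tfrac{L_g^2}{2}>0$ persists. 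By Proposition~\ref{Boundedth} applied to the limiting equation, there is a unique $\mcL^2(\Omega,\P;L^2(\T^d))$-bounded solution $\tilde u(\cdot)$ of the limiting equation. Second, observe that $u(\cdot+t_n)$ is the $\mcL^2$-bounded solution of the equation with coefficients $(\gamma^{t_n},f^{t_n},g^{t_n})$, since translation maps bounded solutions to bounded solutions. Third, use Lemma~\ref{conpth} (continuous dependence on coefficients and initial data in the Wasserstein metric): on each interval $[-m,m]$, starting both $u(\cdot+t_n)$ and $\tilde u(\cdot)$ from a common reference time $s=-m$ with initial laws $\mathscr L(u(s+t_n))$ and $\mathscr L(\tilde u(s))$, Lemma~\ref{conpth} gives $\sup_{-m\le t\le m}W_2(\mathscr L(u(t+t_n)),\mathscr L(\tilde u(t)))\to 0$ provided $W_2(\mathscr L(u(s+t_n)),\mathscr L(\tilde u(s)))\to 0$. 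The latter is exactly where the dissipative exponential estimate \eqref{gasmseq00} enters: for any $t\ge s$,
\begin{equation*}
W_2(\mathscr L(u(t+t_n)),\mathscr L(\tilde u(t)))
\le \mbE^{1/2}\|u(t+t_n)-\tilde u(t)\|^2,
\end{equation*}
but to compare $u(\cdot+t_n)$ and $\tilde u(\cdot)$ one iterates: start both from a far-past time $-N$ with arbitrary initial data, apply \eqref{gasmseq00}-type contraction to kill the dependence on initial data, and use the coefficient convergence on the fixed window. Since $W_2$ metrizes weak convergence on $Pr_2(L^2(\T^d))$, this yields convergence in distribution uniformly on $[-m,m]$, and letting $m\to\infty$ through a diagonal argument finishes the verification that $\{t_n\}\in\widetilde{\mathfrak M}_u$.

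The main obstacle is the interplay in the third step between the two limits --- the far-past initial time $N\to\infty$ needed for the contraction estimate \eqref{gasmseq00} to forget initial conditions, and the coefficient-perturbation estimate from Lemma~\ref{conpth} which only controls things on compact intervals. One must choose $N=N(m,\eta)$ large enough that the contraction term ${\rm e}^{-(2\lambda_*-2\lambda_f-L_g^2)(-m+N)}$ is below $\eta$ uniformly, then invoke Lemma~\ref{conpth} on $[-N,m]$; because the constant $C_T$ in Lemma~\ref{conpth} depends on $T=N+m$, one must verify that the coefficient convergence is strong enough (uniform on the compact interval $[-N,m]$, which it is, since $[-N,m]$ is compact and $\{t_n\}\in\mathfrak M_{(f,g)}$) to beat this constant for each fixed $m$. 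Since the authors state the result without proof and defer to the argument of \cite{CLAM2021}, a reasonable write-up would carry out this $\eta$-$N$-$m$ bookkeeping carefully and cite the comparability machinery of \cite{CLAM2021} for the passage from strong compatibility in distribution to inheritance of the specific recurrence properties.
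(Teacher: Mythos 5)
Your overall strategy is the one the paper intends (it defers to the argument of \cite{CLAM2021}): use the exponential contraction \eqref{gasmseq00} to forget initial data in the far past, compare the resulting solutions with a common initial datum via the continuous-dependence Lemma \ref{conpth} on a compact window $[-N,m]$, and then carry out the $\eta$--$N$--$m$ bookkeeping. Two points need tightening. First, \eqref{gasmseq00} compares two solutions of the \emph{same} equation, so the comparison of $u(\cdot+t_n)$ with $\tilde u(\cdot)$ must be written as a three-term triangle inequality: $u(\cdot+t_n)$ versus the solution of the translated equation started from $0$ at time $-N$ (contraction), that solution versus the solution of the limit equation started from $0$ at time $-N$ (same initial law, Lemma \ref{conpth}), and the latter versus $\tilde u(\cdot)$ (contraction again). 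Your phrase ``start both from a far-past time $-N$ with arbitrary initial data, apply \eqref{gasmseq00}-type contraction'' glosses over the fact that the two processes being compared solve different equations; the decomposition above is what actually makes the estimate legitimate.

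Second, and more substantively, the handling of $\gamma$ is a genuine gap as written. Extracting a convergent subsequence of $\gamma^{t_n}$ requires precompactness of $\{\gamma^\tau:\tau\in\R\}$ in $C(\R,\R)$, which the hypothesis $\gamma\in C_b(\R,\R_+)$ alone does not provide; and even granting a convergent subsequence, different subsequences could converge to different limits $\tilde\gamma$, producing different limit bounded solutions, so convergence of the \emph{full} sequence $u(\cdot+t_n)$ --- which is what membership in $\widetilde{\mathfrak M}_{u}$ demands --- would not follow. The repair, consistent with the paper's cocycle framework over $\mathcal H(F)$ with $F=(\gamma,f,g)$, is to read the compatibility statement with respect to $\mathfrak M_{F}$, i.e.\ to require that $\gamma^{t_n}$ converge together with $(f^{t_n},g^{t_n})$; with that convention your argument closes. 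A final observation: your route does not use (H$_g^2$) or the tightness supplied by the $H^1$-bound \eqref{BH01}; those ingredients are what the paper needs for the Lagrange-stability/comparability step behind Remark \ref{corL3} (and for the attractor theory), so their appearance in the hypotheses of Theorem \ref{compth} should not mislead you into thinking a compactness argument is required here.
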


\begin{theorem}\label{averth}
Let $\gamma(\cdot)\geq\frac{|\beta|}{\sqrt3}$.
Suppose that $f$ and $g$ satisfy  {\rm(H$_f^1$)}--{\rm(H$_f^3$)},
{\rm(H$_g^1$)}--{\rm(H$_g^3$)}, {\rm(H$_c$)},
{\rm(G$_\gamma$)}, {\rm(G$_f$)} and {\rm{(G$_g^1$)--(G$_g^2$)}}. If
$\lambda_{*}-\lambda_{f}-\frac{9}{2}L_{g}^{2}>0$,
then for any $0<\veps \le 1$
\begin{enumerate}
\item equation \eqref{eqG2.1} has a unique solution
      $u^{\veps}\in C_{b}(\mathbb R,\mcL^{2}
      (\Omega,\mathbb P;L^{2}(\T^{d})))$;
\item the solution $u^{\veps}$ is
      strongly compatible in distribution and
      \[
      \lim_{\veps\rightarrow0}\sup\limits_{s\leq t\leq s+T}
      W_{2}(\mathscr L(u^{\veps}(t)),
      \mathscr L(\bar{u}(0)))=0
      \]
      for all $s\in\R$ and $T>0$,
      where $\bar{u}$ is the unique stationary solution of the
      averaged equation \eqref{eqG5_1}.
\end{enumerate}
\end{theorem}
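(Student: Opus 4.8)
The plan is to assemble this theorem from the three main ingredients already developed in the paper: the first Bogolyubov theorem (Theorem \ref{avethf}), the existence, uniqueness and asymptotic stability of the $\mcL^2$-bounded solution (Proposition \ref{Boundedth} and Lemma \ref{gasms}), and the higher regularity of that bounded solution ((H$_f^2$), (H$_g^2$), (H$_g^3$) give \eqref{BH01}--\eqref{BH02}). For part (i), I would apply Proposition \ref{Boundedth} to the rescaled coefficients $\gamma_\veps,f_\veps,g_\veps$: by Remark \ref{hulllem}(i) these satisfy (H$_f^1$), (H$_f^3$), (H$_g^1$) with the \emph{same} constants, and the dissipativity condition $\lambda_*-\lambda_f-\tfrac{L_g^2}{2}>0$ is unaffected by time-rescaling, so \eqref{eqG2.1} has a unique $\mcL^2(\Omega,\mathbb P;L^2(\T^d))$-bounded solution $u^\veps$, which lies in $C_b(\R,\mcL^2(\Omega,\mathbb P;L^2(\T^d)))$ by the flow property and the uniform bound. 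Part (ii) of the conclusion ``strongly compatible in distribution'' follows verbatim from Theorem \ref{compth} applied to $(\gamma_\veps,f_\veps,g_\veps)$, since $\lambda_*-\lambda_f-\tfrac92 L_g^2>0$ implies in particular $\lambda_*-\lambda_f-\tfrac{L_g^2}{2}>0$, and (H$_c$) is inherited by the rescaled coefficients.

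The heart of the matter is the convergence $\lim_{\veps\to0}\sup_{s\le t\le s+T}W_2(\mathscr L(u^\veps(t)),\mathscr L(\bar u(0)))=0$. I would split via the triangle inequality for $W_2$:
\begin{align*}
W_2(\mathscr L(u^\veps(t)),\mathscr L(\bar u(0)))
&\le W_2\big(\mathscr L(u^\veps(t)),\mathscr L(u^\veps(t,s,\bar u(0)))\big)
+W_2\big(\mathscr L(u^\veps(t,s,\bar u(0))),\mathscr L(\bar u(t,s,\bar u(0)))\big)\\
&\quad+W_2\big(\mathscr L(\bar u(t,s,\bar u(0))),\mathscr L(\bar u(0))\big),
\end{align*}
where $\bar u(0)$ denotes (a random variable with the law of) the stationary solution of \eqref{eqG5_1} at time $0$. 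The third term is zero, since by Remark \ref{hulllem}(iii) the averaged equation has a unique $\mcL^2$-bounded solution which, because $\bar\gamma,\bar f,\bar g$ are autonomous, is stationary, so $\mathscr L(\bar u(t,s,\bar u(0)))=\mathscr L(\bar u(0))$ for all $t\ge s$. For the first term I would use the exponential asymptotic stability estimate \eqref{gasmseq} (valid for $u^\veps$ since its coefficients satisfy the same hypotheses with the same constants), giving $W_2(\mathscr L(u^\veps(t)),\mathscr L(u^\veps(t,s,\bar u(0))))^2\le e^{-(2\lambda_*-2\lambda_f-L_g^2)(t-s)}\,\mbE\|u^\veps(s)-\bar u(0)\|^2$, and the prefactor $\mbE\|u^\veps(s)-\bar u(0)\|^2$ is bounded uniformly in $\veps$ and $s$ by \eqref{B2p} together with the uniform $\mcL^2$-bound for $\bar u$. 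So, given $\eta>0$, I first choose $T_0$ so large that on $t\ge s+T_0$ this term is $<\eta$; on the compact range $s\le t\le s+T_0$ the first term also tends to $0$ with $\veps$ by the continuous-dependence Lemma \ref{conpth} (applied with $\gamma_n=\gamma_{\veps_n}$ etc.), once one verifies the pointwise convergences $\gamma_\veps(t)\to\bar\gamma$, $f_\veps(t,x)\to\bar f(x)$, $g_\veps(t,x)\to\bar g(x)$ --- this is exactly the content of (G$_\gamma$), (G$_f$), (G$_g^1$) in the averaged (Cesàro) sense, which gives convergence of time-averages and hence, combined with the almost-periodic-type structure, the required pointwise limits as $\veps\to0$ after a change of variable $t\mapsto t/\veps$.

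The genuinely nontrivial term is the middle one, which is precisely where the first Bogolyubov theorem enters: since $W_2(\mathscr L(X),\mathscr L(Y))^2\le\mbE\|X-Y\|^2$ and here $u^\veps$ and $\bar u$ start from the \emph{same} initial datum $\bar u(0)$, Theorem \ref{avethf} gives
\[
W_2\big(\mathscr L(u^\veps(t,s,\bar u(0))),\mathscr L(\bar u(t,s,\bar u(0)))\big)^2
\le\mbE\Big(\sup_{s\le t\le s+T_0}\|u^\veps(t,s,\bar u(0))-\bar u(t,s,\bar u(0))\|^2\Big)\xrightarrow{\veps\to0}0.
\]
The hypothesis \eqref{conkappa} of Theorem \ref{avethf}, namely the uniform $\mcL^4(\Omega,\mathbb P;H_0^1)$-bound on the initial data, is supplied here by \eqref{BH01}: the stationary solution $\bar u(0)$ satisfies $\mbE\|\bar u(0)\|_1^{2p}<\infty$ for $1\le p<\tfrac{\lambda_*-\lambda_f}{L_g^2}+\tfrac12$, and the condition $\lambda_*-\lambda_f-\tfrac92 L_g^2>0$ forces this upper bound to exceed $5>2$, so $p=2$ is admissible and $\mbE\|\bar u(0)\|_1^4<\infty$. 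The main obstacle I anticipate is not any single estimate but the bookkeeping of the order of quantifiers: the first and second terms require a \emph{first} choice of $T_0$ (uniform in $s$) and only \emph{then} a choice of $\veps$ small, and one must check that all the uniform-in-$s$ and uniform-in-$\veps$ bounds (from \eqref{B2p}, \eqref{BH01}, Remark \ref{reuniest}, Remark \ref{hulllem}) are genuinely available so that the three pieces can be made simultaneously small; once that is organized the result follows, and since this is parallel to the argument in \cite{CLAM2021} I would, as the authors indicate, present it compactly rather than in full detail.
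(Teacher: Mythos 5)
Your overall architecture — Proposition \ref{Boundedth} for part (i), Theorem \ref{compth} for strong compatibility, and a triangle-inequality splitting that combines the exponential stability \eqref{gasmseq} with the first Bogolyubov theorem \ref{avethf}, the latter fed by the $\mcL^4(\Omega,\P;H_0^1)$-bound \eqref{BH01} with $p=2$, which $\lambda_*-\lambda_f-\tfrac92L_g^2>0$ indeed makes admissible — is the right one and matches what the authors intend by their reference to \cite{CLAM2021}. However, your treatment of the first term of the decomposition on the initial segment $s\le t\le s+T_0$ has a genuine gap, in fact two. First, on that segment $W_2\big(\mathscr L(u^\veps(t)),\mathscr L(u^\veps(t,s,\bar u(0)))\big)$ is controlled by \eqref{gasmseq} only through $\mbE\|u^\veps(s)-\bar u(0)\|^2$, and at $t=s$ it \emph{equals} $W_2(\mathscr L(u^\veps(s)),\mathscr L(\bar u(0)))$ — precisely the quantity the theorem asserts is small; invoking Lemma \ref{conpth} to shrink it would require the initial laws to converge, i.e.\ would presuppose the conclusion, so the argument is circular there. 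Second, the claimed pointwise convergences $\gamma_\veps(t)\to\bar\gamma$, $f_\veps(t,x)\to\bar f(x)$, $g_\veps(t,x)\to\bar g(x)$ are false in general: (G$_\gamma$), (G$_f$), (G$_g^1$) control only Ces\`aro averages, and e.g.\ $\gamma(t)=1+\tfrac12\sin t$ satisfies (G$_\gamma$) with $\bar\gamma=1$ while $\gamma(t/\veps)$ has no pointwise limit as $\veps\to0$. So Lemma \ref{conpth} cannot be applied in the way you use it.

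The missing idea is to anchor the decomposition in the distant past rather than at $s$. Fix $\eta>0$ and choose $T_0$ so that $e^{-(2\lambda_*-2\lambda_f-L_g^2)T_0}\sup_{\veps,\sigma}\big(\mbE\|u^\veps(\sigma)\|^2+\mbE\|\bar u(0)\|^2\big)<\eta^2$, which is possible uniformly in $\veps$ by \eqref{ineq2} and \eqref{B2p}. Let $\xi$ be a copy of $\bar u(0)$. By the flow property $u^\veps(t)=u^\veps(t,s-T_0,u^\veps(s-T_0))$, so for every $t\in[s,s+T]$,
\begin{align*}
W_2\big(\mathscr L(u^\veps(t)),\mathscr L(\bar u(0))\big)
&\le W_2\big(\mathscr L(u^\veps(t,s-T_0,u^\veps(s-T_0))),\mathscr L(u^\veps(t,s-T_0,\xi))\big)\\
&\quad+W_2\big(\mathscr L(u^\veps(t,s-T_0,\xi)),\mathscr L(\bar u(t,s-T_0,\xi))\big),
\end{align*}
the remaining term vanishing by stationarity of $\bar u$. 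Now the first term is at most $e^{-(\lambda_*-\lambda_f-L_g^2/2)(t-s+T_0)}(\cdots)^{1/2}<\eta$ for \emph{all} $t\ge s$, because the exponential has had at least time $T_0$ to act before the window $[s,s+T]$ is reached; and the second term tends to $0$ as $\veps\to0$, uniformly on $[s-T_0,s+T]$, by Theorem \ref{avethf} applied on an interval of length $T+T_0$ with the common initial datum $\xi$, which satisfies \eqref{conkappa} thanks to \eqref{BH01}. With this modification no continuous-dependence-in-the-coefficients step, and no pointwise convergence of $\gamma_\veps,f_\veps,g_\veps$, is needed; the rest of your proposal then goes through.
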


\begin{remark}\label{corL3}\rm
Under the conditions of Theorem \ref{averth}, the following statements hold.
\begin{enumerate}
\item
If $\gamma\in C(\R,\R_+)$, $f\in$ $ C(\mathbb R\times L^{2}(\T^{d}),L^{2}(\T^{d}))$ and
$g\in C(\R\times L^{2}(\T^{d}),L_{2}(U,L^{2}(\T^{d})))$ are jointly
stationary (respectively, $T$-periodic, quasi-periodic with the
spectrum of frequencies $\nu_1,\ldots,\nu_k$, almost
periodic, almost automorphic, Birkhoff recurrent, Lagrange
stable, Levitan almost periodic, almost recurrent, Poisson
stable) in $t$ uniformly with respect to $x$ on each bounded
subset, then so is the unique solution
$u^{\veps} \in C_{b}(\mathbb R,\mcL^2(\Omega,\mathbb P;L^{2}(\T^{d})))$
of \eqref{eqG2.1} in distribution;
\item
If $\gamma\in C(\R,\R_+)$, $f\in$ $ C(\mathbb R\times L^{2}(\T^{d}),L^{2}(\T^{d}))$ and
$g\in C(\R\times L^{2}(\T^{d}),L_{2}(U,L^{2}(\T^{d})))$
are Lagrange stable and jointly pseudo-periodic
(respectively, pseudo-recurrent) in $t$ uniformly with respect to
$x$ on each bounded subset, then the unique
$\mcL^2(\Omega,\mathbb P;L^{2}(\T^{d}))$-bounded solution
$u^{\veps}$ of \eqref{eqG2.1} is pseudo-periodic
(respectively, pseudo-recurrent) in distribution.
\end{enumerate}
\end{remark}

\section{Global averaging principle in weak sense}\label{globalAP}

Let $u(t,s,x),t\geq s$ be the solution of equation \eqref{maineq1}
with initial value $u(s,s,x)=x$.
Recall that
\[
P^{*}(t,F,\mu)(A)=\int_{L^{2}(\T^{d})}
P(0,x,t,A)\mu(\d x)
=\int_{L^2(\T^d)}\mathbb P\circ\left(
u(t,0,x)\right)^{-1}(A)\mu(\d x)
\]
for all $\mu\in Pr(L^{2}(\T^{d}))$ and $A\in\mathcal{B}(L^{2}(\T^{d}))$,
where $F=(\gamma,f,g)$.

In this section, we will firstly show that $P^*$ is a cocycle, and admits a uniform attractor.
Then we prove that the uniform attractor of \eqref{eqG2.1}
converges to that of \eqref{eqG5_1} in probability measure space
when $\veps$ goes to zero. See Appendix \ref{shiftDS} for the definitions of
nonautonomous dynamical system, cocycle, uniform attractor  and
skew-product semiflow.

Define
$$B_{r}:=\left\{\mu\in Pr_{2}(L^{2}(\T^{d})):
\int_{L^{2}(\T^{d})}\|z\|^{2}\mu(\d z)\leq r^{2}\right\}
$$
for any $r>0$.
A subset $D\subset Pr_{2}(L^{2}(\T^{d}))$ is called {\em bounded}
if there exists a constant $r>0$ such that $D\subset B_{r}$.

By Lemma \ref{conpth}, the uniqueness in law of the solutions
for equation \eqref{maineq1} and the definition of $P^{*}$,
we have the following lemma, which states that $P^{*}$ is a cocycle.
\begin{lemma}\label{cocylem}
Consider equation \eqref{maineq1}. Let $\gamma(\cdot)\geq\frac{|\beta|}{\sqrt3}$.
Assume that {\rm(H$_f^1$)},  {\rm(H$_g^1$)} and  {\rm(H$_c$)} hold. Then
$P^{*}$ is a cocycle on $\left(\mathcal H(F),\R,\sigma\right)$ with
fiber $Pr_{2}(L^{2}(\T^{d}))$.
Moreover, the mapping given by
\[
\Pi :\R^{+}\times \mathcal H(F)\times Pr_{2}(L^{2}(\T^{d}))\rightarrow
\mathcal H(F)\times Pr_{2}(L^{2}(\T^{d})),
\]
\[
\Pi(t,(F,\mu)):=\left(\sigma_{t}F,P^{*}(t,F,\mu)\right)
\]
is a continuous skew-product semiflow.
\end{lemma}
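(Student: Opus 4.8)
The plan is to verify the three defining properties of a cocycle over the shift flow $(\mathcal H(F),\R,\sigma)$ — that $P^{*}$ is a well-defined map into the fibre $Pr_{2}(L^{2}(\T^{d}))$, that it satisfies the cocycle identity, and that it is jointly continuous — and then read off the skew-product semiflow structure of $\Pi$. For well-definedness, I would first observe that for every $G=(\widetilde\gamma,\widetilde f,\widetilde g)\in\mathcal H(F)$, Remark~\ref{hulllem}(i) guarantees that $\widetilde f,\widetilde g$ satisfy (H$_f^1$) and (H$_g^1$) with the \emph{same} constants as $f,g$, while the bound $\inf_{t\in\R}\widetilde\gamma(t)\geq|\beta|/\sqrt3$ is preserved under time translations and uniform limits. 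Hence Theorem~\ref{wellD} yields a unique solution $u_{G}(\cdot,0,x)$ of the equation driven by $G$ for every $x\in L^{2}(\T^{d})$, and Lemma~\ref{uniest} with $p=1$ gives $\mathbb E\|u_{G}(t,0,x)\|^{2}\leq C_{t}(1+\|x\|^{2})$ with $C_{t}$ independent of $G\in\mathcal H(F)$. Integrating against $\mu\in Pr_{2}(L^{2}(\T^{d}))$ then shows $\int_{L^{2}(\T^{d})}\|z\|^{2}\,P^{*}(t,G,\mu)(\mathrm dz)<\infty$, so $P^{*}$ indeed takes values in the fibre.

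For the cocycle identity, $P^{*}(0,G,\mu)=\mu$ is immediate from the definition. The flow property I would obtain by combining the Markov property of the solutions (a consequence of uniqueness in law), the time-shift relation $P_{\sigma_{s}G}(0,y,t,\cdot)=P_{G}(s,y,s+t,\cdot)$ — which reflects that the equation driven by $\sigma_{s}G=G(\cdot+s)$ started at time $0$ coincides in law, after translating time by $s$, with the equation driven by $G$ started at time $s$ — and the Chapman--Kolmogorov equation $\int_{L^{2}(\T^{d})}P_{G}(s,y,s+t,\cdot)\,P_{G}(0,x,s,\mathrm dy)=P_{G}(0,x,s+t,\cdot)$. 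Chaining these for $t,s\geq0$ gives $P^{*}(t+s,G,\mu)=P^{*}\bigl(t,\sigma_{s}G,P^{*}(s,G,\mu)\bigr)$, i.e.\ the cocycle identity over $(\mathcal H(F),\R,\sigma)$.

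Continuity is the main obstacle. The base map $(t,G)\mapsto\sigma_{t}G$ is continuous by definition of the shift flow, so it remains to prove that $(t,G,\mu)\mapsto P^{*}(t,G,\mu)$ is continuous. Given $t_{n}\to t\geq0$, $G_{n}\to G$ in $\mathcal H(F)$ (uniform convergence on compacts, hence pointwise), and $W_{2}(\mu_{n},\mu)\to0$, I would choose random variables $\zeta_{n},\zeta$ with laws $\mu_{n},\mu$ and $\mathbb E\|\zeta_{n}-\zeta\|^{2}\to0$ through an optimal $W_{2}$-coupling, and apply Lemma~\ref{conpth} on an interval $[0,T]$ with $T>\sup_{n}t_{n}$ to get $\sup_{0\le\tau\le T}W_{2}(\mathscr L(u_{G_{n}}(\tau,0,\zeta_{n})),\mathscr L(u_{G}(\tau,0,\zeta)))\to0$. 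By the triangle inequality, $W_{2}(P^{*}(t_{n},G_{n},\mu_{n}),P^{*}(t,G,\mu))$ is dominated by this quantity plus $W_{2}(\mathscr L(u_{G}(t_{n},0,\zeta)),\mathscr L(u_{G}(t,0,\zeta)))$; the latter tends to $0$ since $u_{G}(\cdot,0,\zeta)$ has a.s.\ continuous paths in $L^{2}(\T^{d})$ with $\sup_{0\le\tau\le T}\|u_{G}(\tau,0,\zeta)\|^{2}$ integrable (Lemma~\ref{uniest}), so dominated convergence gives $\mathbb E\|u_{G}(t_{n},0,\zeta)-u_{G}(t,0,\zeta)\|^{2}\to0$. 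This proves joint continuity, and then $\Pi(0,\cdot)=\mathrm{id}$ and $\Pi(t+s,\cdot)=\Pi(t,\Pi(s,\cdot))$ follow from the flow property of $\sigma$ together with the cocycle identity, so $\Pi$ is a continuous skew-product semiflow. The delicate points I expect are making the moment estimates uniform over the hull $\mathcal H(F)$ and handling the varying time argument $t_{n}$, which is not covered directly by Lemma~\ref{conpth} and is absorbed via path continuity and uniform integrability.
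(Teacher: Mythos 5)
Your proposal is correct and follows the same route the paper intends: the paper gives no detailed argument for Lemma \ref{cocylem}, stating only that it follows from Lemma \ref{conpth}, uniqueness in law, and the definition of $P^{*}$, and your elaboration (uniform moment bounds over the hull for well-definedness, Markov/Chapman--Kolmogorov plus the time-shift relation for the cocycle identity, and Lemma \ref{conpth} combined with path continuity to absorb the varying time argument) is exactly the standard way to fill in that sketch. No gaps.
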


\begin{remark}\label{uniestep}\rm
(i) It follows from Remark \ref{hulllem} that \eqref{ineq2}--\eqref{ineq4},
\eqref{BH01}, \eqref{BH02}, \eqref{B2p} and \eqref{gasmseq}
hold uniformly for all $\widetilde{F}\in \mathcal H(F)$.

(ii) Assume that $F:=(\gamma,f,g)$ satisfy (H$_f^1$), (H$_g^1$), (H$_g^2$),
(G$_\gamma$), (G$_f$), (G$_g^1$) and (G$_g^2$).
Then it is immediate to see that for any $\widetilde{F}\in\mathcal H(F)$, $\widetilde{\gamma}$,
$\widetilde{f}$ and $\widetilde{g}$ satisfy (G$_\gamma$),
(G$_f$), (G$_g^1$) and (G$_g^2$).
\end{remark}

For any given $\widetilde{F}\in \mathcal H(F)$,
under conditions of Proposition \ref{Boundedth}, equation
\eqref{maineq1} has a unique $\mcL^{2}(\Omega,\P;L^{2}(\T^{d}))$-bounded
solution $u_{\widetilde{F}}$ with the distribution
\[
\mu_{\widetilde{F}}(\cdot):=\mathscr L(u_{\widetilde{F}}(\cdot))
:\R\rightarrow Pr_2(L^2(\T^d)).
\]

\begin{prop}\label{invlemm}
Let $\gamma(\cdot)\geq\frac{|\beta|}{\sqrt3}$.
Consider equation \eqref{maineq1}. Suppose that
{\rm(H$_f^1$)}, {\rm(H$_f^3$)}, {\rm(H$_g^1$)},  {\rm(H$_g^2$)}
and {\rm(H$c$)} hold and $\lambda_{*}-\lambda_{f}-\frac{L_{g}^{2}}{2}>0$.
Then we have the following statements.
\begin{enumerate}
  \item Set
  $\mathfrak{A}_{\widetilde{F}}
  :=\overline{\left\{\mu_{\widetilde{F}}(t)
  \in Pr_{2}(L^{2}(\T^{d})):t\in \R\right\}}$. Then
  $$
  P^{*}(t,\widetilde{F},\mathfrak{A}_{\widetilde{F}})
  =\mathfrak{A}_{\sigma_{t}\widetilde{F}}
  $$
  for all $t\in \R^{+}$ and $\widetilde{F}\in \mathcal H(F)$.
  \item If $\mathcal H(F)$ is compact, then
  the skew product semiflow $\Pi$ admits a global attractor
  $\mathfrak{A}:=\omega\left(\mathcal H(F)\times\overline{
  \cup_{\widetilde{F}\in \mathcal H(F)}
  \mathfrak{A}_{\widetilde{F}}}\right)$.
  Moreover, $\Pi_{2}\mathfrak{A}$ is the uniform
  attractor of the cocycle $P^{*}$. Here
  $\Pi_{2}(\widetilde{F},\mu):=\mu$ for all
  $(\widetilde{F},\mu)\in \mathcal H(F)\times Pr_{2}(L^{2}(\T^{d}))$.
\end{enumerate}
\end{prop}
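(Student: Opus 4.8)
The plan is to prove (i) by showing that $\mathfrak{A}_{\widetilde F}$ is a nonempty compact subset of $(Pr_{2}(L^{2}(\T^{d})),W_{2})$ which is invariant in the fibered sense, and then to deduce (ii) from the abstract theory of global attractors for skew-product semiflows once a compact absorbing set has been produced. For compactness of $\mathfrak{A}_{\widetilde F}$, fix $p>1$ with $1\le p<\tfrac{\lambda_*-\lambda_f}{L_g^{2}}+\tfrac12$ (possible since $\lambda_*-\lambda_f-\tfrac{L_g^{2}}{2}>0$). By Proposition \ref{Boundedth}(i) and Remark \ref{uniestep}(i), $\sup_{\widetilde F\in\mathcal H(F)}\sup_{t\in\R}\bigl(\mbE\|u_{\widetilde F}(t)\|_{1}^{2p}+\mbE\|u_{\widetilde F}(t)\|^{2p}\bigr)<\infty$. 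The $H^{1}$-bound together with the compact embedding $H_{0}^{1}\subset L^{2}(\T^{d})$ (via Chebyshev and Prokhorov) yields tightness of $\{\mu_{\widetilde F}(t):t\in\R\}$ in $Pr(L^{2}(\T^{d}))$, while the $\mcL^{2p}$-bound on $\|u_{\widetilde F}(t)\|$ gives uniform integrability of $z\mapsto\|z\|^{2}$ under this family; hence the family is $W_{2}$-precompact and $\mathfrak{A}_{\widetilde F}$, its closure, is compact (no compactness of $\mathcal H(F)$ is needed for this part).

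For the invariance I would start from the flow property of Proposition \ref{Boundedth}, $\mu_{\widetilde F}(t)=P^{*}(t-s,\sigma_{s}\widetilde F,\mu_{\widetilde F}(s))$ for $t\ge s$ (so that $P^{*}(t,\widetilde F,\mu_{\widetilde F}(0))=\mu_{\widetilde F}(t)$), together with the time-translation identity $\mu_{\sigma_{\tau}\widetilde F}(\cdot)=\mu_{\widetilde F}(\cdot+\tau)$, which follows from uniqueness of the $\mcL^{2}(\Omega,\P;L^{2}(\T^{d}))$-bounded solution. Next I would show that the section map $\widetilde G\mapsto\mu_{\widetilde G}(0)$ is $W_{2}$-continuous on $\mathcal H(F)$: using the uniform-in-hull exponential stability \eqref{gasmseq} one replaces $\mu_{\widetilde G}(0)$ by $P^{*}(L,\sigma_{-L}\widetilde G,\nu)$ for a fixed $\nu$ up to an error $O({\rm e}^{-cL})$, and then invokes continuous dependence on coefficients from Lemma \ref{conpth}. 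Density of $\{\sigma_{\tau}\widetilde F:\tau\in\R\}$ in $\mathcal H(\widetilde F)$ then gives $\mathfrak{A}_{\widetilde F}=\overline{\{\mu_{\widetilde G}(0):\widetilde G\in\mathcal H(\widetilde F)\}}$. With this description, the $W_{2}$-continuity of $\mu\mapsto P^{*}(t,\widetilde F,\mu)$ (Lemma \ref{conpth}, fixed coefficients), the flow property, and the equality $\mathcal H(\sigma_{t}\widetilde F)=\mathcal H(\widetilde F)$, both inclusions in $P^{*}(t,\widetilde F,\mathfrak{A}_{\widetilde F})=\mathfrak{A}_{\sigma_{t}\widetilde F}$ are obtained by approximation along the dense set of translates, exactly as in the corresponding step of \cite{CLAM2021}.

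For (ii), assume $\mathcal H(F)$ compact; by Lemma \ref{cocylem}, $\Pi(t,(\widetilde F,\mu))=(\sigma_{t}\widetilde F,P^{*}(t,\widetilde F,\mu))$ is a continuous skew-product semiflow on $\mathcal H(F)\times Pr_{2}(L^{2}(\T^{d}))$. The uniform dissipativity \eqref{ineq2} shows $\mathcal H(F)\times B_{R_{0}}$ is absorbing for $R_{0}$ large, and for any bounded set of initial data the regularizing $H^{1}$-estimate \eqref{ineq3} bounds $\mbE\|u_{\widetilde F}(t,s,\zeta_{s})\|_{1}^{2p}$ uniformly for $t\ge s+1$ and $\widetilde F\in\mathcal H(F)$, so the corresponding laws lie in a $W_{2}$-precompact set $\mathcal K$ (by the argument of the first paragraph). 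Thus $\mathcal H(F)\times\overline{\mathcal K}$ is a compact absorbing set for $\Pi$, and the abstract existence theorem for attractors of skew-product semiflows (see Appendix \ref{shiftDS} and the references cited there) yields the global attractor $\mathfrak A$ of $\Pi$; expressing $\mathfrak A$ as the $\omega$-limit set of a bounded absorbing set and using that, by (i), $\overline{\cup_{\widetilde F\in\mathcal H(F)}\mathfrak{A}_{\widetilde F}}$ is such a set gives $\mathfrak A=\omega\bigl(\mathcal H(F)\times\overline{\cup_{\widetilde F\in\mathcal H(F)}\mathfrak{A}_{\widetilde F}}\bigr)$. Finally, since $\mathcal H(F)$ is compact and $\Pi_{2}$ continuous, $\Pi_{2}\mathfrak A$ is compact, $P^{*}$-invariant and uniformly attracting, hence the uniform attractor of the cocycle $P^{*}$ by the standard correspondence between global attractors of skew-product semiflows and uniform attractors of cocycles.

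The main obstacle is the asymptotic compactness. Because balls of $L^{2}(\T^{d})$ are not compact, $(Pr_{2}(L^{2}(\T^{d})),W_{2})$ has no precompact bounded sets, so \eqref{ineq2} alone produces only a bounded, not a compact, absorbing set. The essential input is therefore the parabolic $H^{1}$-regularization \eqref{ineq3}--\eqref{BH01} specific to the dissipative CGL equation: it upgrades the $\mcL^{2}(\Omega,\P;L^{2}(\T^{d}))$-bounds to $\mcL^{2p}(\Omega,\P;H_{0}^{1})$-bounds, which through the compact embedding $H_{0}^{1}\subset L^{2}(\T^{d})$ and the uniform integrability of second moments yield $W_{2}$-precompactness. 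A secondary technical point is the $W_{2}$-continuity of $\widetilde G\mapsto\mu_{\widetilde G}(0)$ on $\mathcal H(F)$, which cannot be read off from pointwise convergence of coefficients and must instead be extracted from the uniform-in-hull exponential stability \eqref{gasmseq} together with Lemma \ref{conpth}.
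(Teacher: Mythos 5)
The paper's own ``proof'' of this proposition is a one-line citation of \cite{CLAM2021}, so there is no detailed argument to measure yours against; your outline has the expected shape, and two of its three components are fine. The $W_2$-precompactness of $\{\mu_{\widetilde F}(t):t\in\R\}$ via the uniform $\mcL^{2p}(\Omega,\P;H_0^1)$-bound (tightness from the compact embedding $H_0^1\subset L^2(\T^d)$ plus uniform integrability of second moments from the $2p$-th moment bound with $p>1$) is exactly the mechanism the paper builds in Section \ref{secondBT} for this purpose, and the existence of the global attractor of $\Pi$ in (ii) is standard once asymptotic compactness is in hand. Two secondary remarks on (ii): \eqref{ineq3} as stated requires $\zeta_s\in\mcL^{2p}(\Omega,\P;H_0^1)$, so applying it to data that are merely $\mcL^2(\Omega,\P;L^2(\T^d))$-bounded needs an extra smoothing step which you elide; and the uniform contraction \eqref{gasmseq} already shows that the compact set $\overline{\cup_{\widetilde F}\mathfrak A_{\widetilde F}}$ is uniformly attracting, which is the shorter route to a compact attracting set.

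The genuine gap is the invariance identity in (i). The flow property pairs the transition operator of $\widetilde F$ with the time-zero section of \emph{its own} bounded solution: $P^*(t,\widetilde F,\mu_{\widetilde F}(0))=\mu_{\widetilde F}(t)=\mu_{\sigma_t\widetilde F}(0)$. For any other point $\mu_{\widetilde F}(s)=\mu_{\sigma_s\widetilde F}(0)$ of the orbit, $P^*(t,\widetilde F,\cdot)$ is the wrong operator: $P^*(t,\widetilde F,\mu_{\widetilde F}(s))$ is the law at time $t$ of the $\widetilde F$-solution started \emph{at time $0$} from $\mathscr L(u_{\widetilde F}(s))$, which is neither $\mu_{\widetilde F}(t+s)$ nor, in general, any point of $\overline{\{\mu_{\widetilde F}(r):r\in\R\}}$; the contraction \eqref{gasmseq} only places it within $O({\rm e}^{-ct})$ of $\mu_{\widetilde F}(t)$. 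Already for the scalar analogue $\dot x=-x+\sin t$, the time-$t$ solution operator based at time $0$ maps the closed range of the bounded solution onto a strictly shorter interval, so it cannot equal the corresponding set for $\sigma_t f$. Hence ``both inclusions are obtained by approximation along the dense set of translates'' does not close this step: density of translates and continuity of $\widetilde G\mapsto\mu_{\widetilde G}(0)$ give you a description of $\mathfrak A_{\widetilde F}$ as a set, but not that $P^*(t,\widetilde F,\cdot)$ maps it onto $\mathfrak A_{\sigma_t\widetilde F}$. What the flow property does give, with no approximation, is the invariance of the family of singletons $\widetilde F\mapsto\{\mu_{\widetilde F}(0)\}$; you need either to reduce the statement to that family (and then assemble $\mathfrak A_{\widetilde F}$ from the time-zero sections over the hull) or to supply the missing argument showing how the full orbit closure is carried onto $\mathfrak A_{\sigma_t\widetilde F}$.
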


\begin{proof}
Similar to the proof of \cite[Proposition 5.12]{CLAM2021},
we have these conclusions.
\end{proof}

With the help of Theorem \ref{avethf}, Lemmas \ref{essol} and \ref{gasms}, and Proposition \ref{Boundedth},
we now establish the following global averaging principle. Since the argument is
similar to \cite[Theorem 5.14]{CLAM2021}, we omit the proof.
\begin{theorem}\label{gath}
Let $\gamma(\cdot)\geq\frac{|\beta|}{\sqrt3}$.
Consider equations \eqref{eqG2.1} and \eqref{eqG5_1}
under conditions {\rm(H$_f^1$)}--{\rm(H$_f^3$)}, {\rm(H$_g^1$)}--{\rm(H$_g^3$)},
{\rm(H$_c$)}, {\rm(G$_\gamma$)}, {\rm(G$_f$)} and {\rm(G$_g^1$)--(G$_g^2$)}. Assume that
$\lambda_{*}-\lambda_{f}-\frac{9}{2}L_{g}^{2}>0$.
If $\mathcal H(F)$ is compact, then
\begin{enumerate}
  \item the cocycle $P_{\veps}^{*}$ associated with
  stochastic CGL equation \eqref{eqG2.1} has a uniform attractor
  $\mathfrak{A}^{\veps}$ for any $0<\veps\leq1$;
  \item the cocycle $\bar{P}^{*}$ associated with averaged equation
        \eqref{eqG5_1} has a uniform attractor
        $\bar{\mathfrak{A}}$, which is a singleton set;
  \item
        \begin{equation*}
        \lim_{\veps\rightarrow0}{\rm dist}_{Pr_{2}(L^{2}(\T^{d}))}
        \left(\mathfrak{A}^{\veps},\bar{\mathfrak{A}}\right)=0.
        \end{equation*}
\end{enumerate}
\end{theorem}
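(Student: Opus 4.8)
The plan is to follow the skew-product/uniform-attractor scheme of \cite[Theorem~5.14]{CLAM2021}, assembling three ingredients that are already available here: the description of the uniform attractor as a union of distributions of $\mcL^{2}$-bounded solutions over the symbol hull (Proposition~\ref{invlemm}), the uniform dissipativity estimates of Section~\ref{boundedsol} (Lemma~\ref{essol}, Proposition~\ref{Boundedth}, Lemma~\ref{gasms}, Remarks~\ref{hulllem} and \ref{uniestep}), and the first Bogolyubov theorem (Theorem~\ref{avethf}) upgraded to hold uniformly over $\mathcal H(F)$. Throughout, $u_{\widetilde F}$ denotes the unique $\mcL^{2}(\Omega,\P;L^{2}(\T^{d}))$-bounded solution of the equation with symbol $\widetilde F$.

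For (i) and (ii): since $\mathcal H(F)$ is compact and $G\mapsto G(\cdot/\veps)$ is continuous in the compact--open topology, $\mathcal H(F_{\veps})=\{G(\cdot/\veps):G\in\mathcal H(F)\}$ is compact for each $0<\veps\le 1$; moreover $F_{\veps}$ and every element of its hull inherit {\rm(H$_f^1$)}--{\rm(H$_f^3$)}, {\rm(H$_g^1$)}--{\rm(H$_g^3$)}, {\rm(H$_c$)} with the same constants (Remark~\ref{hulllem}(i)), and $\lambda_{*}-\lambda_{f}-\tfrac{L_{g}^{2}}{2}>0$ follows from $\lambda_{*}-\lambda_{f}-\tfrac{9}{2}L_{g}^{2}>0$. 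Hence Lemma~\ref{cocylem} and Proposition~\ref{invlemm}(ii) apply to \eqref{eqG2.1}, giving a uniform attractor $\mathfrak A^{\veps}=\Pi_{2}\mathfrak A_{\veps}$ with $\mathfrak A^{\veps}\subseteq\overline{\bigcup_{\widetilde F\in\mathcal H(F_{\veps})}\mathfrak A_{\widetilde F}}$ and $\mathfrak A_{\widetilde F}=\overline{\{\mathscr L(u_{\widetilde F}(t)):t\in\R\}}$. For the averaged equation, $\bar F=(\bar\gamma,\bar f,\bar g)$ is time-independent (with $\bar\gamma\ge|\beta|/\sqrt3$ by {\rm(G$_\gamma$)}, since $\gamma\ge|\beta|/\sqrt3$), so $\mathcal H(\bar F)$ is a singleton; by Proposition~\ref{Boundedth} the averaged equation has a unique $\mcL^{2}$-bounded solution, which by uniqueness and autonomy is stationary, whence $\bar{\mathfrak A}=\mathfrak A_{\bar F}=\{\mathscr L(\bar u(0))\}$, a singleton.

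For (iii), by the previous description it suffices to prove that
\[
\lim_{\veps\to0}\ \sup_{\widetilde F\in\mathcal H(F_{\veps})}\ \sup_{t\in\R}\ W_{2}\big(\mathscr L(u_{\widetilde F}(t)),\mathscr L(\bar u(0))\big)=0 ,
\]
since then $\mathrm{dist}_{Pr_{2}(L^{2}(\T^{d}))}(\mathfrak A^{\veps},\bar{\mathfrak A})=\sup_{\mu\in\mathfrak A^{\veps}}W_{2}(\mu,\mathscr L(\bar u(0)))\to0$. Fix $\rho>0$, $\widetilde F=G(\cdot/\veps)$ with $G\in\mathcal H(F)$, and $t\in\R$; put $s:=t-T$ and insert the solution $v$ of the $\widetilde F$-equation started from the stationary datum $\bar u(s)$ at time $s$, so that $W_{2}(\mathscr L(u_{\widetilde F}(t)),\mathscr L(\bar u(t)))\le(\mathbb E\|u_{\widetilde F}(t)-v(t)\|^{2})^{1/2}+(\mathbb E\|v(t)-\bar u(t)\|^{2})^{1/2}$. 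The first term is $\le\mathrm e^{-(\lambda_{*}-\lambda_{f}-\frac{L_{g}^{2}}{2})T}(\mathbb E\|u_{\widetilde F}(s)-\bar u(s)\|^{2})^{1/2}$ by \eqref{gasmseq}, and $\mathbb E\|u_{\widetilde F}(s)-\bar u(s)\|^{2}$ is bounded uniformly in $\veps,G,s$ by \eqref{ineq2} together with Remarks~\ref{uniestep}(i) and \ref{hulllem}(iii); choosing $T=T(\rho)$ large makes this term $<\rho/2$ uniformly. With $T$ now fixed, for the second term I would use $\bar u(t)=\bar u(t,s,\bar u(s))$ (uniqueness of the stationary solution) and apply Theorem~\ref{avethf} with symbol $G(\cdot/\veps)$, averaged symbol $\bar F$ (the same equation \eqref{eqG5_1} for every $G\in\mathcal H(F)$, since all of $\mathcal H(F)$ share the averages $\bar\gamma,\bar f,\bar g$ by Remark~\ref{uniestep}(ii)), and coinciding initial data $\zeta^{\veps}_{s}=\zeta_{s}=\bar u(s)$; the requirement \eqref{conkappa} holds because $\sup_{t}\mathbb E\|\bar u(t)\|_{1}^{4}<\infty$ by \eqref{BH01} (we may take $p\ge2$ as $\tfrac{\lambda_{*}-\lambda_{f}}{L_{g}^{2}}>\tfrac92$). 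Since the constants $C_{T,\kappa}$ and the rate functions $\delta_{\gamma},\delta_{f},\delta_{g}$ entering \eqref{avethfeq15} depend on $F$ only through the common Lipschitz constants, $\lambda_{*}$, $\alpha$, $\beta$, $|\gamma|_{\infty}$ and the common $\delta$'s (all shared across $\mathcal H(F)$ by Remarks~\ref{hulllem}(i) and \ref{uniestep}(ii)), the second term tends to $0$ as $\veps\to0$ uniformly in $G$, hence is $<\rho/2$ for $\veps<\veps_{0}(\rho)$. Thus $W_{2}(\mathscr L(u_{\widetilde F}(t)),\mathscr L(\bar u(0)))=W_{2}(\mathscr L(u_{\widetilde F}(t)),\mathscr L(\bar u(t)))<\rho$ for all $\veps<\veps_{0}$, all $\widetilde F\in\mathcal H(F_{\veps})$ and all $t\in\R$, which is the displayed limit.

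I expect the main obstacle to be precisely this uniformity over the hull in the first Bogolyubov theorem: one must re-read the proof of Theorem~\ref{avethf} to confirm that all constants and convergence rates there depend on the coefficients only through data invariant under translation and limits (hence common to all of $\mathcal H(F)$), and then package it as a ``uniform'' version of Theorem~\ref{avethf}. Beyond that, the only other non-routine point is the identification $\mathfrak A^{\veps}\subseteq\overline{\bigcup_{\widetilde F\in\mathcal H(F_{\veps})}\mathfrak A_{\widetilde F}}$, which is inherited from the attractor structure of \cite{CLAM2021} and Proposition~\ref{invlemm}; the remaining steps (compactness of $\mathcal H(F_{\veps})$, stationarity of $\bar u$, the two-term comparison) are straightforward.
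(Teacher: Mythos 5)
Your proposal is correct and follows essentially the same route as the paper, which omits the proof by pointing to \cite[Theorem 5.14]{CLAM2021} and lists exactly the ingredients you assemble: the cocycle/uniform-attractor structure from Lemma \ref{cocylem} and Proposition \ref{invlemm}, the uniform dissipativity and exponential stability from Lemma \ref{essol}, Proposition \ref{Boundedth} and Lemma \ref{gasms} (extended over the hull via Remarks \ref{hulllem} and \ref{uniestep}), and the first Bogolyubov theorem applied with initial datum $\bar u(s)$ on a window $[t-T,t]$. The uniformity over $\mathcal H(F)$ that you flag as the main obstacle is precisely what Remark \ref{uniestep}(ii) and the structure-only dependence of the constants in \eqref{avethfeq15} are there to supply, so your two-term decomposition closes the argument as intended.
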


\begin{remark}\rm
Note that $\mathcal H(F)$ is compact
provided $F$ is Birkhoff recurrent, which includes periodic,
quasi-periodic, almost periodic, almost automorphic as special cases;
see e.g. \cite{Sel, sib}.
\end{remark}

\appendix

\section{}

\subsection{Dynamical system}\label{shiftDS}
Let $\left(\mathcal X,\rho\right)$ and
$\left(\mathcal Y,\rho_{1}\right)$ be two complete metric spaces.
Consider $C(\mathbb R,\mathcal X)$ equipped with the distance
\begin{equation*}\label{eqD1}
d(\varphi_{1},\varphi_{2}):=\sum_{k=1}^{\infty}\frac{1}{2^k}
\frac{d_{k}(\varphi_1,\varphi_{2})}{1+d_{k}(\varphi_1,\varphi_{2})},
\end{equation*}
where
$$
d_{k}(\varphi_1,\varphi_{2})
:=\sup\limits_{|t|\le k}\rho(\varphi_1(t),\varphi_{2}(t)),
$$
which generates the compact-open topology on $C(\mathbb R,\mathcal X)$.
Then the space $(C(\mathbb R,\mathcal X),d)$ is a complete metric space
(see e.g. \cite{Sel, Sch72, Sch85, sib}).

\begin{remark}[\cite{sib}]\label{remCh}\rm
Let $\{\varphi_{n}\}_{n=1}^{\infty}, \varphi\in C(\mathbb R,\mathcal X)$.
Then the following statements are equivalent.
\begin{enumerate}
  \item $\lim\limits_{n\to \infty}d(\varphi_{n},\varphi)=0$.
  \item $\lim\limits_{n\to\infty}\max\limits_{|t|\le l}
        \rho(\varphi_{n}(t),\varphi(t))=0$
         for any $l>0$.
  \item There exists a sequence $l_n\to +\infty$ such that
        $\lim\limits_{n\to \infty}\max\limits_{|t|\le l_n}
        \rho(\varphi_{n}(t),\varphi(t))=0$.
\end{enumerate}
\end{remark}

Now we recall some known definitions and results  in dynamical
systems (see e.g. \cite{CV2002, KR2011, Sel} for more details).
Let
$\left(\mathcal P, d_{\mathcal P}\right)$ be a metric space.
\begin{definition}\rm
A {\em nonautonomous dynamical system}
$\left(\sigma,\varphi\right)$ (in short, $\varphi$) consists
of two ingredients:
\begin{enumerate}
  \item A model of the nonautonomous driving system, namely a
  {\em dynamical system} $\sigma$ on $\mathcal P$ with time
      set $T=\mathbb Z$ or $\R$, i.e.
        \begin{enumerate}
          \item [(1)] $\sigma_{0}(\cdot)=Id_{\mathcal P}$,
          \item [(2)] $\sigma_{t+s}(p)=\sigma_{t}
          (\sigma_{s}(p))$ for all $t,s\in T$
          and $p\in \mathcal P$,
          \item [(3)] the mapping
          $(t,p)\mapsto\sigma_{t}(p)$ is continuous.
        \end{enumerate}
        If $T=\R$, $\sigma$ is called {\em flow} on $\mathcal P$;
        if $T=\R^{+}$, $\sigma$ is called {\em semiflow}
        on $\mathcal P$.
  \item A model of the nonautonomous perturbed system, namely
  a {\em cocycle} $\varphi:T^{+}\times\mathcal P\times \mathcal X
        \rightarrow \mathcal X$ satisfies
        \begin{enumerate}
          \item [(1)] $\varphi(0,p,x)=x$ for all
          $(p,x)\in \mathcal P\times\mathcal X$,
          \item [(2)] $\varphi(t+s,p,x)=\varphi(t,\sigma_{s}(p),
          \varphi(s,p,x))$ for all $s,t\in T^{+}$
          and $(p,x)\in\mathcal P\times\mathcal X$,
          \item [(3)] the mapping
          $(t,p,x)\mapsto\varphi(t,p,x)$ is continuous.
        \end{enumerate}
\end{enumerate}
Here $\mathcal P$ is called the {\em base} or {\em parameter space}
and $\mathcal X$ is the {\em fiber} or {\em state space}.
For convenience, we also write $\sigma_{t}(p)$ as $\sigma_{t}p$.
\end{definition}

\begin{definition}\rm
Let $(\sigma,\varphi)$ be a nonautonomous dynamical system with
base space $\mathcal P$ and fiber $\mathcal X$.
The semiflow $\Pi: T^{+}\times\mathcal P\times\mathcal X\rightarrow
\mathcal P\times \mathcal X$ defined by
\[
\Pi(t,(p,x)):=\left(\sigma_{t}p,\varphi(t,p,x)\right)
\]
is called {\em skew product semiflow}.
\end{definition}

\begin{definition}\rm
Define $\mathfrak{X}:=\mathcal P\times\mathcal X$.
We say that a nonempty compact
subset $\mathfrak{A}\subset\mathfrak{X}$ is
{\em global attractor} for skew product semiflow $\Pi$, if
\begin{enumerate}
  \item $\Pi(t,\mathfrak{A})=\mathfrak{A}$ for all
  $t\in\mathbb R^{+}$,
  \item $\lim\limits_{t\rightarrow+\infty}{\rm dist}_{\mathfrak{X}}
  \left(\Pi(t,D),\mathfrak{A}\right)=0$ for every nonempty bounded
  subset $D\subset\mathfrak{X}$,
\end{enumerate}
where ${\rm dist}_{\mathfrak{X}}(A,B)$ is the Hausdorff semi-metric
between sets $A$ and $B$, i.e.
${\rm dist}_{\mathfrak{X}}(A,B):=\sup\limits_{x\in A}d(x,B)$  with
$d(x,B):=\inf\limits_{y\in B}d_{\mathfrak{X}}(x,y)$. Here
$d_{\mathfrak{X}}(x,y)=d_{\mathcal P}(p_{1},p_{2})+\rho(x_{1},x_{2})$
for all $x:=(p_{1},x_{1}),y:=(p_{2},x_{2})\in \mathcal P\times\mathcal X$.
\end{definition}

\begin{definition}\label{gadef}\rm
We say that a compact set $\mathcal A\subset\mathcal X$ is the
{\em uniform attractor (with respect to $p\in\mathcal P$)} of cocycle
$\varphi$ if the following conditions are fulfilled:
\begin{enumerate}
  \item The set $\mathcal A$ is uniformly attracting, i.e.
  \[
  \lim\limits_{t\rightarrow+\infty}\sup\limits_{p\in\mathcal P}
  {\rm dist}_{\mathcal X}\left(\varphi(t,p,B),\mathcal A\right)=0
  \]
  for all bounded subset $B\subset\mathcal X$.
  \item If $\mathcal A_{1}$ is another closed uniformly attracting
  set, then $\mathcal A\subset\mathcal A_{1}$.
\end{enumerate}
\end{definition}

Let us now introduce a shift dynamical system.
For any
$(\tau,\varphi)\in\R\times C(\R,\mathcal X)$, define the mapping
$\sigma:\R\times C(\R,\mathcal X)\rightarrow C(\R,\mathcal X)$
by $\sigma(\tau,\varphi):=\varphi^{\tau}$. Then the triplet
$\left(C(\R,\mathcal X),\R,\sigma\right)$ is a dynamical system which is
called {\em shift dynamical system} or {\em the Bebutov dynamical system}.
Indeed, it is easy to check that $\sigma(0,\varphi)=\varphi$ and
$\sigma(\tau_{1}+\tau_{2},\varphi)
=\sigma(\tau_{2},\sigma(\tau_{1},\varphi))$ for any
$\varphi\in C(\R,\mathcal X)$ and $\tau_{1},\tau_{2}\in\R$. And it can be
proved that the mapping
$\sigma:\R\times C(\R,\mathcal X)\rightarrow C(\R,\mathcal X)$
is continuous; see e.g. \cite{Sel, Sch72, sib}.
Note that $\mathcal H(\varphi)\subset C(\R,\mathcal X)$ is closed and
translation invariant. Then it naturally defines on $\mathcal H(\varphi)$
a shift dynamical system $\left(\mathcal H(\varphi),\R,\sigma\right)$.

Denote by $BUC(\R\times\mathcal X,\mathcal Y)$ (see \cite{CL_2017}) the space of
all continuous functions $f:\R\times\mathcal X\rightarrow\mathcal Y$
satisfying the following conditions:
\begin{enumerate}
  \item $f$ is bounded on every bounded subset from $\R\times\mathcal X$;
  \item $f$ is continuous in $t\in\R$ uniformly with respect to $x$
  on each bounded subset $Q\subset\mathcal X$.
\end{enumerate}
We endow $BUC(\R\times\mathcal X,\mathcal Y)$ with the following distance
\begin{equation}\label{dBUC}
d(f,g):=\sum_{k=1}^{\infty}\frac{1}{2^{k}}\frac{d_{k}(f,g)}{1+d_{k}(f,g)},
\end{equation}
where $d_{k}(f,g):=\sup\limits_{|t|\leq k,x\in Q_{k}}\rho_{1}(f(t,x),g(t,x))$.
Here $Q_{k}\subset\mathcal X$ is bounded,
$Q_{k}\subset Q_{k+1}$ and $\cup_{k\in\N}Q_{k}=\mathcal X$.
We notice that $d$ generates the topology of uniform convergence
on  bounded subsets of $\R\times\mathcal X$
and $\left(BUC(\R\times\mathcal X,\mathcal Y),d\right)$
is a complete metric space.

Let $f\in BUC(\R\times\mathcal X,\mathcal Y)$ and $\tau\in\R$.
Note that $BUC(\R\times\mathcal X,\mathcal Y)$ is invariant with respect to
translations. Define a mapping
$\sigma:\R\times BUC(\R\times\mathcal X,\mathcal Y)
\rightarrow BUC(\R\times\mathcal X,\mathcal Y)$, $(\tau,f)\mapsto f^{\tau}$.
Then it can be proved that the triplet
$\left(BUC(\R\times\mathcal X,\mathcal Y),\R,\sigma\right)$
is a dynamical system.
Given $f\in BUC(\R\times\mathcal X,\mathcal Y)$,
$\mathcal H(f)\subset BUC(\R\times\mathcal X,\mathcal Y)$ is closed
and translation invariant. Consequently, it naturally defines
on $\mathcal H(f)$ a shift dynamical system
$\left(\mathcal H(f),\R,\sigma\right)$.

We write $BC(\mathcal X,\mathcal Y)$ to mean the space of
all continuous functions $f:\mathcal X\rightarrow\mathcal Y$
which are bounded on every bounded subset of  $\mathcal X$
and equipped with the following metric
\[
d(f,g):=\sum_{k=1}^{\infty}\frac{1}{2^{k}}
\frac{d_{k}(f,g)}{1+d_{k}(f,g)},
\]
where $d_{k}(f,g):=\sup\limits_{x\in Q_{k}}\rho_{1}(f(x),g(x))$.
Note that $\left(BC(\mathcal X,\mathcal Y),d\right)$
is a complete metric space. For any
$f\in BUC(\R\times\mathcal X,\mathcal Y)$, define the mapping
$\mathcal F:\R\rightarrow BC(\mathcal X,\mathcal Y)$ by
$\mathcal F(t):=f(t,\cdot):\mathcal X\rightarrow\mathcal Y$.
Clearly, $\mathcal F\in C(\R,BC(\mathcal X,\mathcal Y))$.

\begin{definition}\label{defF1} \rm
\begin{enumerate}
\item We say that a function $\varphi\in C(\R,\mathcal X)$
  {\em possesses the property A} if the motion $\sigma(\cdot,\varphi)$
  through $\varphi$ with respect to the Bebutov dynamical system
  $(C(\R\times\mathcal X),\R,\sigma)$ possesses the property A.
\item Similarly, we say that $f\in BUC(\R\times\mathcal X,\mathcal Y)$
  {\em possesses the property A in $t\in\R$ uniformly with respect to
  $x$ on each bounded subset $Q\subset\mathcal X$}, if the motion
  $\sigma(\cdot,f):\R\rightarrow BUC(\R\times\mathcal X,\mathcal Y)$
  through $f$ with respect to the Bebutov dynamical system
  $\left(BUC(\R\times\mathcal X,\mathcal Y),\R,\sigma\right)$
  possesses the property A.
\end{enumerate}
Here the property A may be stationary, periodic, Bohr/Levitan almost
periodic, etc.
\end{definition}

\subsection{Proof of Theorem \ref{wellD}}\label{eupf}
\begin{proof}
We set $f^{n}(t,u):=P_{n}f(t,u)$ and $g^{n}(t,u):=P_{n}g(t,u)$.
Let $\{w_{k},k\in\mathbb N\}$ be an orthonormal basis of $U$ and set
$$
W^{n}(\cdot):=\sum\limits_{k=1}^{n}\langle W(\cdot),w_{i}\rangle_{U}w_{i}.
$$
Let $u^n(t),t\geq s$ be the solutions to
the following  finite dimensional equations
\begin{equation}\label{GFeq}
  \left\{
   \begin{aligned}
   \ \d u^{n}(t)=&
   \left[(1+i\alpha )\Delta u^{n}(t)
   -(\gamma(t)+i\beta)P_{n}|u^{n}(t)|^{2}u^{n}(t)
    +P_nf(t,u^{n}(t))\right]\d t\\
   &
    +P_ng(t,u^{n}(t))\d W^{n}(t)\\
  \ u^{n}(s)=&P_{n}\zeta_{s}.
   \end{aligned}
   \right.
\end{equation}
It follows from \cite[Theorem 3.1.1]{LR2015} that there exists a
unique solution $u^{n}(t),t\geq s$ to \eqref{GFeq}
for any $n\in\mathbb N$.
Similar to the proof of \eqref{uniest1}, we have
\begin{align}\label{uniestGF}
&
\mathbb E\left(\sup_{s\leq t\leq s+T}\|u^{n}(t)\|^{2p}\right)
+\mathbb E\int_{s}^{s+T}\|u^n(t)\|^{2p-2}\|u^{n}(t)\|_{1}^{2}\d t\\\nonumber
&
+\mathbb E\int_{s}^{s+T}\|u^n(t)\|^{2p-2}\|u^{n}(t)\|_{L^{4}(\T^{d})}^{4}\d t
\leq C_{T}(1+\mathbb E\|\zeta_{s}\|^{2p})
\end{align}
for all $T>0$, where $p\geq1$, and $C_{T}$ is independent of $n$.

Then there exists a subsequence of $\{u^{n}\}$,
which we still denote by $\{u^{n}\}$, such that
\begin{enumerate}
  \item[(1)]
  $u^{n}\rightarrow u$ weakly in
  $L^{2}([s,s+T]\times\Omega,\d t\otimes\mathbb P;
  L^{2}(\T^{d}))$, $L^{2}([s,s+T]\times\Omega,\d t\otimes
  \mathbb P; H_0^{1})$ and $L^{4}([s,s+T]\times\Omega,\d t\otimes
  \mathbb P;L^{4}(\T^{d}))$.
  \item[(2)]
  $|u^{n}(\cdot)|^{2}u^{n}(\cdot)\rightarrow Y^{1}(\cdot)$
  weakly in $L^{\frac{4}{3}}\left([s,s+T]\times\Omega,\d t
  \otimes\mathbb P;L^{\frac{4}{3}}(\T^{d})\right)$.
  \item[(3)]
  $f^n(\cdot,u^{n}(\cdot))\rightarrow Y^{2}(\cdot)$ weakly in
  $L^{2}\left([s,s+T]\times\Omega,\d t \otimes\mathbb P;
  L^{2}(\T^{d})\right)$.
  \item[(4)]
  $g^{n}(\cdot,u^{n}(\cdot))\rightarrow Z(\cdot)$ weakly in
  $L^{2}\left([s,s+T]\times\Omega,\d t\otimes\mathbb P;L_{2}(U,
  L^{2}(\T^{d}))\right)$ and hence
  \[
  \int_{s}^{t}g^{n}(\sigma,u^{n}(\sigma))\d W^{n}(\sigma)
  \rightarrow\int_{s}^{t}Z(\sigma)\d W(\sigma)
  \]
  weakly* in $L^{\infty}\left([s,s+T],\d t;L^{2}(\Omega,\mathbb P;
  L^{2}(\T^{d}))\right)$.
\end{enumerate}
For all $v\in \cup_{n\geq1}H_{n}$,
$\phi\in L^{\infty}([s,s+T]\times\Omega,\d t\otimes\mathbb P;\R)$,
it follows from Fubini's theorem that
\begin{align*}
&
\mathbb E\left(\int_{s}^{s+T}\langle u(t),\phi(t)v\rangle
\d t\right)\\
&
=\lim\limits_{n\rightarrow\infty}\mathbb E\left(\int_{s}^{s+T}
\langle u^{n}(t),\phi(t)v\rangle\d t\right)\\
&
=\lim\limits_{n\rightarrow\infty}\mathbb E\Bigg(\int_{s}^{s+T}
\left\langle P_{n}\zeta_{s}+\int_{s}^{t}\left[(1+i\alpha)\Delta
u^{n}(\sigma)-(\gamma(\sigma)+i\beta)P_{n}|u^{n}(\sigma)|^{2}u^{n}(\sigma)
\right]\d\sigma,\phi(t)v\right\rangle\d t\\
&\qquad
+\left\langle\int_{s}^{t}f^{n}(\sigma,u^{n}(\sigma))\d\sigma
+\int_{s}^{t}g^{n}(\sigma,u^{n}(\sigma))\d W^{n}(\sigma),
\phi(t)v\right\rangle\d t\Bigg)\\
&
=\mathbb E\left\langle \zeta_{s},\int_{s}^{s+T}\phi(t)v
\d t\right\rangle
+\mathbb E\left(\int_{s}^{s+T}\left\langle\int_{s}^{t}
(1+i\alpha)u(\sigma)\d\sigma,\phi(t)\Delta v\right\rangle\d t\right)
\\
&\quad
+\mathbb E\left(\int_{s}^{s+T}\left\langle\int_{s}^{t}
\left(-(\gamma(\sigma)+i\beta)Y^{1}(\sigma)+Y^{2}(\sigma)\right)\d\sigma,
\phi(t)v\right\rangle\d t\right)\\
&\quad
+\mathbb E\left(\int_{s}^{s+T}\left\langle\int_{s}^{t}Z(\sigma)
\d W(\sigma),\phi(t)v\right\rangle\d t\right).
\end{align*}
Therefore, we have
\[
u(t)=\zeta_{s}+\int_{s}^{t}\left((1+i\alpha)\Delta u(\sigma)
-(\gamma(\sigma)+i\beta)Y^{1}(\sigma)+Y^{2}(\sigma)\right)\d t
+\int_{s}^{t}Z(\sigma)\d W(\sigma) \quad
\d t\otimes\mathbb P-{\rm a.s.}
\]
Taking $V=L^4(\T^d)\cap H_0^1$ and $H:=L^2(\T^d)$ in \cite[Theorem 4.5]{LR2015},
it follows that $u$ is a continuous
$L^2(\T^d)$-valued $\mathcal F_{t}$-adapted process.

Now we prove that $-(\gamma(\cdot)+i\beta)Y^{1}+Y^{2}=-(\gamma(\cdot)+i\beta)|u|^{2}u+f(\cdot,u)$ and
$Z=g(\cdot,u)$  $\d t\otimes\mathbb P$-a.s.
Note that for any nonnegative $\psi\in L^{\infty}([s,s+T],\d t;\R)$ we have
\begin{align*}
&
\mathbb E\left(\int_{s}^{s+T}\psi(t)\|u(t)\|^{2}\d t\right)\\
&
=\lim\limits_{n\rightarrow\infty}\mathbb E\left(\int_{s}^{s+T}\left
\langle\psi(t)u(t),u^{n}(t)\right\rangle\d t\right)\\
&
\leq\left(\mathbb E\int_{s}^{s+T}\psi(t)\|u(t)\|^{2}\d t
\right)^{\frac{1}{2}}\liminf\limits_{n\rightarrow\infty}\left(
\mathbb E\int_{s}^{s+T}\psi(t)\|u^{n}(t)\|^{2}\d t
\right)^{\frac{1}{2}}<\infty.
\end{align*}
Then
\begin{equation}\label{exeq1}
\mathbb E\left(\int_{s}^{s+T}\psi(t)\|u(t)\|^{2}\d t\right)
\leq\liminf\limits_{n\rightarrow\infty}
\mathbb E\left(\int_{s}^{s+T}\psi(t)\|u^{n}(t)\|^{2}\d t\right).
\end{equation}
According to the product rule and It\^o's formula we get
\begin{align}\label{exeq2}
&
\mathbb E\left(e^{-c(t-s)}\|u(t)\|^{2}\right)
-\mathbb E\|u(s)\|^{2}\\\nonumber
&
=\mathbb E\Bigg(\int_{s}^{t}e^{-c(\sigma-s)}\bigg(2\left\langle
(1+i\alpha)\Delta u(\sigma)-(\gamma(\sigma)+i\beta)Y^{1}(\sigma)+Y^{2}(\sigma),
u(\sigma)\right\rangle\\\nonumber
&\qquad
+\|Z(\sigma)\|_{L_{2}(U,L^{2}(\T^{d}))}^{2}
-c\|u(\sigma)\|^{2}\bigg)\d\sigma\Bigg)
\end{align}
for any constant $c$. Let $c=2L_{f}+L_{g}^{2}$ and
\[
K_{i}:=L^{2}\left([s,s+T]\times\Omega,\d t\otimes\mathbb P;
W_{i}\right),\quad
K_{3}:=L^{4}\left([s,s+T]\times\Omega,\d t\otimes\mathbb P;
L^{4}(\T^{d})\right),
\]
where $i=1,2$, $W_{1}:=L^{2}(\T^{d})$ and $W_{2}:=H_0^{1}$.
Note that $\la P_nv,w\ra=\la v,w\ra$ for all $v\in L^2(\T^d)$ and $w\in H_n$.
Hence, by (H$_f^1$) and (H$_g^1$), for any $\phi\in K_{1}\cap K_{2}\cap K_{3}$ we obtain
\begin{align*}
&
\mathbb E\left(e^{-c(t-s)}\|u^{n}(t)\|^{2}\right)
-\mathbb E\|u^{n}(s)\|^{2}\\\nonumber
&
\leq\mathbb E\Bigg(\int_{s}^{t}e^{-c(\sigma-s)}\bigg(
2\left\langle(1+i\alpha)\Delta (u^{n}(\sigma)-\phi(\sigma)),
u^{n}(\sigma)-\phi(\sigma)\right\rangle\\\nonumber
&\qquad
-2\left\langle(\gamma(\sigma)+i\beta)\left(|u^{n}(\sigma)|^{2}u^{n}(\sigma)
-|\phi(\sigma)|^{2}\phi(\sigma)\right),
u^{n}(\sigma)-\phi(\sigma)\right\rangle\\\nonumber
&\qquad
+2\left\langle f(\sigma,u^{n}(\sigma))
-f(\sigma,\phi(\sigma)),u^{n}(\sigma)-\phi(\sigma)
\right\rangle\\\nonumber
&\qquad
+\|g(\sigma,u^{n}(\sigma))-g(\sigma,\phi(\sigma))
\|_{L_{2}(U,L^{2}(\T^{d}))}^{2}
-c\|u^{n}(\sigma)-\phi(\sigma)\|^{2}\bigg)\d\sigma\Bigg)\\\nonumber
&\quad
+\mathbb E\Bigg(\int_{s}^{t}e^{-c(\sigma-s)}\bigg(
2\left\langle(1+i\alpha)\Delta \phi(\sigma),
u^{n}(\sigma)\right\rangle
+2\left\langle(1+i\alpha)\Delta (u^{n}(\sigma)-\phi(\sigma)),
\phi(\sigma)\right\rangle\\\nonumber
&\qquad
-2\left\langle(\gamma(\sigma)+i\beta)|\phi(\sigma)|^{2}\phi(\sigma),
u^{n}(\sigma)\right\rangle
-2\left\langle(\gamma(\sigma)+i\beta)\left(|u^{n}(\sigma)|^{2}u^{n}(\sigma)
-|\phi(\sigma)|^{2}\phi(\sigma)\right),\phi(\sigma)\right\rangle
\\\nonumber
&\qquad
+2\left\langle f(\sigma,\phi(\sigma)),u^{n}(\sigma)\right\rangle
+2\left\langle f(\sigma,u^{n}(\sigma))
-f(\sigma,\phi(\sigma)),\phi(\sigma)\right\rangle
-\|g(\sigma,\phi(\sigma))\|_{L_{2}(U,L^{2}(\T^{d}))}^{2}
\\\nonumber
&\qquad
+2\left\langle g(\sigma,u^{n}(\sigma)),g(\sigma,\phi(\sigma))
\right\rangle_{L_{2}(U,L^{2}(\T^{d}))}
-2c\langle u^{n}(\sigma),\phi(\sigma)\rangle
+c\|\phi(\sigma)\|^{2}
\bigg)\d\sigma\Bigg)\\\nonumber
&
\leq\mathbb E\Bigg(\int_{s}^{t}e^{-c(\sigma-s)}\bigg(
2\left\langle(1+i\alpha)\Delta \phi(\sigma),
u^{n}(\sigma)\right\rangle
+2\left\langle(1+i\alpha)\Delta (u^{n}(\sigma)-\phi(\sigma)),
\phi(\sigma)\right\rangle\\\nonumber
&\qquad
-2\left\langle(\gamma(\sigma)+i\beta)|\phi(\sigma)|^{2}\phi(\sigma),
u^{n}(\sigma)\right\rangle
-2\left\langle(\gamma(\sigma)+i\beta)\left(|u^{n}(\sigma)|^{2}u^{n}(\sigma)
-|\phi(\sigma)|^{2}\phi(\sigma)\right),\phi(\sigma)\right\rangle
\\\nonumber
&\qquad
+2\left\langle f(\sigma,\phi(\sigma)),u^{n}(\sigma)\right\rangle
+2\left\langle f(\sigma,u^{n}(\sigma))
-f(\sigma,\phi(\sigma)),\phi(\sigma)\right\rangle
-\|g(\sigma,\phi(\sigma))\|_{L_{2}(U,L^{2}(\T^{d}))}^{2}
\\\nonumber
&\qquad
+2\left\langle g(\sigma,u^{n}(\sigma)),g(\sigma,\phi(\sigma))
\right\rangle_{L_{2}(U,L^{2}(\T^{d}))}
-2c\langle u^{n}(\sigma),\phi(\sigma)\rangle
+c\|\phi(\sigma)\|^{2}
\bigg)\d\sigma\Bigg).\\\nonumber
\end{align*}
Letting $n\rightarrow\infty$, in view of \eqref{exeq1} we have
\begin{align*}
&
\mathbb E\left(\int_{s}^{s+T}\psi(t)\left(e^{-c(t-s)}\|u(t)\|^{2}
-\|u(s)\|^{2}\right)\d t\right)\\\nonumber
&
\leq\mathbb E\Bigg(\int_{s}^{s+T}\psi(t)\bigg(\int_{s}^{t} e^{-c(\sigma-s)}\Big[
2\left\langle(1+i\alpha)\Delta \phi(\sigma),u(\sigma)\right\rangle
+2\left\langle(1+i\alpha)\Delta (u(\sigma)-\phi(\sigma)),
\phi(\sigma)\right\rangle\\\nonumber
&\qquad
-2\left\langle(\gamma(\sigma)+i\beta)|\phi(\sigma)|^{2}\phi(\sigma),
u(\sigma)\right\rangle
-2\left\langle(\gamma(\sigma)+i\beta)\left(Y^{1}(\sigma)
-|\phi(\sigma)|^{2}\phi(\sigma)\right),\phi(\sigma)\right\rangle
\\\nonumber
&\qquad
+2\left\langle f(\sigma,\phi(\sigma)),u(\sigma)\right\rangle
+2\left\langle Y^{2}(\sigma)-f(\sigma,\phi(\sigma)),
\phi(\sigma)\right\rangle
-\|g(\sigma,\phi(\sigma))\|_{L_{2}(U,L^{2}(\T^{d}))}^{2}\\\nonumber
&\qquad
+2\left\langle Z(\sigma),g(\sigma,\phi(\sigma))
\right\rangle_{L_{2}(U,L^{2}(\T^{d}))}
-2c\langle u(\sigma),\phi(\sigma)\rangle
+c\|\phi(\sigma)\|^{2}
\Big]\d\sigma\bigg)\d t\Bigg).
\end{align*}
Combining this with \eqref{exeq2}, we get
\begin{align}\label{exeq10}
&
\mathbb E\Bigg(\int_{s}^{s+T}\psi(t)\bigg(\int_{s}^{t} e^{-c(\sigma-s)}\Big[
-2\left\langle (\gamma(\sigma)+i\beta)\left(Y^{1}(\sigma)-|\phi(\sigma)|^{2}
\phi(\sigma)\right),u(\sigma)-\phi(\sigma)\right\rangle\\\nonumber
&\qquad
+2\left\langle Y^{2}(\sigma)-f(\sigma,\phi(\sigma)),
u(\sigma)-\phi(\sigma)\right\rangle\\\nonumber
&\qquad
+\|g(\sigma,\phi(\sigma))-Z(\sigma)
\|_{L_{2}(U,L^{2}(\T^{d}))}^{2}
-c\|u(\sigma)-\phi(\sigma)\|^{2}\Big]\d\sigma\bigg)\d t\Bigg)
\leq0.
\end{align}
Letting $\phi=u$ in \eqref{exeq10}, we have $Z=g(\cdot,u)$
$\d t\otimes\mathbb P$-a.s.
Then letting $\phi=u-\veps \widetilde{\phi}v$ for $\veps>0$
, $v\in H_0^1$ and $\widetilde{\phi}\in L^{\infty}([s,s+T]\times\Omega,\d t\times
\mathbb P;\R)$ we have
\begin{align*}
&
\mathbb E\Bigg(\int_{s}^{s+T}\psi(t)\bigg(\int_{s}^{t} e^{-c(\sigma-s)}\Big[
2\left\langle Y^{2}(\sigma)
-f(\sigma,u(\sigma)-\veps\widetilde{\phi}(\sigma)v),
\veps\widetilde{\phi}(\sigma)v\right\rangle-c\veps^{2}
\|\widetilde{\phi}v\|^{2}\\
&\quad
-2\left\langle (\gamma(\sigma)+i\beta)\left(Y^{1}(\sigma)
-|u(\sigma)-\veps\widetilde{\phi}(\sigma)v|^{2}
(u(\sigma)-\veps\widetilde{\phi}(\sigma)v)\right),
\veps\widetilde{\phi}(\sigma)v\right\rangle\Big]\d\sigma\bigg)\d t\Bigg)\leq0.
\end{align*}
Dividing both sides by $\veps$ and letting $\veps\rightarrow0$,
in view of Lebesgue's dominated convergence theorem, we obtain
\begin{align*}
&
\mathbb E\Bigg(\int_{s}^{s+T}\psi(t)\bigg(\int_{s}^{t} e^{-c(\sigma-s)}\Big[
-\left\langle (\gamma(\sigma)+i\beta)\left(Y^{1}(\sigma)
-|u(\sigma)|^{2}u(\sigma)\right),\widetilde{\phi}(\sigma)v
\right\rangle\\\nonumber
&\quad
+\left\langle Y^{2}(\sigma)
-f(\sigma,u(\sigma)),\widetilde{\phi}(\sigma)v\right\rangle
\Big]\d\sigma\bigg)\d t\Bigg)\leq0.
\end{align*}
Therefore, we have $-(\gamma(\cdot)+i\beta)Y^{1}+Y^{2}=-(1+i\beta)|u|^{2}u+f(\cdot,u)$
$\d t\otimes\mathbb P$-a.s.

Now we show the uniqueness of solutions. Suppose that there exist two solutions $u_{1}(t):=u_1(t,s,\zeta_s)$ and $u_{2}(t):=u_2(t,s,\zeta_s)$ for all $t\geq s$, then
\begin{align*}
&
\mathbb E\left({\rm{e}}^{(2\lambda_{*}-2L_{f}-L_{g}^{2})(t-s)}
\|u_{1}(t)-u_{2}(t)\|^{2}\right)\\
&
=\int_{s}^{t}(2\lambda_{*}-2L_{f}-L_{g}^{2})
{\rm{e}}^{(2\lambda_{*}-2L_{f}-L_{g}^{2})(\sigma-s)}
\mathbb E\|u_{1}(\sigma)-u_{2}(\sigma)\|^{2}\d\sigma \\
& \quad
+\mathbb E\int^{t}_{s}{\rm{e}}^{(2\lambda_{*}-2L_{f}
-L_{g}^{2})(\sigma-s)}\Big(2\langle (1+i\alpha)
\Delta\left(u_{1}(\sigma)-u_{2}(\sigma)\right),
u_{1}(\sigma)-u_{2}(\sigma)\rangle\\
&\qquad
-2\langle(\gamma(\sigma)+i\beta)\left(|u_{1}(\sigma)|^{2}
u_{1}(\sigma)
-|u_{2}(\sigma)|^{2}u_{2}(\sigma)\right),
u_{1}(\sigma)-u_{2}(\sigma)\rangle\\
&\qquad
+2\langle f(\sigma,u_{1}(\sigma))
-f(\sigma,u_{2}(\sigma)),
u_{1}(\sigma)-u_{2}(\sigma)\rangle\\
&\qquad
+\|g(\sigma,u_{1}(\sigma))
-g(\sigma,u_{2}(\sigma))\|^{2}_{L_{2}(U,L^{2}(\T^{d}))}\Big)\d\sigma\leq0.
\end{align*}
\end{proof}

%
%

\section*{Acknowledgements}
The first author would like to acknowledge the
support from China Scholarship Council and
warm hospitality of Bielefeld University.
The second author was supported by NSFC Grants
11871132, 11925102,
Dalian High-level Talent Innovation Project (Grant 2020RD09),
and Xinghai Jieqing fund from Dalian University of
Technology.
The third author was supported by the Deutsche Forschungsgemeinschaft
(DFG, German Research Foundation) - SFB 1283/2 2021 - 317210226.


\begin{thebibliography}{100}

\bibitem{AK2002}
I. S. Aranson and L. Kramer,
The world of the complex Ginzburg-Landau equation.
{\em Rev. Modern Phys.} {\bf74} (2002), 99--143.

\bibitem{BK2004}
V. Bakhtin and Y. Kifer,
Diffusion approximation for slow motion in fully coupled averaging.
{\em Probab. Theory Related Fields} {\bf129} (2004), 157--181.


\bibitem{BM1961}
N. N. Bogolyubov and Y. A. Mitropolsky,
{\em Asymptotic Methods in the Theory of Non-linear Oscillations.}
Translated from the second revised Russian edition.
International Monographs on Advanced Mathematics and
Physics Hindustan Publishing Corp., Delhi, Gordon and
Breach Science Publishers, New York 1961, x+537 pp.



\bibitem{Cerr2009}
S. Cerrai,
A Khasminskii type averaging principle for stochastic reaction--diffusion equations.
{\em Ann. Appl. Probab.} {\bf19} (2009), 899--948.


\bibitem{CF2009}
S. Cerrai and M. Freidlin,
Averaging principle for a class of stochastic reaction-diffusion equations.
{\em Probab. Theory Related Fields} {\bf144} (2009), 137--177.


\bibitem{CL2017}
S. Cerrai and A. Lunardi,
Averaging principle for nonautonomous slow-fast systems of
stochastic reaction-diffusion equations: the almost periodic case.
{\em SIAM J. Math. Anal.} {\bf49} (2017), 2843--2884.


\bibitem{CL_2017}
D. Cheban and Z. Liu,
Periodic, quasi-periodic, almost periodic, almost automorphic,
Birkhoff recurrent and Poisson stable solutions for
stochastic differential equations.
{\em J. Differential Equations} {\bf269} (2020), 3652--3685.




\bibitem{CL2020}
D. Cheban and Z. Liu,
Averaging principle on infinite intervals for stochastic ordinary
differential equations.
{\em Electron. Res. Arch.} {\bf29} (2021), 2791--2817.


\bibitem{CML2020}
M. Cheng and Z. Liu,
Periodic, almost periodic and almost automorphic solutions for
SPDEs with monotone coefficients.
{\em Discrete Contin. Dyn. Syst. Ser. B}
{\bf26} (2021), 6425--6462.

\bibitem{CLAM2021}
M. Cheng and Z. Liu,
The second Bogolyubov theorem and global averaging principle
for SPDEs with monotone coefficients.
arXiv: 2109.00371v2.




\bibitem{CV2002}
V. V. Chepyzhov and M. I. Vishik,
{\em Attractors for Equations of Mathematical Physics.}
American Mathematical Society Colloquium Publications, 49.
American Mathematical Society, Providence, RI, 2002, xii+363 pp.


\bibitem{DT1995}
G. Da Prato and C. Tudor,
Periodic and almost periodic solutions for semilinear
stochastic equations.
{\em Stochastic Anal. Appl.} {\bf13} (1995), 13--33.





\bibitem{DW2014}
J. Duan and W. Wang,
{\em Effective Dynamics of Stochastic Partial Differential Equations.}
Elsevier Insights. Elsevier, Amsterdam, 2014, xii+270 pp.



\bibitem{FW2006}
M. I. Freidlin and A. D. Wentzell,
Long-time behavior of weakly coupled oscillators.
{\em J. Stat. Phys.} {\bf123} (2006), 1311--1337.


\bibitem{FW2012}
M. I. Freidlin and A. D. Wentzell,
{\em Random Perturbations of Dynamical Systems.}
Translated from the 1979 Russian original by Joseph Sz\"ucs.
Third edition. Grundlehren der Mathematischen Wissenschaften
[Fundamental Principles of Mathematical Sciences], 260.
Springer, Heidelberg, 2012, xxviii+458 pp.


\bibitem{FL}
M. Fu and Z. Liu,
Square-mean almost automorphic solutions for some
stochastic differential equations.
{\em Proc. Amer. Math. Soc.} {\bf138} (2010), 3689--3701.


\bibitem{Gao2021}
P. Gao,
Averaging principle for complex Ginzburg-Landau equation
perturbated by mixing random forces.
{\em SIAM J. Math. Anal.} {\bf53} (2021), 32--61.

\bibitem{Gao2022}
P. Gao,
Averaging principles for stochastic 2D Navier-Stokes equations.
{\em J. Stat. Phys.} {\bf186} (2022), Paper No. 28, 29 pp.

\bibitem{HL2020}
M. Hairer and X. Li,
Averaging dynamics driven by fractional Brownian motion.
{\em Ann. Probab.} {\bf48} (2020), 1826--1860.

\bibitem{HV1990}
J. K. Hale and S. M. Verduyn Lunel,
Averaging in infinite dimensions.
{\em J. Integral Equations Appl.} {\bf2} (1990), 463--494.


\bibitem{HLL2021}
W. Hong, S. Li and W. Liu,
Freidlin-Wentzell type large deviation principle for multiscale locally monotone SPDEs.
{\em SIAM J. Math. Anal.} {\bf53} (2021), 6517--6561.

\bibitem{HKM2015}
G. Huang, S. Kuksin and A. Maiocchi,
Time-averaging for weakly nonlinear CGL equations with arbitrary potentials.
{\em Hamiltonian partial differential equations and applications},
323--349, Fields Inst. Commun., 75, Fields Inst. Res. Math. Sci.,
Toronto, ON, 2015.


\bibitem{Ily1996}
A. A. Ilyin, Averaging of dissipative dynamical systems with rapidly
oscillating right-hand sides. (in Russian) {\em Mat. Sb.} {\bf187}
(1996), 15--58; translation in {\em Sb. Math.} {\bf187} (1996),
635--677.


\bibitem{Ily1998}
A. A. Ilyin, Global averaging of dissipative dynamical systems.
{\em Rend. Accad. Naz. Sci. XL Mem. Mat. Appl. (5)}
{\bf 22} (1998), 165--191.


\bibitem{KMF2015}
M. Kamenskii, O. Mellah and P. Raynaud de Fitte,
Weak averaging of semilinear stochastic differential
equations with almost periodic coefficients.
{\em J. Math. Anal. Appl.}
{\bf427} (2015),  336--364.


\bibitem{Khas1968}
R. Khasminskii, On the principle of averaging the It\^o's stochastic
differential equations.
{\em Kybernetika (Prague)} {\bf4} (1968), 260--279. (in Russian)

\bibitem{Kifer2004}
Y. Kifer,
Some recent advances in averaging.
{\em Modern dynamical systems and applications, 385--403,
Cambridge Univ. Press, Cambridge}, 2004.

\bibitem{KR2011}
P. E. Kloeden and M. Rasmussen,
{\em Nonautonomous Dynamical Systems.}
Mathematical Surveys and Monographs, 176.
American Mathematical Society, Providence, RI, 2011, viii+264 pp.



\bibitem{KB1943}
N. Krylov and N. N. Bogolyubov,
{\em Introduction to Non-Linear Mechanics.}
Annals of Mathematics Studies, no. 11. Princeton University Press,
Princeton, N. J., 1943, iii+105 pp.


\bibitem{Kuk2013}
S. Kuksin,
Weakly nonlinear stochastic CGL equations.
{\em Ann. Inst. Henri Poincar\'e Probab. Stat.}
{\bf49} (2013), 1033--1056.


\bibitem{LR2015}
W. Liu and M. R\"ockner,
{\em Stochastic Partial Differential Equations: An Introduction.}
Universitext. Springer, Cham, 2015. vi+266 pp.

\bibitem{LRSX2020}
W. Liu, M. R\"ockner, X. Sun and Y. Xie,
Averaging principle for slow-fast stochastic differential equations
with time dependent locally Lipschitz coefficients.
{\em J. Differential Equations} \textbf{268} (2020), 2910--2948.

\bibitem{LL2020}
X. Liu and Z. Liu,
Poisson stable solutions for stochastic differential equations
with L\'evy noise.
{\em Acta Math. Sin. (Engl. Ser.)}
{\bf38} (2022), 22--54.

\bibitem{MSV1991}
B. Maslowski, J. Seidler and I. Vrko\v{c},
An averaging principle for stochastic evolution equations. II.
{\em Math. Bohem.} {\bf116} (1991), 191--224.


\bibitem{RX2021}
M. R\"ockner and L. Xie,
Averaging principle and normal deviations for multiscale
stochastic systems.
{\em Comm. Math. Phys.} {\bf383} (2021), 1889--1937.


\bibitem{SVM2007}
J. A. Sanders, F. Verhulst and J. Murdock,
{\em Averaging Methods in Nonlinear Dynamical Systems}. Second edition.
Applied Mathematical Sciences, 59. Springer, New York, 2007. xxii+431 pp.

\bibitem{Sel}
G. R. Sell,
{\em Topological Dynamics and Ordinary Differential Equations.}
Van Nostrand-Reinhold, 1971.





\bibitem{Sch72}
B. A. Shcherbakov,
{\em Topologic Dynamics and Poisson Stability of Solutions of
Differential Equations}.
\c{S}tiin\c{t}a, Chi\c{s}in\u{a}u, 1972, 231 pp. (in Russian)


\bibitem{Sch85}
B. A. Shcherbakov,
{\em Poisson Stability of Motions of Dynamical Systems and Solutions
of Differential Equations}.
\c{S}tiin\c{t}a, Chi\c{s}in\u{a}u, 1985, 147 pp. (in Russian)

\bibitem{sib}
K. S. Sibirsky,
{\em Introduction to Topological Dynamics.\/}
Kishinev, RIA AN MSSR, 1970, 144 pp. (in Russian) [English
translationn: Introduction to Topological Dynamics. Noordhoff,
Leyden, 1975]


\bibitem{Skor1989}
A. V. Skorokhod,
{\em Asymptotic Methods in the Theory of Stochastic Differential Equations.}
Translated from the Russian by H. H. McFaden.
Translations of Mathematical Monographs, 78.
American Mathematical Society, Providence, RI, 1989, xvi+339 pp.


\bibitem{SXX2021}
X. Sun, L. Xie, Y. Xie,
Averaging principle for slow-fast stochastic partial differential
equations with H\"older continuous coefficients.
{\em J. Differential Equations} \textbf{270} (2021), 476--504.


\bibitem{Vere1990}
A. Yu. Veretennikov,
On an averaging principle for systems of stochastic differential equations.
{\em Mat. Sb.} {\bf181} (1990), 256--268. (in Russian)
[English translation: {\em Math. USSR-Sb.} {\bf69} (1991), 271--284]

\bibitem{Vere1999}
A. Yu. Veretennikov,
On large deviations in the averaging principle for SDEs with a ``full dependence''.
{\em Ann. Probab.} {\bf27} (1999), 284--296.


\bibitem{Vrko1995}
I. Vrkoc,
Weak averaging of stochastic evolution equations.
{\em Math. Bohem.} {\bf120} (1995), 91--111.


\bibitem{WR2012}
W. Wang and A. J. Roberts,
Average and deviation for slow-fast stochastic partial differential equations.
{\em J. Differential Equations} {\bf253} (2012), 1265--1286.


\bibitem{Zel2006}
S. Zelik,
Global averaging and parametric resonances in damped semilinear
wave equations.
{\em Proc. Roy. Soc. Edinburgh Sect. A} {\bf136} (2006), 1053--1097.

\end{thebibliography}
\end{document}